\documentclass[11pt]{amsart}
\usepackage{geometry}                
\geometry{letterpaper}                   
\usepackage{graphicx}
\usepackage{amssymb}
\usepackage{epstopdf}
\DeclareGraphicsRule{.tif}{png}{.png}{`convert #1 `dirname #1`/`basename #1 .tif`.png}

\newtheorem{theorem}{Theorem}[section]
\newtheorem{lemma}{Lemma}[section]

\newtheorem{proposition}{Proposition}[section]
\newtheorem{conjecture}{Conjecture}[section]
\newtheorem{definition}{Definition}[section]

\numberwithin{equation}{section}

\def\Z{\mathbb Z}

\def\R{\mathbb R}

\def\om{\omega}
\def\d{\partial}

\def\e{\epsilon}
\def\a{\alpha}
\def\b{\beta}
\def\g{\gamma}

\title[Doodles and blobs on a ruled page]{Doodles and blobs on a ruled page: convex  quasi-envelops of traversing flows on surfaces}

 \author[G.~Katz]{Gabriel Katz}
\address{MIT, Department of Mathematics, 77 Massachusetts Ave., Cambridge, MA 02139, U.S.A.}
\email{gabkatz@gmail.com}

\begin{document}

\maketitle 

\begin{abstract}
Let $A$ denote the cylinder $\mathbb R \times S^1$ or the band $\mathbb R \times I$, where $I$ stands for the closed interval.
We consider $2$-{\sf moderate immersions} of closed curves (``{\sf doodles}") and compact surfaces (``{\sf blobs}") in $A$, up to cobordisms that also are $2$-moderate immersions in $A \times [0, 1]$ of surfaces and solids. By definition, the $2$-moderate immersions of curves and surfaces do not have tangencies of order $\geq 3$ to the fibers of the obvious projections $A \to S^1$,\; $A \times [0, 1] \to S^1 \times [0, 1]$ or $A \to I$,\; $A \times [0, 1] \to I \times [0, 1]$. 
These bordisms come in different flavors: in particular, we consider one flavor based on {\sf regular embeddings} of doodles and blobs in $A$. We compute the bordisms of regular embeddings and construct many  invariants that distinguish between the bordisms of immersions and embeddings. In the case of oriented doodles on $A= \mathbb R \times I$, our computations of $2$-moderate immersion bordisms $\mathbf{OC}^{\mathsf{imm}}_{\mathsf{moderate \leq 2}}(A)$ are near complete: we show that they can be described by an exact sequence of abelian groups 
$$0 \to \mathbf K  \to \mathbf{OC}^{\mathsf{imm}}_{\mathsf{moderate \leq 2}}(A)\big/\mathbf{OC}^{\mathsf{emb}}_{\mathsf{moderate \leq 2}}(A) \stackrel{\mathcal I \rho}{\longrightarrow} \mathbb Z \times \mathbb Z \to 0,$$
where $\mathbf{OC}^{\mathsf{emb}}_{\mathsf{moderate \leq 2}}(A) \approx \mathbb Z \times \mathbb Z$, the epimorphism $\mathcal I \rho$ counts different types of crossings of immersed doodles, and the kernel $\mathbf K$ contains the group $(\mathbb Z)^\infty$ whose generators are described explicitly.  
\end{abstract}

\section{Introduction}

This paper illustrates the richness of {\sf traversing} vector flows (see Definition \ref{def.2.3XXX}) on surfaces with boundary. It also provides tools for constructing such flows (see Fig. \ref{fig.1.4A}). Some multi-dimensional versions of these ideas and constructions can be found in \cite{K6}, \cite{K7}, and \cite{KSW1}. However, the case of, so called, $2$-{\sf moderate}  one- and two-dimensional embeddings and immersions against the background of a fixed $1$-dimensional foliation on a target surface $A$ has its unique and pleasing features. One of which is the drastic simplification of the combinatorial considerations that characterize our treatment in \cite{K6}, \cite{K7}, and \cite{KSW2} of similar multidimensional immersions. \smallskip

Propositions \ref{prop.5.1_XYZ} and \ref{prop.Triple_int_X} below illustrate the flavor of some of our results. 

Consider the vector space $\R^3_{xyz}$ with coordinates $x, y, z$ and the obvious projections $P_z:  \R^3_{xyz} \to \R^2_{xy}$ and $p_z:  \R^2_{xz} \to \R^1_{x}$. 
Let $\mathcal C \subset \R^2_{xz}$ be a smooth simple curve, the boundary of a compact domain $\mathcal D \subset \R^2_{xz}$, such that $p_z:  \mathcal C \to  \R^1_{x}$ has only singularities that are quadratic (folds). 
We say that $\mathcal C$ is {\sf positively (negatively) concave} if the function $x: \mathcal  D \to \R$ has more local maxima (minima) than minima (maxima) 
(see Fig. \ref{fig.1.9_X},  {\sf(b)}). 

\begin{proposition}\label{prop.5.1_XYZ} Let $\mathcal C \subset \R^2_{xz}$ be a simple smooth curve such that the projection $p_z:  \mathcal C \to  \R^1_{x}$ has only quadratic folding singularities and $\mathcal C$  is positively (negatively) concave. 

Then any smooth compact surface $\mathcal S \subset \R^3_{xyz} \cap \{y \geq 0\}$ that bounds $\mathcal C$ must have at least cubic singularities (cusps) under the map $P_z: \mathcal S \to \R^2_{xy}$. \hfill $\diamondsuit$
\end{proposition}

Proposition \ref{prop.5.1_XYZ} is implied directly by Theorem \ref{th1.8}.
The key feature here is the interplay between the curve $\mathcal C$,  surface $\mathcal S$, and the simple $1$-dimensional foliation in $\R^3_{xyz}$ whose leaves are the fibers of the projection $P_z$. \smallskip

A different, but related phenomenon is exemplified by the next proposition, which follows from the proof of Theorem \ref{th.1.DOODLES}. 

\begin{proposition}\label{prop.Triple_int_X} Consider the immersed curve $\mathcal C$ in $\R^2_{xz}$ shown in Figure \ref{fig.4.8888XX}, {\sf(a)}. 

Then any compact immersed surface $\mathcal S \subset \R^3_{xyz} \cap \{y \geq 0\}$, that bounds  $\mathcal C$ must have two triple intersection points at least. 
\hfill $\diamondsuit$
\end{proposition}

Although doodles on surfaces has been a well-traveled destination \cite{CKS}, \cite{BFKK}, doodles against the background of a given foliation on a compact surface are not. The same can be said about submersions $\a: X \to A$ of compact surfaces $X$ on the cylinders or strips $A$, equipped with a product foliation $\mathcal F(\hat v)$.  This simple product foliation is responsible for the term ``{\sf ruled}" in the title of the paper.\smallskip

Our general inspiration comes from the pioneering works of V. I. Arnold \cite{Ar}, \cite{Ar1}, \cite{Ar2}, and V. A. Vassiliev \cite{Va}, \cite{Va1}. \smallskip
\smallskip 

The main problem we are dealing here is to classify such submersions $\a: X \to A$ (up to a kind of cobordism), while \emph{controlling the tangency patterns} of the boundary $\d X$, or rather of the curves $\a(\d X)$ to the foliation $\mathcal F(\hat v)$.  As a byproduct, we getting some computable bordism-like relation among traversing vector fields on compact surfaces with boundary.
\smallskip

\begin{figure}[ht]
\centerline{\includegraphics[height=1.8in,width=5.2in]{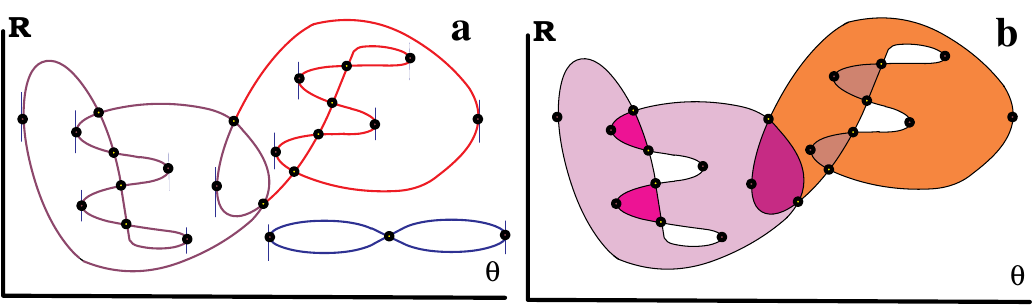}}
\bigskip
\caption{\small{Diagram {\sf(a)} shows {\sf  doodles}---an immersion $\b: \mathcal C \to A$ of $3$ loops $\mathcal C$ in the surface $A = \R \times [0,1]$. Diagram {\sf(b)} shows {\sf blobs}---an immersion $\a: X \to A$ of two disks $X$ in $A$. The self-intersections of the curves $\b(\mathcal C)$ and of $\a(\d X)$ and the points of tangency of $\b(\mathcal C)$ and of $\a(\d X)$ to the vertical foliation $\mathcal F(\hat v)$ on $A$ are marked. Thanks to the presence of figure ``$\infty$", $\b$ does not extend to an immersion $\a$ of any compact surface $X$ into $A$.}}
\label{fig.1.4B}
\end{figure}

Thus, we consider two target spaces, the cylinder $\R \times S^1$ and and the strip $\R \times [0, 1]$.  Some of the constructions will work for the cylinder, and some for the strip; for both, we use the same notation ``$A$".  The space $A$ is equipped with a traversing (see Definition \ref{def.2.3XXX} and \cite{K1}, \cite{K2}) vector field $\hat v$ and the $1$-dimensional oriented foliation $\mathcal F(\hat v)$ it generates. Its leaves are of the form $\{\R \times \theta\}_\theta$, where $\theta$ belongs either to the circle $S^1$ or to the interval $I = [0,1]$.

We draw some ``{\sf doodles}" (finite collections of loops) on $A$ (as in Fig. \ref{fig.1.4B}, diagram {\sf(a)}) and pay close attention to the way they intersect the leaves of the foliation $\mathcal F(\hat v)$, especially to the way they are tangent to the leaves. These interactions of doodles with the leaves are combinatorial in nature. We will impose some a priori restrictions (called ``$2$-{\sf moderate}") on these combinatorial patterns and will  classify the doodles that respect the restrictions. 

In the paper, we will also consider doodles that are the images of boundaries of compact surfaces $X$, \emph{immersed} in $A$ (as shown in Fig. \ref{fig.1.4B}, {\sf(b)}). The images of $X$ in $A$ form overlapping  ``{\sf blobs}".\smallskip

Our main results about doodles and blobs on a ruled page $(A,\, \mathcal F(\hat v))$ are contained in Theorems \ref{th.1.7}-\ref{th1.8} and Theorems \ref{th.1.DOODLES}-\ref{th.1_BLOBS_KERNEL_ORIENT}. Perhaps, some new invariants of doodles and blobs that lead to these theorems will have an independent life. \smallskip

Let us set the stage for these results in a more formal way. Let $X$ be a compact surface with boundary and $v$ a {\sf traversing} vector field (see Definition \ref{def.2.3XXX}) on $X$. As a function of a point $x \in X$, the $v$-trajectory $\g_x \subset X$ through $x$ exhibits a discontinuous behavior in the vicinity of any ``concave" point (see Definition \ref{def.1.2}) of the boundary 
\cite{K1}. In order to get around this fundamental difficulty, we ``envelop" the pair $(X, v)$ into a pair $(\hat X, \hat v)$, where an ambient compact surface $\hat X \supset X$ with a traversing vector field $\hat v$, is such that: 

{\sf (1)} $\d \hat X$ is convex (see Definition \ref{def.1.2}) with respect to the $\hat v$-flow on $\hat X$,  

{\sf (2)} $\hat v|_X = v$.  \smallskip


Without lost of generality, the reader may think of $\hat X$ as residing in the cylinder $\R \times S^1$ or in the strip $\R \times [0, 1]$ and the vector field $\hat v$ as being the unit vector field, tangent to the leaves of the product foliation $\mathcal F(\hat v)$.
\smallskip

Not any pair $(X, v)$ admits such {\sf convex envelop} (see Lemma \ref{lem.1.4}). However, when available, the convex envelop $(\hat X, \hat v)$ simplifies the analysis of $(X, v)$ greatly. \smallskip

In this context, our goal is to study convex envelops (and their generalizations, the so called, {\sf quasi-envelops}, shown in Fig. \ref{fig.1.4A}) together with the doodles or blobs they contain, up to some natural equivalence relations that we call in \cite{K6}, \cite{K7}  ``{\sf quasitopies}", a crossover between bordisms and pseudoisotopies of immersions. 
When $A = \R \times [0, 1]$, the quasitopy (bordism) equivalence classes can be organized into  \emph{groups}. In \cite{K6}, we compute these algebraic structures for an a priori prescribed set of combinatorial tangency patterns of ``$n$-dimensional doodles" to product-like $1$-dimensional foliations. For $(n+1)$-dimensional traversing convexly enveloped flows, this goal is achieved in full generality in \cite{K7}. However, the case of quasi-enveloped traversing flows is far from being settled. 
Although in two dimensions these structures are relatively primitive, as this paper illustrates, they are not completely trivial ether. \smallskip

Recall that in the study of manifolds and fibrations the universal classifying spaces like Grassmanians play a pivotal role. In the category of convex envelops, the role of universal objects (of ``the new Grassmanians") is played by various spaces of smooth functions $f: \R \to \R$ whose zeros (considered with their multiplicities) are modeled after the real divisors of real polynomials. The topology of these functional spaces with {\sf constrained zero divisors} is interesting on its own right (see \cite{KSW1}, \cite{KSW2}, where it is described in detail). One particular kind of these functional spaces, called spaces of smooth functions/polynomials with $k$-{\sf moderate singularities}, has been introduced and  studied in depth by V. I. Arnold \cite{Ar}, \cite{Ar1} and V. A. Vassiliev \cite{Va}, \cite{Va1}.  In  \cite{KSW1}, \cite{KSW2}, 
we compute the homology of similar functional spaces, based on real polynomials in one variable,  in terms of appropriate \emph{universal} combinatorics.  This is reminiscent to the role played by the Schubert calculus in depicting the characteristic classes of classical Grassmanians.\smallskip


Recall that, for {\sf boundary generic} $2$-dimensional traversing flows (see Definition \ref{def.1.1}), no tangencies of orders $\geq 3$ to the boundary may occur \cite{K2}. In light of what has been outlined above, we should anticipate a link between boundary  generic  flows on surfaces and the {\sf spaces of smooth functions} $f: \R \to \R$ (or even real polynomials) that have no zeros of multiplicities $\geq 3$. 
 This connection and its generalizations are validated in \cite{K6}, \cite{K7}. 
 \smallskip
 

Let  $\mathcal F$ denote the space of smooths functions $f: \R \to \R$ that are identically $1$ outside of a compact interval (the interval may depend on a particular function). The space $\mathcal F$ is considered in the $C^\infty$-topology. Let $\mathcal F^{< k}$ be its subspace, formed by functions  that have zeros only of the multiplicities less than $k$. Arnold calls such functions ``{\sf functions with $k$-moderate singularities}".  
The property of a function to have $k$-moderate zeros is an \emph{open property} in the $C^\infty$-topology; that is, the space $\mathcal F^{< k}$ is open in $\mathcal F$. 
 \smallskip
 
For $k = 2$, the combinatorial patterns $\om$ of zero divisors of such functions are finite sequences of natural numbers, build of $1$'s and $2$'s only, as well as the empty sequence.\smallskip

A fundamental theorem of Vassiliev (\cite{Va}, Corollary on page 81 and The First and Second Main Theorems on pages 78-79) identifies the weak homotopy types of the spaces $\mathcal F^{< k}$ (for all $k \geq 4$) and their integral homology types (for all $k \geq 3$) as $\Omega S^{k-1}$, the space of loops on a $(k-1)$-sphere!  In particular, the integral homology of the space  $\mathcal F^{\leq 2} =_{\mathsf{def}} \mathcal F^{< 3}$ is isomorphic to the homology of the loop space $\Omega S^2$. 
\smallskip

Arnold proved that the fundamental group $\pi_1(\mathcal F^{\leq 2}) \approx \Z$ (\cite{Ar}), the result that we will use on a number of occasions. Thus, $H^1(\mathcal F^{\leq 2}; \Z) \approx \Z$ as well. 

Therefore, for a $2$-dimensional convex quasi-envelop $\a: (X, v) \subset (A, \hat v)$ with no tangencies of $\a(\d X)$ to the oriented foliation $\mathcal F(\hat v)$ of order $\geq 3$, this fact allows to define a {\sf characteristic class} $J^\ast_\a \in H^1(A, \d_\star A; \Z) \approx \Z$ of $\a$. Here $\d_{\star} A = \emptyset$ for $A = \R \times S^1$, and $\d_\star A = \R \times \d([0, 1])$ for $A = \R \times [0, 1]$. 
\smallskip

Let $d$ be an even positive integer. We will employ the subspaces $\mathcal F_d^{\leq 2} \subset \mathcal F^{\leq 2}$, formed by functions whose ``{\sf degree}"---the sum of multiplicities of all its zeros---is even and does not exceed $d$.
\smallskip

\section{Convex quasi-envelops of traversing flows on surfaces and spaces of smooth functions  with $2$-moderate singularities}

Let us start with a number of definitions that have their origins in \cite{K1}-\cite{K5} and introduce different kinds of vector fields. Unfortunately, the list of definitions is long.
\smallskip

Let $v$ be a smooth vector field on a compact connected surface $X$ with boundary, such that $v \neq 0$ along the boundary $\d X$. Following \cite{Mo}, we consider the closed locus $\d_1^+X(v) \subset \d X$, where the field points inside of $X$ or is tangent to $\d X$,  and the closed locus $\d_1^-X(v)\subset \d X$, where $v$ points outside of $X$ or is tangent to $\d X$. The intersection $$\d_2X(v) =_{\mathsf{def}} \d_1^+X(v)\cap \d_1^+X(v)$$ is the locus where $v$ is \emph{tangent} to the boundary $\d X$. 

Points $z \in \d_2X(v)$ come in two flavors: by definition, $z \in  \d^+_2X(v)$ when $v(z)$ points inside of the locus $\d_1^+X(v)$, otherwise  $z \in  \d^-_2X(v)$. 


\begin{definition}\label{def.1.1}
A vector field $v$ on a compact surface $X$ is {\sf boundary generic} if:
\begin{itemize}
\item $v|_{\d X} \neq 0$, 
\item $v|_{\d X}$, viewed as a section of the normal $1$-dimensional (quotient) bundle \hfill\break $n_1 =_{\mathsf{def}} T(X)|_{\d X} \big/ T(\d X)$, is transversal to its zero section at the points of the locus $\d_2X(v)$. 
\hfill $\diamondsuit$
\end{itemize} 
\end{definition}

\noindent In particular, for a boundary generic $v$, the loci $\d_1^\pm X(v)$ are finite unions of closed intervals and circles, residing in $\d X$; the loci $\d_2^\pm X(v)$ are finite unions of points, residing in $\d_1 X$.

Each trajectory $\g$ of a traversing vector field $v$ must reach the boundary both in positive and negative times: otherwise $\g$ is not homeomorphic to a closed interval. \smallskip

\begin{definition}\label{def.1.2} A boundary generic vector field $v$ is {\sf boundary convex} if $\d_2^+X(v) = \emptyset$. A boundary generic $v$ is {\sf boundary concave} if $\d_2^-X(v) = \emptyset$. 
\hfill $\diamondsuit$
\end{definition}

\begin{definition}\label{def.2.3XXX} A non-vanishing vector field $v$ on a compact surface $X$ is {\sf traversing} if all its trajectories are closed segments or singletons.\footnote{It easy to see that the ends of these segments, as well as the singletons, must reside in the boundary $\d X$.}  

Equivalently, $v$ is traversing if there exists a Lyapunov function $f: X \to \R$ such that $df(v) > 0$ \cite{K1}.
\hfill $\diamondsuit$
\end{definition}

\begin{definition}\label{def.1.5} A traversing vector field $v$ on a compact surface $X$ is called {\sf traversally generic}, if the following properties hold:

 {\sf(1)} if a  trajectory $\g$ is tangent to the boundary $\d X$, then the tangency is simple, 
 
 {\sf (2)} no $v$-trajectory $\g$ contains more then one simple point of tangency to $\d X$. 
  \hfill $\diamondsuit$
\end{definition}

\begin{definition}\label{def.1.7} Let $\hat v$ be the standard traversing vector field on the surface $A$ (a strip or a cylinder), tangent to the fibers of the projections $\R \times [0, 1] \to [0, 1]$ or $\R \times S^1 \to S^1$. Let $\mathcal C$ be a closed $1$-dimensional manifold (a finite collection of several circles). Consider an \emph{immersion} $\b: \mathcal C \to A$.
\smallskip 

We say that such $\b$ is {\sf $2$-moderately generic} relatively to $\hat v$ 
if no $\hat v$-trajectory $\hat\g$ has {\sf order of tangency} $\geq 3$ to $\b(\mathcal C)$. Here the order of tangency is understood as the sum of tangency orders of local branches of $\b(\mathcal C)$ that pass through a given point of $\hat\g$. 
 \hfill $\diamondsuit$
\end{definition}

By standard techniques of the singularity theory,  we can perturb any given immersion $\b: \mathcal C \to A$ so that $\b(\mathcal C)$ will have only transversal self-intersections and will become boundary 
generic, and thus, $2$-moderately generic. \smallskip


In fact, any orientable connected surface $X$ with boundary admits an {\sf immersion} $\a: X \to A$ (or in the plane $\R^2$) (see Fig. \ref{fig.1.4A}). We will use this fact to pull-back to $X$ the standard traversing vector field $\hat v$ on $A$. 

\begin{definition}\label{def.1.8_A} Consider an \emph{immersion} $\a: X \to A$ of a  given compact (orientable) surface $X$ into the surface $A$, equipped with the standard vector field $\hat v$ and the foliation $\mathcal F(\hat v)$ it generates. 
\smallskip

$\bullet$ Given a transversing vector field $v$ on $X$, we call an immersion $\a$ a {\sf convex quasi-envelop} of $(X, v)$ if $v = \a^\dagger(\hat v)$, the pull-back of $\hat v$ under $\a$. 

$\bullet$ Such $\a$ is called {\sf $2$-moderately generic relative to} $\hat v$, if the restriction $\a|_{\d X}$ is $2$-moderately generic with respect to $\hat v$ in the sense of Definition \ref{def.1.7}. \smallskip
 \hfill $\diamondsuit$
\end{definition}

\begin{figure}[ht]
\centerline{\includegraphics[height=2.5in,width=3.2in]{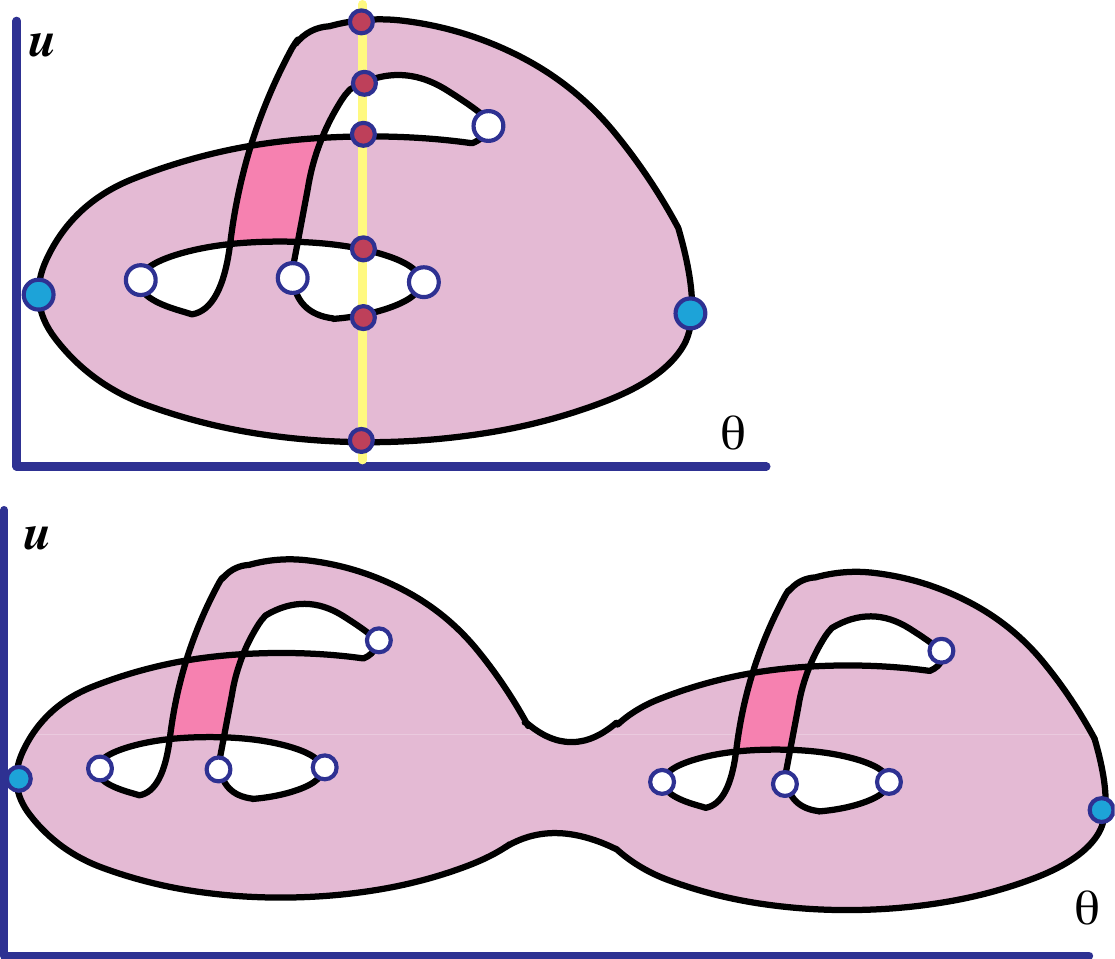}}
\bigskip
\caption{\small{A convex quasi-envelop $\a: X \to A$ of a $2$-moderately generic (actually, even traversally generic) vector field $\a^\dagger(\d_u)$ on a compact surface $X$, the torus from which an open smooth disk  is removed (the top diagram), and on a compact surface $X$, the closed surface of genus $2$ from which an open smooth disk is removed (the bottom diagram). In both examples, the cardinality of the fibers of the map $\theta \circ \a: \d X \to \mathcal T(\hat v)$ does not exceed $6$.}}
\label{fig.1.4A}
\end{figure}

The next definition is a special case of Definition \ref{def.1.8_A}. 

\begin{definition}\label{def.1.8} Let $\a: X \to A$ be a \emph{regular embedding}  of a  given compact surface $X$ into the surface $A$, carrying the standard vector field $\hat v$. We denote by $v$ the pull-back $\a^\dagger(\hat v)$ of $\hat v$ under $\a$. 

We say that  the pair $(A, \hat v)$ is a {\sf convex envelop} of $(X, v)$. If $\a|_{\d X}$ is $2$-moderately generic, we call $\a$ a {\sf $2$-moderately generic convex envelop}. 
\hfill $\diamondsuit$
\end{definition}

As the next lemma testifies, the existence of a convex envelop puts significant restrictions on the topology of orientable surfaces $X$: such $X$ are disks with holes or unions of such. 

\begin{lemma}\label{lem.1.4} If a compact connected surface $X$ with boundary has a pair of loops whose transversal intersection is a singleton, then no traversal flow on $X$ admits a convex envelop. In other words, if a connected surface $X$ with boundary has a handle, then no traversal flow on $X$ can be  convexly enveloped. 
\end{lemma}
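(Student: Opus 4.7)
I would argue by contradiction. Suppose a traversing vector field $v$ on $X$ admits a convex envelop, i.e., a regular embedding $\a\colon X \hookrightarrow A$ with $\a^\dagger(\hat v) = v$. The key observation is that both possible ambient surfaces are planar: the strip $\R \times [0,1]$ embeds into $\R^2 \subset S^2$, and the cylinder $\R \times S^1$ is diffeomorphic to $S^2$ minus two points. Hence $\a$ produces a codimension-zero embedding $X \hookrightarrow S^2$, and in particular $X$ inherits an orientation.

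Next, I would invoke the classical fact that a compact connected orientable surface with boundary embedded in $S^2$ must have genus zero. The standard argument uses the Jordan--Schoenflies theorem: each boundary circle of $X$ bounds a disk in $S^2 \setminus \mathrm{int}(X)$; capping $X$ with these disks yields a closed connected surface embedded in $S^2$, which must equal $S^2$ itself. Therefore $X$ is a $2$-sphere with finitely many disjoint open disks removed.

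It follows that every class in $H_1(X;\Z)$ is represented by a union of boundary-parallel loops, and any two such representatives can be isotoped to be disjoint. Thus the intersection form on $H_1(X;\Z)$ vanishes identically. On the other hand, a pair of loops in $X$ meeting transversally in a single point realizes algebraic intersection number $\pm 1$, which contradicts vanishing of the form. The second sentence of the lemma---the statement about a handle---follows at once, since the presence of a handle in $X$ is equivalent to possessing such a dual pair of loops with transversal intersection a singleton.

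I foresee no serious technical obstacle; the argument reduces entirely to the planarity of the ambient $A$ together with elementary surface topology. The only point that merits care is the initial observation that both flavors of $A$ sit inside $S^2$, after which the convexity of the envelop plays no further role---the conclusion really is that $X$ fails to embed at all, regardless of the choice of $v$.
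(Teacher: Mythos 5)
Your proof is correct and follows essentially the same route as the paper: both arguments reduce to the planarity of the ambient surface, from which it follows that no two loops can meet transversally in a single point. You simply spell out in more detail (via Jordan--Schoenflies and the vanishing of the intersection form) the step the paper asserts in one line, and you work directly from the definition of a convex envelop as an embedding into $A$ rather than citing the paper's auxiliary claim that the enveloping surface is a disk or annulus.
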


\begin{proof} By \cite{K4}, Theorem 1.2, the space $\hat X$ of a convex envelop is either a disk or an annulus, both surfaces residing in the plane. No two loops in the plane intersect transversally at a singleton. Thus, for surfaces with a handle, no convex envelops exist.
\hfill
\end{proof}

To incorporate surfaces with handles into our constructions, in Definition \ref{def.1.8_A}, we  introduce the less restrictive notion of a convex \emph{quasi}-envelop. \smallskip

Now we would like to explore closely a nice connection between: 

{\sf (1)} immersions $\a: (X, v) \to (A, \hat v)$ of a compact surface $X$ in the surface $A$, such that $v = \a^\dagger(\hat v)$ and $\a(\d X)$  is $2$-moderately generic with respect to $\hat v$, and \smallskip

 {\sf (2)} loops in the functional spaces $\mathcal F^{\leq 2}$.\smallskip

Let $\a(\d X)^\times$ denote the finite set of self-intersections of  the curves forming the image $\a(\d X)$. Let $\a(\d X)^\circ$ denote the set $\a(\d X) \setminus \a(\d X)^\times$. \smallskip

With the pattern $\a(\d X) \subset A$ we associate an auxiliary smooth function  
$z_\a: A \to \R$, defined by the following properirs:
\begin{eqnarray}\label{eq.1.2}
\end{eqnarray}
\begin{itemize}
\item $z_\a^{-1}(0) = \a(\d X)$,
\item $0$ is the regular value of $z_\a$ at the points of $\a(\d X)^\circ$, 
\item in the vicinity of each transversal crossing point $a \in \a(\d X)^\times$, there exist  locally defined smooth functions $x_1, x_2: A \to \R$  such that: $0$ is their regular value,  $\{x_1^{-1}(0)\}$ and $\{x_2^{-1}(0)\}$ define the two local intersecting branches of $\a(\d X)$, and   $z_\a = x_1\cdot x_2$ locally.
\item $z_\a$  approaches $1$ at infinity in $A = \R\times S^1$ or in $A= \R\times [0, 1]$. 
\end{itemize}

The sign of $z_\a$ changes to the opposite, as a path in $A$ crosses an arc from $\a(\d X)^\circ$ transversally, thus providing a ``checker board" coloring of the domains in $A \setminus \a(\d X)$. The properties in (\ref{eq.1.2}) do not determine a unique function $z_\a$, but any such function will serve equally well in the forthcoming constructions. 
\smallskip

Given a smooth traversing vector field $v$ on a compact surface $X$, we denote by $\mathcal T(v)$ the space of $v$-trajectories. For a boundary generic and traversing $v$, the space $\mathcal T(v)$ is a finite graph; for a traversally generic $v$, the space $\mathcal T(v)$ is a finite graph whose vertexes are only of valency $3$ and $1$ \cite{K4}. For the standard vector field $\hat v$ on $A$, the trajectory space $\mathcal T(\hat v)$ is either a circle $S^1$, or an interval $[0, 1]$.\smallskip

Let $u: A \to \R$ be the obvious projection on the first (``vertical") factor. In particular,  $d u(\hat v) > 0$ and $u(\hat\g) = \R$ for all $\hat v$--trajectories $\hat\g$ in $A$. Then, with the help of $z_\a$ and $u$, we get a map $J_{z_\a}: \mathcal T(\hat v) \to \mathcal F^{\leq 2}$ whose source, the trajectory space  $\mathcal T(\hat v)$, is either a circle or a closed interval.  The image  of $J_{z_\a}$ belongs to the space of smooth functions $f: \R \to \R$ such that the set $\{f \leq 0\}$ is compact in $\R$,  $f$ has no zeros of multiplicity $\geq 3$, and $\lim_{u \to \pm\infty} f(u) =1$. We define the map $J_{z_\a}$ by the formula 
\begin{eqnarray}\label{eq1.12}
J_{z_\a}([\hat \g]) = (z_\a|_{\hat \g}) \circ (u |_{\hat\g})^{-1},
\end{eqnarray}
where $\hat \g$ stands for a $\hat v$-trajectory in $A$, and $[\hat\g]$ for the corresponding point in the trajectory space $\mathcal T(\hat v)$. If $A$ is the strip, $\mathcal T(\hat v)= [0, 1]$. The two ends of the interval are mapped by $J_{z_\a}$ to the convex subspace $\mathcal F^{+}$ of $\mathcal F$ that is formed by \emph{strictly positive} smooth functions. Therefore, 
we get a map of pairs $$J_{z_\a}: ([0, 1],\; \d[0, 1]) \to (\mathcal F^{\leq 2},\; \mathcal F^{+}).$$
The constant point-function $\mathbf 1$ is a deformation retract of  $\mathcal F^{+}$. Thus, homotopically, $J_{z_\a}$ can be regarded as a based loop $$J_{z_\a}: (S^1, pt)  \to (\mathcal F^{\leq 2}, \; \mathcal F^{+}) \sim (\mathcal F^{\leq 2},\; \mathbf 1).$$

For a fixed $\a$, the homotopy class $[J_{\a}]$ of the map $J_{z_\a}$ does not depend on the choice of the auxiliary function $z_\a$, subject to the four properties in (\ref{eq.1.2}). Indeed, consider the $\a_\ast$-image $\a_\ast(\nu)$ of the outer normal vector field $\nu$ to the boundary $\d X$ in $X$. At the points of $a \in \a(\d X)^\times$, we get two vectors $v_1(a), v_2(a) \in \a_\ast(\nu)$, one for each branch of $\a(X)$. 

Let $\mathcal L_w$ denote the directional derivative in the direction of a vector $w$. If $z_\a$ and $z'_\a$ are any two functions that satisfy all the properties from the list (\ref{eq.1.2}), we get $\mathcal L_{\hat v} z_\a > 0$ and $\mathcal L_{\hat v} z'_\a > 0$ at each point of $\a(\d X)^\circ$. Therefore, we get $\mathcal L_{\hat v} \{\tau z_\a + (1-\tau)z'_\a\}  > 0$, where $\tau \in [0, 1]$, at each point of $\a(\d X)^\circ$. At the same time, for $i=1, 2$, at each transversal crossing $a \in \a(\d X)^\times$, we have $\mathcal L_{\hat v}x_i > 0$,  and  $\mathcal L_{\hat v} x'_i > 0$. Hence, $\mathcal L_{\hat v} \{\tau x_i + (1-\tau)x'_i\}  > 0$. The rest of properties from (\ref{eq.1.2}) are obviously ``convex" .

Therefore,  the space of functions $z_\a$ that satisfy (\ref{eq.1.2}) is convex and thus contractible, which implies that the homotopy class $[J_{\a}]$ does not depend on the choice of $z_\a$.\smallskip

As a result, for $A = \R \times S^1$, any moderately generic convex quasi-envelop $\a: X \to A$,  produces a homotopy class $[J_{\a}] \in [S^1, \mathcal F^{\leq 2}]$. For $A = \R \times [0, 1]$, any moderately generic convex quasi-envelop $\a: X \to A$,  produces a homotopy class $[J_{\a}] \in [(S^1, pt),\, (\mathcal F^{\leq 2}, \mathbf 1)]$.  

Note that these homotopy classes do not depend on the orientation of the surface $X$.
\smallskip

We pick a generator $\kappa \in \pi_1(\mathcal F^{\leq 2}, \mathbf 1) \approx \Z$ (see \cite{Ar1} and Fig. \ref{fig.1.9_X}, (a) or (b), for realistic portraits of $\kappa$). 
For $A = \R \times [0,1]$, we define an integer $\mathsf J^\a$ by the formula $\mathsf J^\a \cdot \kappa = [J_{\a}]$. \smallskip



The isomorphism $\pi_1(\mathcal F^{\leq 2}, \mathbf 1) \approx \Z$ follows from \cite{Ar1}. A slight modification of Arnold's arguments leads to Theorem \ref{th.1.7} below. The main difference between our constructions and the ones from \cite{Ar1} is that Arnold is concerned with immersions of $1$-dimensional oriented ``doodles" in $A$, while we deal also with immersions of ``blobs" (compact orientable surfaces) in $A$  (compare diagrams (a) and (b) in Fig. \ref{fig.1.4B}).  
\smallskip

We denote by $\hat v^\bullet$ the vector field $(\hat v, 0)$ on the solid $A \times [0,1]$ and by $\pi: A \times [0,1] \to [0, 1]$ the obvious projection. \smallskip

We have seen how a $2$-moderately generic immersion $\a: X \to A$ produces a homotopy class in $[S^1,\, \mathcal F^{\leq 2}]$ or in $[(D^1, \d D^1), (\mathcal F^{\leq 2}, \mathbf 1)]$. \smallskip

On the other hand, generic (oriented) loops in $\b: S^1 \to \mathcal F^{\leq 2}$ have an interpretation as finite collections  $\mathcal C$ of smooth \emph{embedded} closed curves (embedded ``doodles") in the surface $A$. These curves have no tangency to the $\hat v$-trajectories $\hat\g$ of order that exceeds two. In particular, any \emph{inflections} of the curves with respect to the $\hat\g$-trajectories are forbidden. 
Furthermore,  a generic homotopy between the maps $\b: S^1 \to \mathcal F^{\leq 2}$ corresponds to some {\sf cobordism like relation} between the corresponding plane curves $\mathcal C$.  The cobordism also avoids the forbidden tangencies of orders $\geq 3$ to the standard foliation $\mathcal F(\hat v^\bullet)$ on $A \times [0, 1]$.\smallskip


In order to define this cobordism, let us spell out more accurately the $2$-moderate genericity requirements on the collections  of doodles in $A$:
\begin{eqnarray}\label{eq1.4_A}
\end{eqnarray}
\begin{itemize}
\item $\b: \mathcal C  \to A$ is a smooth {\sf immersion} of a finite collection of (oriented) circles,
\item 
no intersections of $\b(\mathcal C)$ of multiplicities $\geq 3$ are permitted, 
\item the order of tangency between each branch of $\b(\mathcal C)$ and each $\hat v$-trajectory $\hat\g$ does not exceed $2$.
\item if a branch of $\b(\mathcal C)$ is quadratically tangent to a $\hat v$-trajectory at a point $x$, then no other branch contains $x$. \hfill $\diamondsuit$
\end{itemize}


\begin{definition}\label{def1.10XY}
Given two immersions $\b_0: \mathcal C_0 \to A$ and $\b_1: \mathcal C_1 \to A$  as in (\ref{eq1.4_A}), we say that they are {\sf (orientably) $2$-moderately cobordant}, 
if there is a compact (oriented) surface  $S$  and a smooth immersion $B: S \to A \times [0, 1]$
 such that:
\begin{enumerate}
\item $\d S = \mathcal C_0 \coprod -\mathcal C_1$,
\item $B|_{\mathcal C_0 \coprod \mathcal C_1} = \b_0 \coprod \b_1$,
\item the immersion $B| : S \to A \times [0, 1]$ is $2$-to-$1$ at most,
\item if two local branches of $B(S)$ intersect at a point $z = a \times \{t\}$, then each of them is transversal to the trajectory $\hat \g \times \{t\}$ through $z$, 
\item 
every $\hat v^\bullet$-trajectory $\hat\g \times \{t\} \subset A \times [0, 1]$ is tangent to each local branch of the surface $B(S)$ with the order of tangency that does not exceed $2$,\footnote{In particular, the cubic tangencies are forbidden.}
\item the map 
$S \stackrel{B|}{\longrightarrow} A \times [0, 1] \to [0, 1]$ is a Morse function with the regular values $\{0\}$ and $\{1\}$,
\hfill $\diamondsuit$
\end{enumerate}
\end{definition}

We introduce {\sf the bordism set} $\mathbf C^{\mathsf{imm}}_{\mathsf{moderate \leq 2}}(A)$ of immersions $\b: \mathcal C \to A$ with $2$-moderate tangencies to the foliation $\mathcal F(\hat v)$. It is possible to verify that this cobordism between immersions is an \emph{equivalence relation} (see \cite{K6}, \cite{K7}). \smallskip

Replacing ``immersions" with ``regular embeddings" in Definition \ref{def1.10XY}, we get a modified notion of bordisms of (oriented) doodles. We denote the set of such bordisms by $\mathbf C^{\mathsf{emb}}_{\mathsf{moderate \leq 2}}(A)$. 
Depending on whether we consider the loops, forming $\mathcal C$, oriented or not, we get oriented versions $\mathbf{OC}^{\mathsf{imm}}_{\mathsf{moderate \leq 2}}(A)$ and $\mathbf{OC}^{\mathsf{emb}}_{\mathsf{moderate \leq 2}}(A)$ of the bordisms $\mathbf C^{\mathsf{imm}}_{\mathsf{moderate \leq 2}}(A)$ and $\mathbf C^{\mathsf{emb}}_{\mathsf{moderate \leq 2}}(A)$.
\smallskip

In the next definition, all surfaces and $3$-folds that admit submersions in $A$ or in $A \times [0, 1]$ automatically must be orientable, but not necessarily oriented. 

\begin{definition}\label{def1.10_A}
Given two (oriented) submersions $\a_0: X_0 \to A$ and $\a_1: X_1 \to A$ with $\a_0|_{\d X_0}$ and $\a_1|_{\d X_1}$ as in (\ref{eq1.4_A}), we say that they are {\sf $2$-moderately cobordant}, 
if there is a compact (oriented) $3$-fold $W$ with corners $\d X_0 \coprod \d X_1$ and a submersion $B: W \to A \times [0, 1]$
 such that:
\begin{enumerate}
\item $\d W = X_0 \cup -X_1\cup \delta W$, where $\delta W =_{\mathsf{def}} \d W \setminus \mathsf{int}(X_0 \coprod X_1)$,\smallskip

\item $B|_{X_0 \coprod X_1} = \a_0 \coprod \a_1$,\smallskip

\item $B| : \delta W \to A \times [0, 1]$ has at most double self-intersections,\smallskip

\item if two local branches of $B(\delta W)$ intersect at a point $z = a \times \{t\} \in A \times [0, 1]$, then each of them is transversal to the trajectory $\hat \g \times \{t\}$ through $z$, \smallskip

\item 
every $\hat v^\bullet$-trajectory $\hat\g \times \{t\} \subset A \times [0, 1]$ is tangent to each local branch of the surface $B(\delta W)$ with the order of tangency $\leq 2$, \smallskip 

\item the map 
$\delta W \stackrel{B|}{\longrightarrow} A \times [0, 1] \to [0, 1]$ is a Morse function with the regular values $\{0\}$ and $\{1\}$.
\hfill $\diamondsuit$
\end{enumerate}
\end{definition}
 Thus, we can talk about {\sf the bordism set} $\mathbf B^{\mathsf{imm}}_{\mathsf{moderate \leq 2}}(A)$ or $\mathbf{OB}^{\mathsf{imm}}_{\mathsf{moderate \leq 2}}(A)$ of immersions/embeddings $\a: X \to A$ with $2$-moderate tangencies to the foliation $\mathcal F(\hat v)$. It is possible to verify that this cobordism between immersions is an \emph{equivalence relation} (see \cite{K7}). 

If in Definition \ref{def1.10_A} we replace all the ``immersions" with the ``regular embeddings" and drop vacuous constraints (2) and (3) of the definition, we will get a similar version of (oriented) bordisms of regular embeddings. We denote them by $\mathbf B^{\mathsf{emb}}_{\mathsf{moderate \leq 2}}(A)$ and  $\mathbf{OB}^{\mathsf{emb}}_{\mathsf{moderate \leq 2}}(A)$. \smallskip

 Note that, if in Definition \ref{def1.10_A} we drop all the constraints related to how the boundary of embedded blobs and of the solid embedded cobordisms interact with the $\hat v$-trajectories in $A$ and  with the $\hat v^\bullet$-trajectories in $A \times [0, 1]$, then the corresponding bordism groups $\mathbf{B}^{\mathsf{emb}}(A)$ and $\mathbf{OB}^{\mathsf{emb}}(A)$ of $A$ are trivial. Indeed, by pushing (oriented) blobs  $\a: X \hookrightarrow A$ inside $A \times [0, 1]$, while keeping their boundaries $\a(\d X)$ fixed in $A \times \{0\}$, we create a (oriented) solid that delivers the cobordism between $\a(X)$ and the empty blob. 
 \smallskip
 

The following obvious maps are available: 
\begin{eqnarray}\label{eq.1.5_XYZ}
\mathcal A:\; \mathbf B^{\mathsf{emb}}_{\mathsf{moderate \leq 2}}(A) & \longrightarrow & \mathbf B^{\mathsf{imm}}_{\mathsf{moderate \leq 2}}(A), \nonumber \\
\mathcal A:\; \mathbf{OB}^{\mathsf{emb}}_{\mathsf{moderate \leq 2}}(A) & \longrightarrow & \mathbf{OB}^{\mathsf{imm}}_{\mathsf{moderate \leq 2}}(A), \\
\mathcal A:\; \mathbf C^{\mathsf{emb}}_{\mathsf{moderate \leq 2}}(A) & \longrightarrow & \mathbf C^{\mathsf{imm}}_{\mathsf{moderate \leq 2}}(A),  \nonumber\\
\mathcal A:\; \mathbf{OC}^{\mathsf{emb}}_{\mathsf{moderate \leq 2}}(A) & \longrightarrow & \mathbf {OC}^{\mathsf{imm}}_{\mathsf{moderate \leq 2}}(A).
\end{eqnarray}
Also, taking the boundaries of (oriented) 
blobs, we get the obvious maps:
\begin{eqnarray}\label{eq.1.5_XYZ}
\mathcal B^\d:\; \mathbf B^{\mathsf{imm}}_{\mathsf{moderate \leq 2}}(A) & \longrightarrow & \mathbf{C}^{\mathsf{imm}}_{\mathsf{moderate \leq 2}}(A),  \nonumber
\\
\mathcal B^\d:\; \mathbf{OB}^{\mathsf{imm}}_{\mathsf{moderate \leq 2}}(A) & \longrightarrow & \mathbf{OC}^{\mathsf{imm}}_{\mathsf{moderate \leq 2}}(A), 
\\
\label{eq.1.8XYZ} \mathcal B^\d:\; \mathbf B^{\mathsf{emb}}_{\mathsf{moderate \leq 2}}(A) & \longrightarrow & \mathbf{C}^{\mathsf{emb}}_{\mathsf{moderate \leq 2}}(A),  \nonumber
\\
\mathcal B^\d:\; \mathbf{OB}^{\mathsf{emb}}_{\mathsf{moderate \leq 2}}(A) & \longrightarrow & \mathbf{OC}^{\mathsf{emb}}_{\mathsf{moderate \leq 2}}(A). 
\label{eq.1.9XYZ}
\end{eqnarray}

\smallskip
 The maps $\mathcal B^\d$ in  (\ref{eq.1.9XYZ}) are \emph{bijections} (see Theorem \ref{th.1.7} and \cite{K6}, \cite{K7}). \smallskip

Let us add one mild requirement to the list of the six properties in Definition \ref{def1.10_A}: 
\begin{eqnarray}\label{eq.extra_transversality}
\bullet \text{ All  the double intersections of } B |: \delta W \to A \times [0,1] \text{ are transversal.} 
\end{eqnarray} 

Combining the six properties from Definition \ref{def1.10_A} with the property in (\ref{eq.extra_transversality}), we could, at the first glance, get a more rigid notion of cobordisms of immersions.  Let us denote them temporarily   by $\mathbf B^{\mathsf{\times imm}}_{\mathsf{moderate \leq 2}}(A)$.\smallskip

Thus, we have the obvious map
$$\mathcal A^\times: \mathbf B^{\mathsf{\times imm}}_{\mathsf{moderate \leq 2}}(A) \to \mathbf B^{\mathsf{imm}}_{\mathsf{moderate \leq 2}}(A).$$
$$\mathcal A^\times: \mathbf{OB}^{\mathsf{\times imm}}_{\mathsf{moderate \leq 2}}(A) \to \mathbf{OB}^{\mathsf{imm}}_{\mathsf{moderate \leq 2}}(A).$$
By the general position argument, we may isotop an immersion $B$ via immersions so that all its self-intersections become transversal. Therefore, the maps $\mathcal A^\times$ are actually bijections \cite{K6}. 
\smallskip

In fact, the sets $\mathbf B^{\mathsf{imm}}_{\mathsf{moderate \leq 2}}(A),\, \mathbf B^{\mathsf{emb}}_{\mathsf{moderate \leq 2}}(A)$ and $\mathbf{OB}^{\mathsf{imm}}_{\mathsf{moderate \leq 2}}(A),\, \mathbf{OB}^{\mathsf{emb}}_{\mathsf{moderate \leq 2}}(A)$ have a {\sf group structure}. The group operation $\star$ takes a pair  of $2$-moderate submersions, $\a_1: X_1 \to A$ and $\a_2: X_2 \to A$, to a new $2$-moderate submersion $\a_1 \star \a_2: X_1 \coprod X_2 \to A$ by placing the image of $\a_2$ in $\R \times S^1$ or in $\R \times [0,1]$ {\sf above} the image of $\a_1$ (see Fig. \ref{fig.1.9_X}, (c) and (d)). At the first glance, the operation $\star$ seems to be non-commutative. At least for $A = \R \times [0,1]$, this first impression is wrong: by an appropriate isotopy one can switch the vertical order of $\a_1(X_1)$ and  
$\a_2(X_2)$ without violating the requirements that all the immersions are $2$-moderate.  
\smallskip

Note that the cardinality of the fibers of the map $\theta \circ (\a_1 \star \a_2)$  is less than or equal to the \emph{sum} of cardinalities of the fibers of $\theta \circ \a_1$ and of $\theta \circ \a_2$, where $\theta: A \to \mathcal T(\hat v)$ is the obvious map. 
\smallskip

In the case of $A = \R \times [0,1]$, another \emph{commutative} group structure is available in the sets $\mathbf B^{\mathsf{imm/emb}}_{\mathsf{moderate \leq 2}}(A)$ and  $\mathbf{OB}^{\mathsf{imm/emb}}_{\mathsf{moderate \leq 2}}(A)$ (note that, for more general than the $2$-moderate combinatorial constraints on the tangency patterns, similar groups may not be commutative \cite{K7}). It is induced by the operation 
$$\uplus:\; \mathbf B^{\mathsf{imm/emb}}_{\mathsf{moderate \leq 2}}(A) \times \mathbf B^{\mathsf{imm/emb}}_{\mathsf{moderate \leq 2}}(A) \to \mathbf B^{\mathsf{imm/emb}}_{\mathsf{moderate \leq 2}}(A)$$
$$\uplus:\; \mathbf{OB}^{\mathsf{imm/emb}}_{\mathsf{moderate \leq 2}}(A) \times \mathbf {OB}^{\mathsf{imm/emb}}_{\mathsf{moderate \leq 2}}(A) \to \mathbf {OB}^{\mathsf{imm/emb}}_{\mathsf{moderate \leq 2}}(A)$$
that places the image of $\a_2$ {\sf to the right} of the image of $\a_1$ along the segment $[0, 1]$; that is, $\a_1(X_1)$ is placed in the strip $\R \times (0,\, 0.5)$, while $\a_2(X_2)$ is placed in the strip  $\R \times (0.5,\, 1)$ (see Fig. \ref{fig.1.4A}, lower diagram). 

The zero element in $\mathbf B^{\mathsf{imm/emb}}_{\mathsf{moderate \leq 2}}(A)$ or in $\mathbf{OB}^{\mathsf{imm/emb}}_{\mathsf{moderate \leq 2}}(A)$ is represented by the empty surface $X$, and the minus of an immersion $\a: X \to A$ by a composition of $\a$ with a flip of $A$ with respect to the trajectory $\R \times \{0.5\}$. 

In contrast with the operation $\star$, the cardinality of the fibers of the map $\theta \circ (\a_1 \uplus \a_2)$  is less or equal to the \emph{maximum} of cardinalities of the fibers of $\theta \circ \a_1$ and of $\theta \circ \a_2$. Due of this good feature of the operation $\uplus$, we choose to study its generalizations in \cite{K6}, \cite{K7}. 
\smallskip

\begin{lemma} \label{lem.1.5XY}
For $A = \R \times [0,1]$, the operation $\uplus$ in $\mathbf B^{\mathsf{imm}}_{\mathsf{moderate \leq 2}}(A)$ or in $\mathbf{OB}^{\mathsf{imm}}_{\mathsf{moderate \leq 2}}(A)$ and in $\mathbf B^{\mathsf{emb}}_{\mathsf{moderate \leq 2}}(A)$ or in $\mathbf{OB}^{\mathsf{emb}}_{\mathsf{moderate \leq 2}}(A)$ is commutative.
\end{lemma}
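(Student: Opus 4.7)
The plan is to exhibit an explicit cobordism in $A \times [0,1]$ that swaps the two ``stacked'' pieces by first sliding them apart horizontally (along the foliation $\mathcal F(\hat v)$), then swapping their vertical slots, then sliding them back. Let $\a: X \to A$ and $\a': X' \to A$ be the two given $2$-moderate submersions representing classes in $\mathbf B^{\mathsf{imm/emb}}_{\mathsf{moderate \leq 2}}(A)$. Write $\tilde \a$ and $\tilde \a'$ for the rescaled versions with images contained, respectively, in $\R \times (0, 0.5)$ and in $\R \times (0.5, 1)$, so that $\a \uplus \a' = \tilde \a \coprod \tilde \a'$, and analogously for $\a' \uplus \a$. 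Choose $M > 0$ large enough that both $\tilde \a(X)$ and $\tilde \a'(X')$ lie in $[-M, M] \times [0,1]$.

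Take $W = (X \coprod X') \times [0, 1]$ and define $B : W \to A \times [0,1]$ in four time stages using a smooth cutoff $\chi: [0, 1] \to [0, 1]$ equal to $0$ on $[0, \tfrac14]$ and to $1$ on $[\tfrac34, 1]$, together with a bump $\rho$ supported in $[\tfrac14, \tfrac34]$ equal to $\tfrac12$ on $[\tfrac13, \tfrac23]$:
\begin{itemize}
\item on $X \times [0, \tfrac14]$, translate $\tilde \a$ horizontally by $-4Mt$ (along leaves);
\item on $X \times [\tfrac14, \tfrac34]$, keep the horizontal offset $-M$ fixed and translate vertically by $+\rho(t)$, moving $\tilde \a(X)$ from the lower slot to the upper slot;
\item on $X \times [\tfrac34, 1]$, translate horizontally back by $+4M(t - \tfrac34)$;
\item symmetrically for $X' \times [0, 1]$, with the signs of horizontal and vertical translations reversed.
\end{itemize}
At $t = 0$ one recovers $\a \uplus \a'$ and at $t = 1$ one recovers $\a' \uplus \a$. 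During $t \in [\tfrac14, \tfrac34]$ the two pieces lie in disjoint intervals $[-5M/4, -3M/4]$ and $[3M/4, 5M/4]$ of the $\R$-factor, so no collisions occur while they cross each other vertically.

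Verification of the conditions (1)--(6) of Definition \ref{def1.10_A} (and the analogous statement for embeddings in the $\mathbf B^{\mathsf{emb}}$-case, where constraints (2) and (3) are dropped) reduces to the observation that both horizontal and vertical translations of $A$ preserve the foliation $\mathcal F(\hat v)$ as a \emph{set} of leaves. Hence, at every time $t$, the slice $B|_{W \times \{t\}}$ is again $2$-moderately generic and the cardinalities of intersections of $B(\delta W)$ with a $\hat v^\bullet$-trajectory $\hat \g \times \{t\}$ are bounded by those for $\a(\d X) \coprod \a'(\d X')$. Self-intersections of $B|_{\delta W}$ arise only from self-intersections of $\a|_{\d X}$ or of $\a'|_{\d X'}$ separately (cross intersections being excluded by the disjointness above), so condition (3) of at-most-double intersections holds, and conditions (4), (5) follow from the $2$-moderate genericity of $\a$ and $\a'$: transversality of branches at self-intersections and tangency order $\leq 2$ to leaves are unchanged under translations of $A$, and the extra $\partial_t$-direction in $B$ automatically produces transversality of each branch of $B(\delta W)$ to the trajectory $\hat\g \times \{t\}$ in the $t$-coordinate. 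Condition (6) is trivial: the projection $\delta W \to [0,1]$ is the canonical $(w,t) \mapsto t$, a submersion with only regular values.

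The anticipated obstacle is purely technical: checking that the order of contact of the $2$-dimensional image $B(\delta W) \subset A \times [0,1]$ with the $1$-dimensional trajectory $\hat\g \times \{t_0\}$ at a contact point coincides with the order of contact of the corresponding $1$-dimensional image $\a(\d X)$ (or $\a'(\d X')$) with the leaf $\hat\g$ at the corresponding point in $A$. This identification relies on the fact that $B$ is a \emph{time-dependent translation}, so $\partial_s B_2$ at a slice equals $\partial_s (\tilde \a_2)$ (up to the constant vertical offset), and hence the jet-theoretic order of vanishing of the defining function of the leaf along the $s$-curve is unaffected. With this routine, but somewhat painstaking, jet computation in hand, the cobordism $B$ satisfies all the requirements, establishing $\a \uplus \a' \sim \a' \uplus \a$ in both $\mathbf B^{\mathsf{imm}}_{\mathsf{moderate \leq 2}}(A)$ and $\mathbf B^{\mathsf{emb}}_{\mathsf{moderate \leq 2}}(A)$.
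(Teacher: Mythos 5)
Your construction is essentially the paper's own proof: both exchange the two images by parallel shifts --- first separating them along the leaves of $\mathcal F(\hat v)$, then swapping their $\theta$-slots in $[0,1]$, then sliding back --- and both rest on the observation that parallel shifts preserve the foliation and hence the tangency orders, so the (product) cobordism stays $2$-moderate throughout. One small slip in your explicit formula: the vertical interchange should be driven by the monotone cutoff $\chi$ (scaled by $\tfrac12$), not by the bump $\rho$ supported in $[\tfrac14,\tfrac34]$, since $\rho(t) \to 0$ as $t \to \tfrac34$ would return $\tilde\a(X)$ to the lower slot instead of leaving it in the upper one.
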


\begin{proof} By a parallel transport in $A$ of the images $\a_1(X_1) \subset \R \times (0,\, 0.5)$ and $\a_2(X_2) \subset \R \times (0.5,\, 1)$, we can switch their order along $[0,1]$. This can be done  by sliding up  $\a_2(X_2)$ so that it will reside in $(q, +\infty) \times (0.5,\, 1)$, then by  sliding down $\a_1(X_1)$ so that it will reside in $(-\infty, -q) \times (0.5,\, 1)$. Here $q$ is a positive number that exceeds the vertical size of both images. Then we slide horizontally the new $\a_2(X_2)$ and place it $(q, +\infty) \times (0,\, 0.5)$.
Similarly, we slide horizontally the new $\a_1(X_1)$ and place it $(q, +\infty) \times (0.5,\, 1)$. Finally, we slide down $\a_1(X_1)$ and slide up $\a_2(X_2)$, thus completing the exchange. 

Thanks to the nature of all these slides (parallel shifts), the maximal order of tangency of the $\hat v$-trajectories to the moving images $\a_1(\d X_1)$ and $\a_2(\d X_2)$ does not exceed $2$.

Note that the maximal cardinality of the fibers of the projection $\a_1(\d X_1) \coprod \a_2(\d X_2) \to [0, 1]$ did increase in the exchange process. 
\hfill
\end{proof}

Let $\mathcal F^{= 2} \subset \mathcal F^{\leq 2}$ denote the {\sf discriminant hypersurface} in the space $\mathcal F^{\leq 2}$, formed by the functions from $\mathcal F^{\leq 2}$ with at least one zero of multiplicity $2$. \smallskip
 
The following proposition is similar to Theorem from \cite{Ar1}; however, our notion of cobordism of embedded blobs is different from the Arnold's more combinatorial notion of the cobordism of embedded doodles with no vertical inflection points.
\begin{theorem}\label{th.1.7} 
$\bullet$ For $A = \R \times [0, 1]$, the construction $\{\a \leadsto J_{z_\a}\}$ in (\ref{eq1.12}), where the immersion $\a|: \d X \to A$ has only $2$-moderate tangencies to the foliation $\mathcal F(\hat v)$, 
delivers a group homomorphism $$J^{\mathsf{imm}}: \mathbf B^{\mathsf{imm}}_{\mathsf{moderate \leq 2}}(A) \to \pi_1(\mathcal F^{\leq 2}, \mathbf 1) \approx \Z,$$
where the group addition in  $ \mathbf B^{\mathsf{imm}}_{\mathsf{moderate \leq 2}}(A)$ is the operation $\uplus$.\smallskip

$\bullet$ The same construction produces a group \emph{isomorphism}  $$J^{\mathsf{emb}}: \mathbf B^{\mathsf{emb}}_{\mathsf{moderate \leq 2}}(A) \approx \pi_1(\mathcal F^{\leq 2}, \mathbf 1)  \approx \Z.$$ The inverse map $(J^{\mathsf{emb}})^{-1}$ is delivered by the correspondence 
$$K: \big\{\tau: ([0, 1], \d[0,1]) \to (\mathcal F^{\leq 2}, \mathbf 1)\big\}\; \Rightarrow \; \bigcup_{\theta \in [0, 1]} \big(\tau(\theta)^{-1}\big((-\infty, 0])\big),\; \theta \big) \subset \R^1 \times [0, 1],$$
where $\tau$ is a continuous path in $\mathcal F^{\leq 2}$,  transversal to the discriminant hypersurface $\mathcal F^{= 2}$. \smallskip

$\bullet$ For $A = \R \times S^1$, the construction $\{\a \leadsto J_{z_\a}\}$, where an embedding $\a: X \to A$ has only $2$-moderate tangencies, 
delivers a $1$-to-$1$ map $$J^{\mathsf{emb}}: \mathbf B^{\mathsf{emb}}_{\mathsf{moderate \leq 2}}(A) \approx [S^1,\, \mathcal F^{\leq 2}].$$ 

The inverse map $(J^{\mathsf{emb}})^{-1}$ is induced by the correspondence 
$$K: \big\{\tau: S^1 \to \mathcal F^{\leq 2}\big\}\; \Rightarrow \; \bigcup_{\theta \in S^1} \big(\tau(\theta)^{-1}\big((-\infty, 0])\big),\; \theta \big) \subset \R^1 \times S^1.$$
\end{theorem}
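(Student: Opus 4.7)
The strategy is to show that the three assertions follow from one basic geometric dictionary: $2$-moderate embeddings of blobs in $A$ are the same data as families of functions in $\mathcal F^{\leq 2}$ parametrized by $\mathcal T(\hat v)$ that are transverse to the discriminant $\mathcal F^{=2}$, while cobordisms of such blobs (in the sense of Definition \ref{def1.10_A}) correspond exactly to homotopies of the parametrizing families within $\mathcal F^{\leq 2}$. I would first verify that $J^{\mathsf{imm}}$ and $J^{\mathsf{emb}}$ are well defined. Given $\a:X\to A$ with only $2$-moderate tangencies, the auxiliary function $z_\a$ provided by (\ref{eq.1.2}) yields $J_{z_\a}([\hat\g])=(z_\a|_{\hat\g})\circ(\phi|_{\hat\g})^{-1}$, whose zero multiplicities equal the combined tangency/crossing orders of $\a(\d X)$ with $\hat\g$, so the values lie in $\mathcal F^{\leq 2}$. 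Independence of $z_\a$ up to homotopy is immediate from convexity of the admissible space. To check bordism invariance, I would apply the identical construction in $A\times[0,1]$: a cobordism $B:W\to A\times[0,1]$ supplies a function $z_B$ on $A\times[0,1]$ whose vertical slices interpolate between $z_{\a_0}$ and $z_{\a_1}$, and conditions (4)--(5) of Definition \ref{def1.10_A} guarantee that each slice keeps its zeros of multiplicity $\leq 2$, producing a path in $\mathcal F^{\leq 2}$ between the two loops.

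Next I would establish the homomorphism property for the operation $\uplus$. Placing $\a_1(X_1)$ in $\R\times(0,0.5)$ and $\a_2(X_2)$ in $\R\times(0.5,1)$ makes the parametrized family $J_{\a_1\uplus \a_2}$ agree with $J_{\a_1}$ on $[0,0.5]$ (extended by $\mathbf 1$ on $[0.5,1]$) and with $J_{\a_2}$ on $[0.5,1]$ (extended by $\mathbf 1$ on $[0,0.5]$), up to a choice of $z$ equal to $1$ on the complementary strip. This is exactly the concatenation of based loops, which represents the sum in $\pi_1(\mathcal F^{\leq 2},\mathbf 1)$. Since every based loop is homotopic to one transverse to $\mathcal F^{=2}$, this shows $J^{\mathsf{imm}}$ and $J^{\mathsf{emb}}$ are group homomorphisms into $\Z$.

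For the isomorphism statement, I would construct the inverse through the correspondence $K$. Given a based loop $\b:([0,1],\d[0,1])\to(\mathcal F^{\leq 2},\mathbf 1)$ transverse to $\mathcal F^{=2}$, set
\begin{eqnarray*}
X_\b=\{(u,\theta)\in\R\times[0,1]:\b(\theta)(u)\leq 0\}.
\end{eqnarray*}
Transversality of $\b$ to $\mathcal F^{=2}$ implies that $X_\b$ is a compact smooth $2$-manifold with boundary $K(\b)$, and the inclusion $\a_\b:X_\b\hookrightarrow A$ is a regular embedding whose boundary has only simple crossings and quadratic tangencies to $\mathcal F(\hat v)$ at the points where $\b$ meets $\mathcal F^{=2}$. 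Applying $J$ to $\a_\b$ recovers $\b$ up to the choice of $z_{\a_\b}$, hence $[J^{\mathsf{emb}}\circ K]=\mathrm{id}$. Conversely, starting from an embedding $\a$ and reading off zero sets gives $K\circ J^{\mathsf{emb}}(\a)=\a$ literally, once $z_\a$ is taken as the defining function (this may require an isotopy in the interior of $X$, which produces a trivial cobordism, so the class is unchanged). Checking that homotopies of based loops through $\mathcal F^{\leq 2}$ produce cobordisms in the sense of Definition \ref{def1.10_A} is the analogue of the well-definedness argument applied to the homotopy parameter: a generic homotopy is transverse to the deeper strata and yields a $3$-fold $W$ with $\delta W$ whose slices are transverse to $\mathcal F^{=2}$ except at isolated Morse moments.

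The cylinder case is obtained by repeating the construction with $\mathcal T(\hat v)=S^1$; no basepoint is available so one lands in the free homotopy set $[S^1,\mathcal F^{\leq 2}]$, and the bijection $J^{\mathsf{emb}}$ and its inverse $K$ are defined by the same formulas. The main technical obstacle I expect is in the ``$K$ is well defined on homotopy classes'' step: one has to show that a generic homotopy of paths $\b_s$ in $\mathcal F^{\leq 2}$ can be arranged so that the union $\bigcup_{s,\theta}(\b_s(\theta)^{-1}(0),\theta,s)$ is the boundary of a compact $3$-manifold $W\subset A\times[0,1]$ satisfying all six conditions of Definition \ref{def1.10_A}, including the Morse condition on the projection to $[0,1]$. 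This amounts to a transversality/stratification argument on the discriminant of $\mathcal F^{\leq 2}$, for which one uses Arnold's stratification \cite{Ar1}: the top stratum $\mathcal F^{=2}$ is a smooth hypersurface, and a generic $2$-parameter family meets its deeper strata transversally in isolated points that correspond precisely to the allowed Morse critical events on $\delta W$.
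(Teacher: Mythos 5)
Your proposal is correct and takes essentially the same route as the paper: both directions rest on the dictionary between blob cobordisms and homotopies of loops in $\mathcal F^{\leq 2}$, obtained by extending the auxiliary function $z_\a$ over the cobordism $B:W\to A\times[0,1]$ one way, and by using transversality to the discriminant $\mathcal F^{=2}$ together with compactness of the sublevel sets $\b(\theta)^{-1}((-\infty,0])$ to produce the embedded surface $X(\b)$ and the $3$-fold $W(B)$ the other way. Your explicit verification of the $\uplus$-homomorphism property via concatenation of based loops is a detail the paper leaves implicit, but it does not change the substance of the argument.
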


\begin{proof}
The validation of this theorem is inspired by the \emph{graphic calculus} in \cite{Ar1} that converts homotopies of loops in the functional space $\mathcal F_{\leq 2}$ into cobordisms (surgeries) of doodles in $A$ with $2$-moderate tangencies to the foliation $\mathcal F(\hat v)$. With the help of this calculus, the isomorphism $\pi_1(\mathcal F^{\leq 2}, \mathbf 1) \approx \Z$ is established \cite{Ar1}.  In the present case, the loops are images of boundaries of oriented compact surfaces under their immersions/embeddings in $A$. Fig. \ref{fig1.8_X} \, shows our modification of this graphic calculus in action. A generator of $\pi_1(\mathcal F_{\leq 2}, \mathbf 1) \approx \Z$ is depicted in Fig. \ref{fig.1.9_X},  (b).


 \smallskip
  
If two $2$-moderate immersions, $\a_0: X_0 \to A$ and $\a_1: X_1 \to A$, are cobordant with the help of a $2$-moderate  immersion $B: W \to A \times [0, 1]$ as in Definition \ref{def1.10_A}, extending the function $z_{\a_0} \coprod z_{\a_1}: A \times (\{0\} \coprod \{1\}) \to \R$ to a smooth function $Z: A \times [0, 1] \to \R$ with similar properties delivers a homotopy between the loops $J_{z_{\a_0}}$  and  $J_{z_{\a_1}}$ (see (\ref{eq1.12})) in $\mathcal F_{\leq 2}$. 

On the other hand, if a path $\tau: ([0, 1],\, \d[0,1]) \to (\mathcal F^{\leq 2}, \mathbf 1)$ is transversal to the discriminant hypersurface $\mathcal F^{= 2} \subset \mathcal F^{\leq 2}$, then 
the locus  $$K(\tau) =_{\mathsf{def}}\bigcup_{\theta \in [0, 1]} \big(\tau(\theta)^{-1}(0),\, \theta \big) \subset \R^1 \times [0, 1]$$ is a smooth embedded curve in $A$ (see \cite{KSW1}, Lemma 3.4, 
for validation of this claim and its generalizations). Since the set $f^{-1}((-\infty, 0]))$ is compact for any $f \in \mathcal F_{\leq 2}$, the curve $K(\tau)$ is the boundary of a compact surface $$X(\tau) =_{\mathsf{def}}\bigcup_{\theta \in [0, 1]} \big(\tau(\theta)^{-1}\big((-\infty, 0])\big),\, \theta \big) \subset A.$$
Similar arguments hold for a homotopy $$B: \big([0, 1],\;  \d[0, 1]\big)\times [0, 1] \to (\mathcal F^{\leq 2}, \mathbf 1)$$ 
between two such maps $\tau_0$ and $\tau_1$, where $B$ is transversal to $\mathcal F^{= 2}$. 
The $3$-fold  $$W(B) =_{\mathsf{def}}\bigcup_{\theta \in [0, 1],\; t \in [0, 1]} \big(B(\theta)^{-1}\big((-\infty, 0])\big),\, \theta,\, t \big) \subset A \times [0,1]$$
delivers the cobordism (as in Definition \ref{def1.10_A}) between $X(\tau_0)$ and $X(\tau_1)$. Therefore, 
$$J^{\mathsf{emb}}: \mathbf B^{\mathsf{emb}}_{\mathsf{moderate \leq 2}}(A) \approx \pi_1(\mathcal F^{\leq 2}, \mathbf 1)$$ is a bijection (actually, a group isomorphism with respect to the operation $\uplus$). 

This fact has a curious implication for the map $\mathcal A$ from (\ref{eq.1.5_XYZ}):  since the map $J^{\mathsf{imm}}: \mathbf B^{\mathsf{imm}}_{\mathsf{moderate \leq 2}}(A) \to \pi_1(\mathcal F^{\leq 2}, \mathbf 1)$ is available, we can compose it with the inverse of the bijection $J^{\mathsf{emb}}$ to get a surjective map 
\begin{eqnarray}\label{eq.1.7_XYZ}
\mathcal R_J:\; \mathbf B^{\mathsf{imm}}_{\mathsf{moderate \leq 2}}(A) \to \mathbf B^{\mathsf{emb}}_{\mathsf{moderate \leq 2}}(A) \approx \Z,
\end{eqnarray} 
which serves as {\sf the right inverse} of the map (homomorphism) $\mathcal A$; that is, $\mathcal R_J\circ \mathcal A = \mathsf{id}$. In fact, for $A = \R \times [0,1]$, $\mathcal R_J$ is a group epimorphism.

The case $A = \R \times S^1$ is similar, but the operation $\uplus$ is not available. Instead, the opreration $\star$ is available, but its commutativity is in question.
\hfill
\end{proof}


We orient the surface $A$ so that the the $\theta$-coordinate,  corresponding to the trajectory space $\mathcal T(\hat v) = [0, 1]\,\text{ or }  S^1$, is the first, and the $u$-coordinate, corresponding to the multiplier $\R$, is the second. 
With this {\sf counterclockwise orientation} of $A$ being fixed, any immersion $\a: X \to A$ induces an orientation of the surface $X$, thus choosing orientations of each component of $\d X$. However, this induced orientation, may differer from the original orientation of $X$, an ingredient in the definition of $\mathbf{OB}^{\mathsf{imm}}_{\mathsf{moderate \leq 2}}(A)$ !
\smallskip

Given an  immersion $\a: (X, v) \subset (A, \hat v)$ such that $\a(\d X)$ has the properties as in (\ref{eq1.4_A}), we attach a {\sf new $\a$-dependent polarity} to each ``concave" point $a \in \d_2^+X(v)$: 
by definition, the polarity of $a$  is ``$\oplus$" if  $\a_\ast(\nu_a)$, where $\nu_a$ is the inner normal to $\d X$ at $a$, points in the direction of the coordinate $\theta$. Otherwise, the new polarity of $a$ is defined to be ``$\ominus$" (see Fig. \ref{fig.1.9_X}). Equivalently,  $a$  is of polarity ``$\oplus$" if crossing the critical value $\theta(a)$ in the positive direction increases the cardinality of the fiber of the map $\theta: \a(\d X) \to \mathcal T(\hat v)$ in the vicinity of $a$.

As a result, the loci $\{\d_j^+X(v)\}_j$ and  $\{\d_j^-X(v)\}_j$ acquire the additional polarities $\oplus$ and $\ominus$; all together, four flavors for the tangencies of $\a(\d X)$ to $\mathcal F(\hat v)$ are available: $\{(+, \oplus), (+, \ominus),\hfill\break  (-, \oplus), (-, \ominus)\}$. These flavors are independent of the orientations of $X$.
\smallskip 


When dealing with \emph{oriented} $2$-moderate blobs and doodles, it will be useful to introduce still another polarity (orientation) of tangency points, which will be denoted ``$\uparrow$" and ``$\downarrow$". Each point $a \in \d_2^+X(v)$ has polarity ``$\uparrow$" if the vector field $\hat v(a)$ is consistent with the orientation of $\a(\d X)$ at $a$.  Otherwise, the polarity of $a$ is ``$\downarrow$". Similar definition is applied to the points of the locus  $\d_2^-X(v)$. As a result, each point of the locus $\d_2X(v)$, with the help of $\a$, acquires the following eight flavors:
$$\{(+, \oplus, \uparrow),\; (+, \ominus, \uparrow),\; (-, \oplus, \uparrow),\; (-, \ominus, \uparrow), \;  (+, \oplus, \downarrow),\; (+, \ominus, \downarrow),\; (-, \oplus, \downarrow),\; (-, \ominus, \downarrow)\}.$$
Four of these eight flavors will play an essential role:
\begin{eqnarray}\label{eq.FOUR_FLAVORS}
\big\{\underbrace{(+, \oplus, \uparrow),\; (+, \ominus, \downarrow)}_{clockwise},\;  \underbrace{(+, \oplus, \downarrow),\; (+, \ominus, \uparrow)}_{counterclockwise}\big\},
\end{eqnarray}
the first pair occurring in the blobs oriented {\sf clockwise}, the second pair in the blobs oriented {\sf counterclockwise}.

Thus, with any embedded \emph{oriented} surface $\a: X \hookrightarrow A$, we associate two integers 
\begin{eqnarray}\label{eq.NEW_ORIENTED_invariants}
\mathsf K^\a &=_{\mathsf{def}} & \#\{\d_2^{+,\, \oplus,\, \downarrow}X(\hat v)\} - \#\{\d_2^{+,\, \ominus,\, \uparrow}X(\hat v)\}, \nonumber \\
\mathsf L^\a & =_{\mathsf{def}} & \#\{\d_2^{+, \,\oplus,\, \uparrow}X(\hat v)\} - \#\{\d_2^{+,\, \ominus,\, \downarrow}X(\hat v)\}    
\end{eqnarray}
that correspond to the blobs that are oriented {\sf counterclockwise} and {\sf clockwise}, respectively.

We introduce also an integer  
\begin{eqnarray}\label{eq.J_INVARIANT}
\quad \quad \quad \mathsf J^\a =_{\mathsf{def}} \#\{\d_2^{+,\, \oplus}X(\hat v)\} - \#\{\d_2^{+,\, \ominus}X(\hat v)\}\\
 \;\; (\text{in the oriented case, }  \mathsf J^\a = \mathsf K^\a + \mathsf L^\a). \nonumber
\end{eqnarray}
As the next theorem testifies, this number plays a role as a bordism invariant of  immersions of non-oriented blobs and as a tool for characterizing elements of the fundamental group $\pi_1( \mathcal F^{\leq 2},\, \mathbf 1)$. 



\begin{theorem}\label{th1.8} Let $A = \R \times [0, 1]$. 

$\bullet$ Any regular embedding $\a: (X, v) \to (A, \hat v)$ with only $2$-moderate tangencies of $\a(\d X)$ to $\mathcal F(\hat v)$ produces a map $J_{z_\a}$ as in Theorem  \ref{th.1.7}. Its homotopy class  $[J_{z_\a}] = \mathsf J^\a \cdot \kappa$, where $\kappa$ denotes a generator of $\pi_1( \mathcal F^{\leq 2},\, \mathbf 1) \approx \Z$ (shown in Fig. \ref{fig.1.9_X}, {\sf(b)}), and $\mathsf J^\a \in \Z$. 
\smallskip

The integer $\mathsf J^\a$ can be computed by the formula (\ref{eq.J_INVARIANT}).
\smallskip

$\bullet$ Any regular embedding $\a: (X, v) \to (A, \hat v)$ of an \emph{oriented} surface $X$ with only $2$-moderate tangencies  produces two maps $$K_{z_\a}: ([0, 1],\, \d[0,1]) \to (\mathcal F^{\leq 2}, \mathbf 1) \text{ and } L_{z_\a}: ([0, 1],\, \d[0,1]) \to (\mathcal F^{\leq 2}, \mathbf 1)$$ as in Theorem  \ref{th.1.7}; the first map is generated by the counterclockwise oriented blobs, and the second one by the clockwise oriented blobs. 
The homotopy classes of these maps are:  $[K_{z_\a}] = \mathsf K^\a \cdot \kappa$ and  $[L_{z_\a}] = \mathsf L^\a \cdot \kappa$.  \smallskip

The integers $\mathsf K^\a$ and $\mathsf L^\a$ can be computed by the formula (\ref{eq.NEW_ORIENTED_invariants}).\smallskip

$\bullet$ Moreover, $\mathsf K^\a$ and $\mathsf L^\a$ deliver an isomorphism
\begin{eqnarray}\label{eq.ORIENT_BLOB_EMBED}
 \mathsf K \times \mathsf L:\, \mathbf{OB}^{\mathsf{emb}}_{\mathsf{moderate \leq 2}}(A) \stackrel{\approx}{\longrightarrow} \Z \times \Z.
 \end{eqnarray}
%
\end{theorem}

\begin{proof} Let $d =_{\mathsf{def}} \max_{\hat\g} \# \{\hat\g \cap \a(\d X)\}$ be the maximal cardinality of the intersections of the $\hat v$-trajectories $\hat\g$ with the loop pattern $\a(\d X)$. Since $X$ bounds $\d X$, $d$ must be even.\smallskip

We start with the case of non-oriented blobs.

For any $2$-moderate immersion $\a: X \to A$, we pick an auxiliary function $z_\a : A \to \R$, adjusted to $\a$ as in (\ref{eq.1.2}).  By the previous arguments, this choice  of $z_\a$ produces the relative loop  $J_{z_\a}: ([0,1], \d[0, 1]) \to  \mathcal (F_d^{\leq 2}, \mathbf 1)$. 



 Consider the restriction $B^\delta =_{\mathsf{def}} B|: \delta W \to A \times [0, 1]$, followed by the obvious projection $\phi: A \times [0, 1] \to [0, 1]$. By an arbitrary $C^\infty$-small perturbation of $B$, we may assume that the composition 
 $f =_{\mathsf{def}} \phi \circ B^\delta$ is a Morse function. Since the space $\mathcal F^{\leq 2}$ is open in $\mathcal F$, compactly supported perturbations of smooth $2$-moderate functions or of their families remain $2$-moderate.  Therefore, we may assume that the cobordism $B$ is such that 
 $\phi \circ B^\delta$ is a Morse function.
 
 \begin{figure}[ht]
\centerline{\includegraphics[height=3.5in,width=4in]{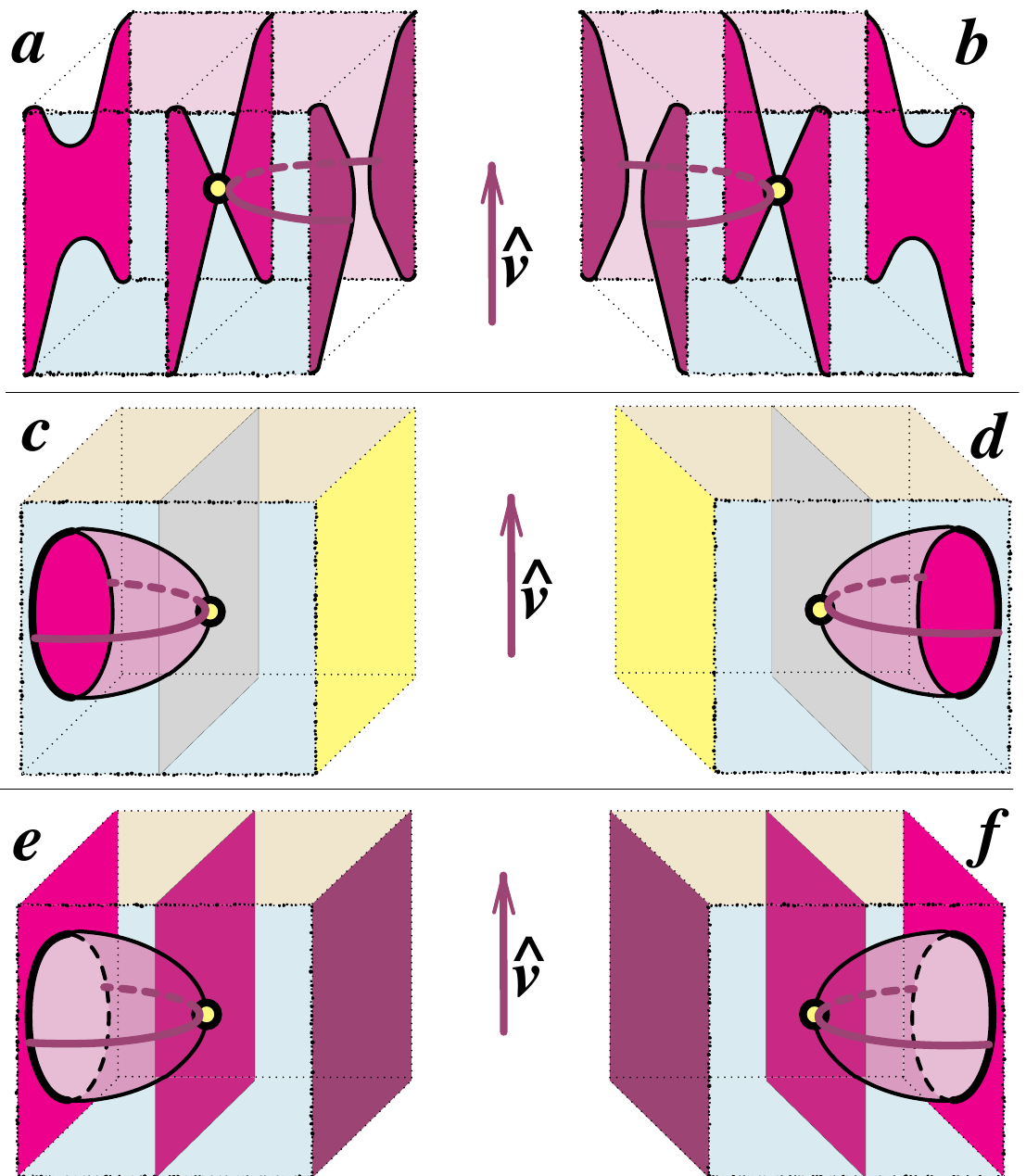}}
\bigskip
\caption{\small{Changing topology of slices $B^{-1}(A \times \{t\})$, as $t$ crosses a critical value $t_\star$ of the Morse function $f: \delta W \to [0, 1]$. Different shades correspond to different slices; each box is shown with $3$ slices. 
In $\mathsf{(a)}$ and $\mathsf{(b)}$, the portion of $B(W)$ over a small interval $[t_\star -\e, t_\star +\e]$ is a pair of solid pants. In $\mathsf{(c), (d)}$, this portion is a solid half-ball. In $\mathsf{(e)}$ and $\mathsf{(f)}$, it is the complements to such half-balls in the solid cube. The figure does not show the complements to solid pants, depicted in $\mathsf{(a)}$ and $\mathsf{(b)}$. Note the ``parabolic locus" (an arc of which is dashed) in $B(\delta W)$, where the vector field $\hat v$ is quadratically tangent to the surface $B(\delta W)$.}}
\label{fig.1.6XX}
\end{figure}

Since, by property (6) from Definition \ref{def1.10_A},  $B: W \to A \times [0, 1]$ is a submersion and  
$f: \delta W \stackrel{B^\delta}{\longrightarrow} A \times [0, 1] \to [0, 1]$ is a Morse function, as $t$ ranges in $[0,1]$, the topology of a regular slice $B^{-1}(A \times \{t\}) \subset W$ may change via a relative, 
elementary surgery only 
when $t$ crosses a critical value $t_\star$ of $f$.  The topology of the slice $B^{-1}(A \times \{t\}) \cap \delta W$ changes via an elementary surgery inside $\delta W$ as $t$ crosses $t_\star$. Thus, there are four types of local surgery, shown in Fig. \ref{fig.1.6XX}: two types of solid pants that correspond to $f$-critical points $(x_\star, t_\star) \in B(\delta W)$ of the Morse index $1$ and two types of ``indented" solids that correspond to $f$-critical points $(x_\star, t_\star) \in B(\delta W)$ of indexes $0$ and $2$.

It is crucial  for our arguments that the geometry of the elementary surgery blocks $W \subset A \times [0, 1]$  is such that the surface $\delta W \subset \d W$ has no tangency to the $\hat v^\bullet$-trajectories of order $3$ or higher (see the parabolic tangency curves with dashed arcs from Fig. \ref{fig.1.6XX}). \smallskip

Let us now describe an algorithm for the elementary moves (surgery)  (see Fig. \ref{fig1.8_X}) 
 that reduces a given pattern $X_0 = J_{z_\a}^{-1}((-\infty, 0]) \subset A$ to a pattern from \emph{the canonical set of patterns} $\{n\cdot K\}_{n \in \Z}$ (as in Fig. 1.9) 
 by a cobordism $B: W \to A \times [0, 1]$ as in Definition \ref{def1.10_A}, a cobordism which is a regular embedding. 
 
 By a small perturbation of $\a_0$, we may assume that no $\hat v$-trajectory has a combinatorial tangency pattern $\om$ with two or more $2$'s.
Then, we select the $\hat v$-trajectories $\hat\g_1, \ldots \hat\g_s$ that contain the tangency patterns $\om = (1, \dots, 1, 2, 1, \dots, 1)$, where the number of $1$'s that precede $2$  is \emph{odd} (which implies that the unique quadratic tangency point on $\hat\g_k$ resides in the \emph{interior} of the set $\hat\g_k \cap \a_0(X_0)$).  These trajectories correspond exactly to the ``concave" points of $\d_2^+(\a_0(X), \hat v)$. The trajectories are listed by the order of their images $[\hat\g_1], \ldots [\hat\g_s]$ in the oriented trajectory space $\mathcal T(\hat v)$. We notice that such trajectories $\hat\g_k$ come in two flavors: ``$\oplus$" and ``$\ominus$", depending on the polarity of the tangency of $\hat\g_k$ to $\a_0(\d X)$.

Then, for each adjacent pair $\hat\g_k, \hat\g_{k+1}$, we choose a $\hat v$-trajectory $\hat\g_k^\star$ in-between $\hat\g_k$ and $\hat\g_{k+1}$. 
Such $\hat\g_k^\star$ is traversal to $\a_0(X_0)$ and its combinatorial type $\om(\hat\g_k^\star)$ is a sequence of $1$'s of a length $2q$. The intersection  $\a_0(X_0) \cap \hat\g_k^\star$ is a disjoint union of closed intervals $I_{k, 1}, \ldots , I_{k, q}$. In the interior of each interval $I_{k, j}$ we pick a point $p_{k, j}^\star$ and will use it as the critical point for a surgery in $A \times [0,1]$ on the solid $W_0 = \a_0(X_0) \times [0, 0.5)$ (see Fig. \ref{fig.1.6XX}, {\sf(a)} and {\sf(b)}). 
If the polarity of $\hat\g_k$ is $\oplus$, we perform a surgery on $W_0$ as in Fig. \ref{fig.1.6XX}, {\sf(a)}, 
if the polarity of $\hat\g_k$ is $\ominus$, we perform a surgery on $W_0$ as in  Fig. \ref{fig.1.6XX}, {\sf(b)}, by attaching solid pants $Z_0$ to $W_0$. We denote by $W_1$ the resulting solid $W_0 \cup Z_0$ in $A \times [0, 1]$ and by $F_1: W_1 \to [0, 1]$ the obvious projection. 

It is essential that, since $F_1$ is a Morse function, the attached pants $Z_0$ do not violate property (5) from Definition \ref{def1.10_A}. Note that each elementary surgery of type {\sf(a)} introduces a pair of new $\hat v^\bullet$-tangent points to the slice $\d (F_1^{-1}(0.5))$; fortunately, none of new tangency pair is of the type $\om = (1, \dots, 1, 2, 1, \dots, 1)$, where the number of $1$'s that precede $2$ is odd (i.e., the pair belongs to $\d_2^-(F_1^{-1}(0.5))$) .

Now the preimage $F_1^{-1}(0.5)$ consists of several connected components $U_1, \ldots U_s$, each of which is a domain 
in $A \times \{0.5\}$ which contains at most a single trajectory of the combinatorial type $(\dots, 1, 2, 1, \dots)$, where the number of $1$'s that precede $2$  is odd and whose polarity is either $\oplus$ or $\ominus$. 

If a connected component $U$ is a ball with several holes, then applying the previous $1$-surgery to a segment of trajectory in $U$ that connects different connected components of $\d U$, we will convert $U$ into a topological $2$-ball without introducing  new trajectories of the combinatorial type $(\dots, 1, 2, 1, \dots)$ with the number of $1$'s that precede $2$ being odd.  Therefore, we may assume that all the connected components $U$ are $2$-balls.

Thus, we divide the balls $U$ in three types: {\sf(i)} $U$ intersects with a single trajectory of the polarity $\oplus$, {\sf(ii)} $U$ intersects with a single trajectory of the polarity $\ominus$, and {\sf(iii)} $U$ does not intersect with trajectories of the combinatorial type $(\dots, 1, 2, 1, \dots)$, where the number of $1$'s that precede $2$  is odd.  

\begin{figure}[ht]
\centerline{\includegraphics[height=2.2in,width=3in]{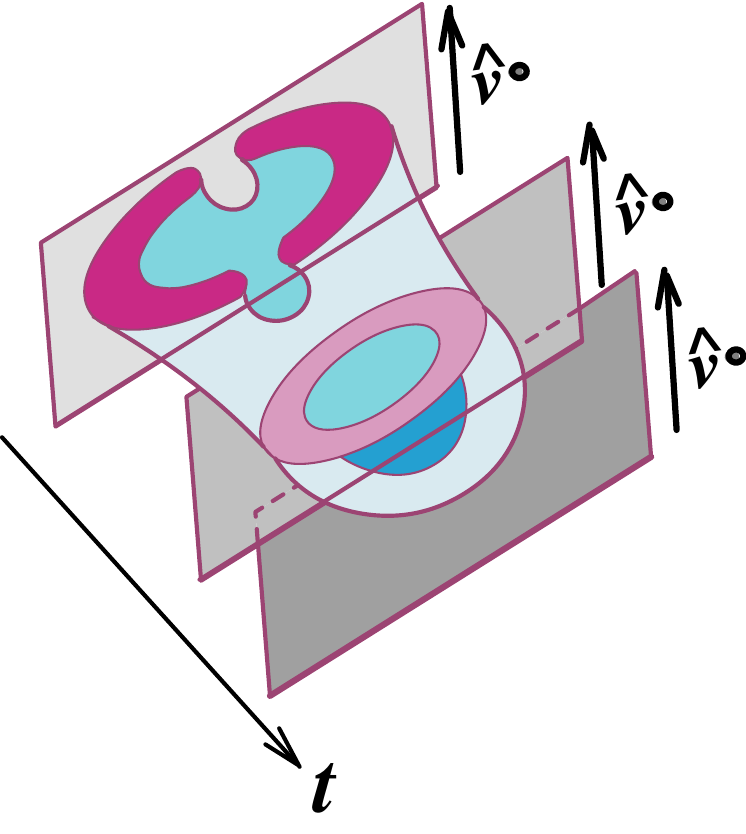}}
\bigskip
\caption{\small{Eliminating a pair of dark-shaded ``kidneys" with their horns facing each other by a surgery: the first surgery transforms the kidneys in the first slice into a ring in the second slice, and then the second surgery transforms the ring  into an empty slice.}}
\label{fig.1.7XX}
\end{figure}

Next, we eliminate the type {\sf(iii)} $\hat v^\bullet$-convex $2$-balls by a $3$-surgery on $W_1$ that amounts to attaching a few $3$-balls to the slice $F_1^{-1}(0.5)$ as in Fig. \ref{fig.1.6XX}, {\sf(c)}. We denote by $W_2$  the resulting solid in $A \times [0, 1]$, and by $F_2:  W_2 \to [0, 1]$ the obvious projection. We may assume that the $F_2$-image of $W_2$ in $[0, 1]$ resides in the interval $[0, 0.7)$. Because the balls of type {\sf(iii)} are $\hat v^\bullet$-convex, we may assume that $\delta W_2 \setminus \delta W_1$ avoids  $\hat v^\bullet$-tangencies of order $\geq 3$.  In particular, property (5) from Definition \ref{def1.10_A} is respected by these surgeries.

We are left now with balls $U$ in $F_2^{-1}(0.7)$ of types  {\sf(i)} and {\sf(ii)} (so called,  {\sf Arnold's `` kidneys"}). Each pair of $2$-balls of the distinct types  {\sf(i)} and {\sf(ii)} can be eliminated by an isotopy in $A \times \{0.7\}$ (which involves scaling and parallel shifts), followed by a surgery as shown in Fig. \ref{fig.1.7XX}. Indeed, if the horns of a pair of kidneys $U$ and $U'$ of different  polarities $\oplus$ and $\ominus$ are facing each other, then by attaching two solid pants we may surgery their union into a ring in a new slice. Then by attaching a relative handle $(D^2_+ \times S^1,\, D^1 \times S^1)$ to $A \times \{0\}$ in $A \times [0, 1]$, we eliminate the ring. If the horns of  $U$ and $U'$ of different polarities are facing in opposite directions, then by an isotopy in $A$ (as in Lemma \ref{lem.1.5XY}) we may switch their order along the $\mathcal T(\hat v)$-direction. The switching will reduce the situation to the case when the horns face each other. Again, this surgery will not introduce new points of the combinatorial types $(\dots 121 \dots)$, where the number of $1$'s that precede $2$  is odd. Thus, we will be left with a disjoint union of kidneys of the \emph{same} polarity $\oplus$ or $\ominus$. \smallskip

Let $\a: D^2 \to A$ be the regular imbedding realizing a $\oplus$-polarized kidney. By \cite{Ar}, under the map $J^\a$, each kidney is mapped to a non-contractible loop $\Phi_\a$ in $\mathcal F^{\leq 2}$, 
a generator of $\pi_1(\mathcal F^{\leq 2}, \mathbf 1) \approx \Z$. Therefore, no further simplifications among the remaining kidneys are possible.

Thus, the difference between the numbers of kidneys of types {\sf(i)} and {\sf(ii)} is an invariant of the bordism class, introduced in Definition \ref{def1.10_A}. This difference equals to the original difference $\#\{\d_2^{+, \oplus}X(v)\} - \#\{\d_2^{+, \ominus}X(v)\}$ between the numbers of $\hat v$-trajectories with polarities $\oplus$ and $\ominus$ and was preserved under all the surgeries that led to the final slice with the kidneys of the same polarity. Thus, the number
$
\mathsf J^\a =  \#\{\d_2^{+, \oplus}X(v)\} - \#\{\d_2^{+, \ominus}X(v)\}.
$ 
did not change under all the surgeries above.\smallskip
%
%
%
%


Now we are ready to investigate the $2$-moderate bordisms $\mathbf{O B}^{\mathsf{emb}}_{\mathsf{moderate \leq 2}}(A)$ of embedded \emph{oriented} blobs.
Each  connected blob can be oriented {\sf counterclockwise}, i.e., coherently with the fixed orientation of the ambient $A$, or {\sf clockwise},  i.e., opposite to the fixed orientation of $A$. We aim to show that the numbers $\mathsf K^\a, \mathsf L^\a$ from (\ref{eq.NEW_ORIENTED_invariants}) define an isomorphism (\ref{eq.ORIENT_BLOB_EMBED}).
\smallskip

It is possible to attach \emph{embedded/immersed} $1$-handles in $A$ only to  \emph{similarly oriented} blobs: the ``twisted $1$-handles" cannot be immersed in $A$. 

We notice  (see Fig. \ref{fig1.8_X}, the lower diagram) that attaching a narrow $1$-handle, whose core is transversal to the field $\hat v$, to similarly oriented blobs  contributes a new pair of points of the polarities $(+, \oplus, \downarrow)$ and  $(+, \ominus, \uparrow)$ (the counterclockwise oriented blobs), or a new pair of points of the polarities $(+, \oplus, \uparrow)$ and  $(+, \ominus, \downarrow)$ (the clockwise oriented blobs). Deleting a  $1$-handle contributes a new pair of points of the polarities $(-, \ominus, \uparrow)$ and  $(-, \oplus, \downarrow)$ (the counterclockwise oriented blobs), or $(-, \ominus, \downarrow)$ and  $(-, \oplus, \uparrow)$ (the clockwise oriented blobs); however, these changes affect only the convex loci $\d_2^-(\sim)$ which play secondary roles. Thus, all these surgeries do not change the values $\mathsf K^\a, \mathsf L^\a$. Similarly, adding or deleting a $2$-ball $D$ such that $\d_2^+D(\hat v) = \emptyset$, does not affect the values of $\mathsf K^\a, \mathsf L^\a$. \smallskip 

Thus, we will treat the counterclockwise and clockwise oriented blobs separately, recycling the our arguments in the non-oriented case.  Each embedded connected blob $\a(X_0)$ is a disk or a disk with a number of holes. By deleting narrow $1$-handles from $\a(X_0)$ as in Fig. \ref{fig.1.6XX}, $\mathsf{(a)}$, or as in Fig. \ref{fig1.8_X}, we replace, via $2$-moderate bordisms, each counterclockwise/clockwise oriented blob with holes by a similarly oriented topological $2$-ball $D_0$.  In the process, we keep the original values $\mathsf K^\a, \mathsf L^\a$. The tangency of $\d D_0$ to $\hat v$ is still $2$-moderate.

Now all the connected components of the new embedded surface $Y \subset A$ are $2$-balls. As in the non-oriented case, by deleting $1$-handles, each ball can be split into a number of balls, each of which has a single point from $\d_2^+(\sim)$ at most.  By a $2$-surgery, we delete the balls which are convex with respect to $\hat v$ (i.e., they do not have singletons from $\d_2^+(\sim)$). Now we a left with the balls that have a single point of the polarity from the list (\ref{eq.FOUR_FLAVORS}). Some of them are oriented counterclockwise, others clockwise.

Thus, 
\begin{eqnarray} \label{eq.EMB_ORIENT}
 \mathsf K \times \mathsf L:\, \mathbf{OB}^{\mathsf{emb}}_{\mathsf{moderate \leq 2}}(A) \longrightarrow \Z \times \Z. 
\end{eqnarray}
is an epimorphism. 

Assuming that $\mathsf K^\a = 0 = \mathsf L^\a$ for some $\a$, we see that this property allows to pair all the counterclockwise/clockwise oriented kidneys so that each pair by $1$-surgery can be transformed into a ring and then eliminated as in Fig. \ref{fig.1.7XX}. Therefore, $\mathsf K \times \mathsf L$ is a monomorphism. 
\hfill
\end{proof}

 \begin{figure}[ht]
\centerline{\includegraphics[height=2.2in,width=4.2in]{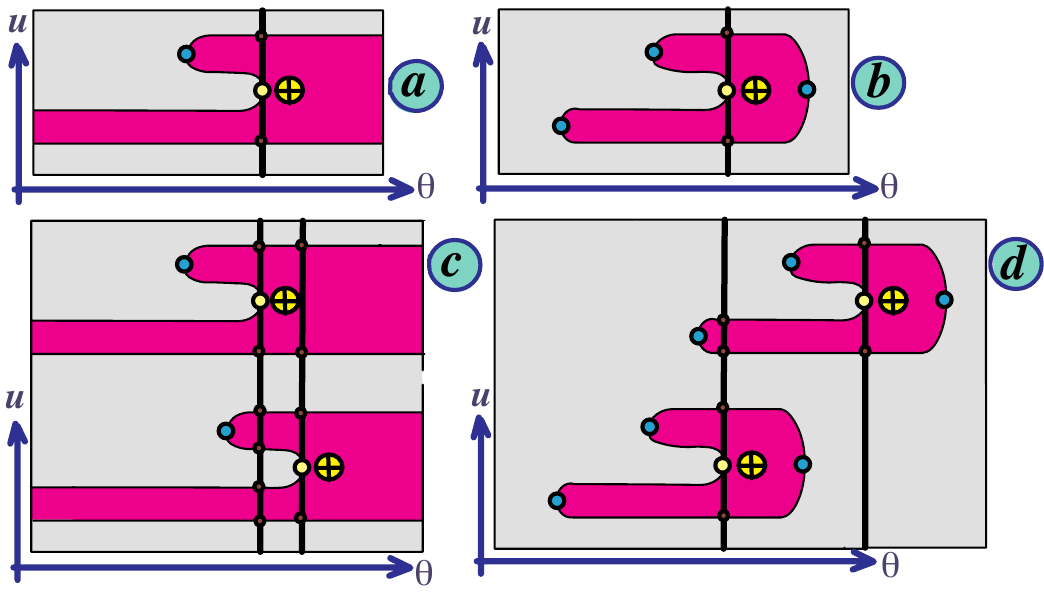}}
\bigskip
\caption{\small{Two portraits of a generator $\kappa \in \pi_1(\mathcal F^{\leq 2}, p)$, where the base point $p \in \mathcal F^{\leq 2}$ is modeled after the polynomial $p(u) = u^4- 1$ in diagram {\sf (a)} and by $p(u) = u^4 +1$ in diagram {\sf(b)}. Diagrams {\sf(c)} and {\sf(d)} portray $2\kappa$. In diagrams {\sf(a)} and {\sf(c)} that represent the case $\mathcal T(\hat v) = S^1$, the left and the right edges of the rectangle should be identified so that the shaded regions match.  
Note the polarity $\oplus$ of the tangent $\hat v$-trajectories with the combinatorial pattern $\om = (\dots 121 \dots)$, where the number of $1$'s that precede $2$  is odd.}}
\label{fig.1.9_X}
\end{figure}

As in the case of non-oriented blobs, using formulas (\ref{eq.NEW_ORIENTED_invariants}), we get a surjective map 
\begin{eqnarray}\label{eq.INVERSE_to_IMM_to_EMB}
\mathcal R_{\mathsf K, \mathsf L}:\; \mathbf{OB}^{\mathsf{imm}}_{\mathsf{moderate \leq 2}}(A) \to   \Z \times \Z \approx \mathbf {OB}^{\mathsf{emb}}_{\mathsf{moderate \leq 2}}(A),
\end{eqnarray} 
which serves as {\sf the right inverse} to the obvious homomorphism
\[
 \mathbf {OB}^{\mathsf{emb}}_{\mathsf{moderate \leq 2}}(A) \to 
\mathbf{OB}^{\mathsf{imm}}_{\mathsf{moderate \leq 2}}(A).
\]
Therefore, $\mathbf{OB}^{\mathsf{imm}}_{\mathsf{moderate \leq 2}}(A)$ contains canonically  $\Z \times \Z$.\smallskip

\noindent {\bf Remark 2.1.}
The number $c^+(v) =_{\mathsf{def}} \#\{\d_2^{+, \oplus}(v)\} +  \#\{\d_2^{+, \ominus}(v)\}$ may be interpreted as the \emph{complexity} of the vector field $v$ on $X$ \cite{K4}, \cite{K7}. 
We notice that 
$$c^+(v)  \geq \quad |\mathsf J^\a | =_{\mathsf{def}}  | \#\{\d_2^{+, \oplus}(v)\} - \#\{\d_2^{+, \ominus}(v)\}|. $$  

It is somewhat surprising  that the invariant $\mathsf J^\a =  \#\{\d_2^{+, \oplus}(v)\} - \#\{\d_2^{+, \ominus}(v)\}$ reflects more the local topology of the embedding $\a$ (or of the field $v= \a^\ast(\hat v)$)  than the global topology of the surface $X$: in fact,  any integral value of $\mathsf J^\a$ can be realized by a traversally generic field $v$ on a $2$-ball $D$ which even admits a convex envelop! A portion of  the boundary $\d D$ looks like a snake with respect to the field $\hat v$ of the envelop. \smallskip

For any $X$, the effect of deforming a portion of the boundary $\d X \subset A$ into a snake is equivalent to adding several spikes (an edge and a pair of a univalent and a trivalent verticies) to the graph $\mathcal T(v)$, the space of $v$-trajectories. Evidently, these operations do not affect $H_1(\mathcal T(v); \Z) \approx H_1(X; \Z)$.

For example, for $\a$ as in Figure \ref{fig.1.4A}, 
$\mathsf J^\a =0$. If we subject $\a$ to an isotopy in $A$ that introduces a snake-like pattern of Fig. \ref{fig.1.9_X}, 
{\sf(a)}, then for the new immersion $\a'$, the invariant $\mathsf J^{\a'} = 1$.\smallskip

In contrast,  the number $\#\{\d_2^{+, \oplus}(v)\} + \#\{\d_2^{+, \ominus}(v)\}$ has a topological significance for $X$. If $X$ is the compliment to $k$ disjoint balls in a closed orientable surface with $g$ handles, then by \cite{K7}, Lemma 1.1, $\#\{\d_2^{+, \oplus}(v)\} + \#\{\d_2^{+, \ominus}(v)\} \geq 4g-4 +2k.$ 
 \hfill $\diamondsuit$

\begin{figure}[ht]
\centerline{\includegraphics[height=3.5in,width=3.5in]{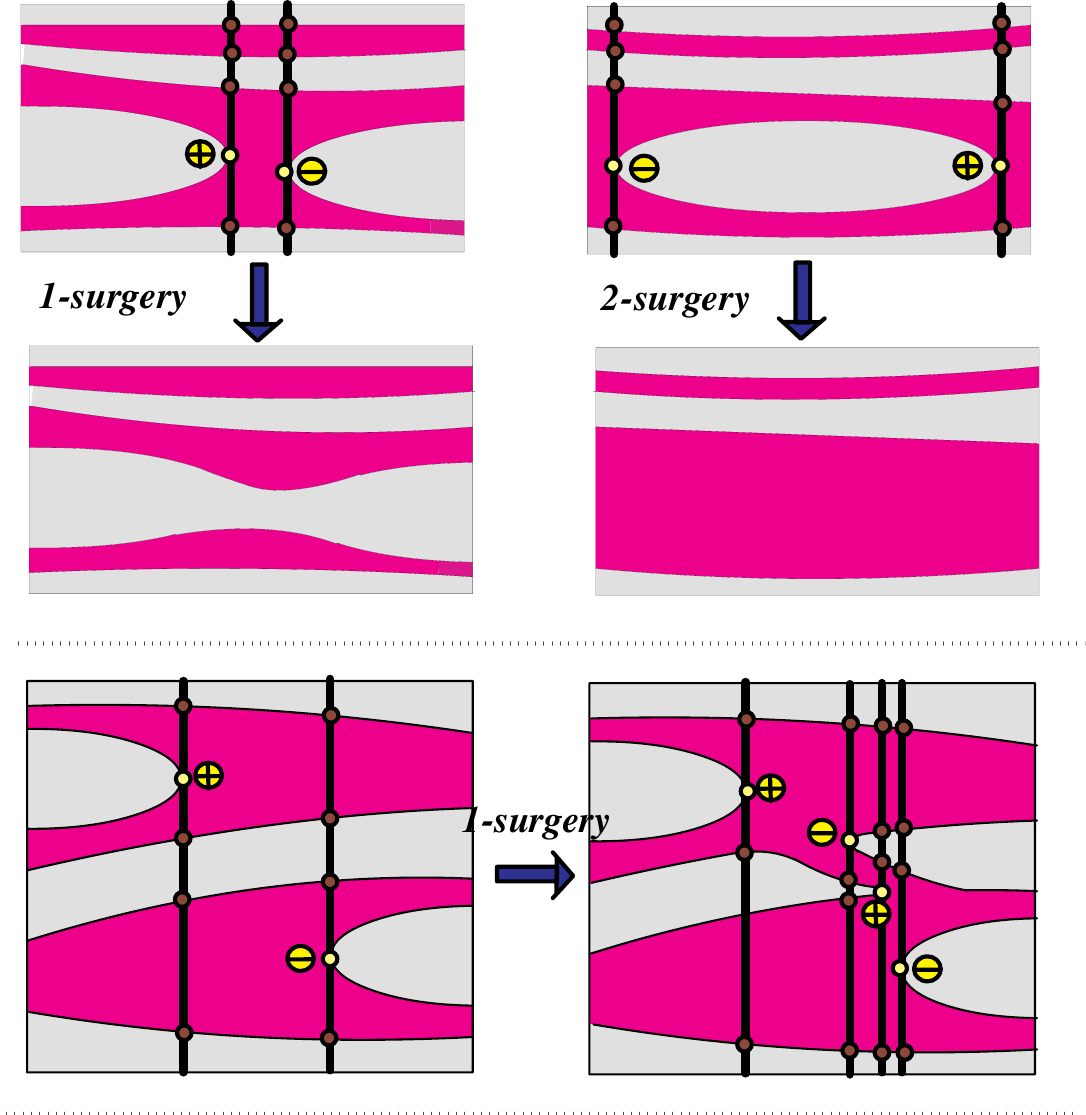}}
\bigskip
\caption{\small{The region $\a(X)$ in the cylinder $A = \R \times S^1$ is shaded, the vector field $\hat v$ is vertical.  Elementary cancellations,  via surgery in $A$, of a pair  tangencies of $\a(\d X)$ to $\hat v$ of opposite polarities $\oplus, \ominus$ (the upper diagrams). Increasing the local connectivity of the region $\a(X)$ between two tangent trajectories of opposite polarities by a $1$-surgery in $A$ (the lower diagram). This operation introduces a new pair of points of opposite polarities $\oplus, \ominus$.}}
\label{fig1.8_X}
\end{figure}
\smallskip

\section{Invariants that distinguish between bordisms of immersions and embeddings}
  
Recall that $\mathcal A^\times: \mathbf B^{\mathsf{\times imm}}_{\mathsf{moderate \leq 2}}(A) \to \mathbf B^{\mathsf{imm}}_{\mathsf{moderate \leq 2}}(A)$ is a bijection. Therefore, without lost of generality, we deal now with immersions under additional assumption (\ref{eq.extra_transversality}): we require that their self-intersections are {\sf transversal}. We adopt the notations from Definition \ref{def1.10_A}.\smallskip

\begin{figure}[ht]
\centerline{\includegraphics[height=1.1in,width=3.5in]{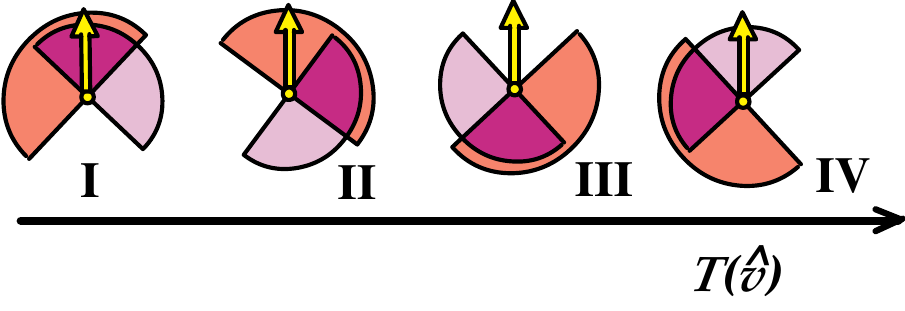}}
\bigskip
\caption{\small{Four configurations {\sf I, II, III, IV} in the vicinity of a point $a \in A$ where two branches of $\a(\d X)$ intersect transversally. The figure shows the $\a$-images of  neighborhoods in $X$ of the two points  $b_1, b_2 \in \a^{-1}(a) \cap \d X$. Note the position of the vector $\hat v(a)$ relative to the darkly shaded sector. The trajectory space $\mathcal T(\hat v)$ is oriented. 
}}
\label{fig.1.9XX}
\end{figure}

Consider an immersion $\a: X \to A$ with transversal  self-intersections of $\a(\d X)$ and no multiple self-intersections of more than two local branches. At each self-intersection $a \in  \a(\d X)$, there are exactly two points $a', a'' \in \d X$ such that $\a(a') = \a(a'') = a$. Then we take two small neighborhoods $D_+(a')$ and $D_+(a'')$ of $a'$ and $a''$, respectively, both diffeomorphic to a half-disk. The linearizations of their $\a$-images produces two preferred half-spaces, $H_{a'}$ and $H_{a''}$, in the tangent space $T_aA$. Put $K_{a', a''} = H_{a'} \cap H_{a''}$. We denote by $K_{a', a''}^\vee$ the image of $K_{a', a''}$ under the central symmetry in $T_aA$  (see Fig. \ref{fig.1.9XX}).

Using the sectors $K_{a', a''}$ and $K_{a', a''}^\vee$, we will divide the self-intersections $\{a\}$ into four types: {\sf type} {\bf  I} occurs when the vector $\hat v(a)$ points in the interior of the sector $K_{a', a''}$, {\sf type} {\bf III} occurs when the vector $\hat v(a)$ points in the interior of the sector $K_{a', a''}^\vee$; {\sf type} {\bf II} and {\sf type} {\bf IV} arise when $\hat v(a)$ points in the interior of the complimentary to $K_{a', a''} \cup K_{a', a''}^\vee$ sectors. {\sf Type} {\bf  II} corresponds to the case when the sector $K_{a', a''}$ is on the right of the line $\g_a$, while {\sf type} {\bf IV} corresponds to the case when the sector $K_{a', a''}$ is on the left of the line $\g_a$. 
Note that the types $\mathbf{II}$ and $\mathbf{IV}$ are ``crudely symmetric" with respect to a reflection in $A$ that has the $\hat v$-trajectory through $a$ as a line of symmetry $\g_a$.\smallskip

Consider a small disk $D$, centered on a crossing point $a$. We consider a diffeotopy $\{\Psi_\phi: A \to A\}_\phi$ whose final stage is the turn on a given angle $\phi_\star$ inside the concentric disk $D' \subset D$ and the identity outside $D$. Note that using $\Psi_\phi$ as a dial at $a$, we can change the type \{{\bf I, II, III, IV}\} of $\Psi_{\phi_\star}(\a(\d X))$ at $a$ at will. However, $\a_{\phi_\star}$, the composition  of $\a$ with $\Psi_{\phi_\star}$, is not $2$-moderately cobordant to $\a$: in the process, the curves $\a_\phi(\d X)$ may develop inflection points (cubic tangencies) to the foliation $\mathcal F(\hat v)$. Moreover, the $2$-moderate $\a_{\phi_\star}(\d X)$ may develop inside $D$ new quadratic tangencies to $\mathcal F(\hat v)$. Thus, there is a subtle interplay between the count of crossings of a particular type from \{{\bf I, II, III, IV}\} and the invariants of the type $J^\a$.

Let us find out which of these four types \{{\bf I, II, III, IV}\} or their combinations are invariant under non-oriented cobordisms of immersions with $2$-moderate tangencies.

We start with a key observation that guides us in this section.

Given a cobordism-immersion $B: W \to A \times [0, 1]$ between two immersions, $\a_0: X_0 \to A \times \{0\}$ and $\a_1: X_1 \to A \times \{1\}$,  with transversal self-intersection of $B(\delta W)$ of multiplicity $2$ at most, consider the curve $\ell$ where the surface $B(\delta W) \subset A \times [0, 1]$ self-intersects transversally.  Either $\ell$ is:  

{\sf(a)} a simple segment in $A \times [0, 1]$ that connects a pair of crossings of $\a_0(\d X_0)$, or 

{\sf(b)} a pair of crossings of $\a_1(\d X_1)$, or 

{\sf(c)} a pair of crossings, one of which belongs to $\a_0(\d X_0)$ and the other one to $\a_1(\d X_1)$, or 

{\sf(d)} a simple loop in $A \times (0, 1)$. 

The multiplicity of each intersection of $\ell$ with a $\hat v^\bullet$-trajectory $\hat\g \subset A \times [0, 1]$ is at least $2$, and, according to Definition \ref{def1.10_A}, cannot exceed $2$. Therefore, each $\hat v^\bullet$-trajectory must be \emph{transversal} to $\ell$: otherwise, the multiplicity of the intersection $\ell \cap \hat\g$, where $\hat\g$ is tangent to $\ell$, exceeds $2$. 

For each point $x \in \ell$, the two $\ell$-localized branches, $B(\delta W_1)$ and $B(\delta W_2)$, of $B(\delta W)$ that intersect along $\ell$ divide the tubular neighborhood $U_\ell$ of $\ell$ into four chambers $S_x^{\mathbf{I}}, S_x^{\mathbf{II}}, S_x^{\mathbf{III}}, \hfill\break S_x^{\mathbf{IV}}$, where each point from the interior of $S_x^{\mathbf{I}}$ has two $\a$-preimages  in the vicinity of $\delta W_1 \cup \delta W_2$ in $W$,  each point from the interior of $S_x^{\mathbf{III}}$ has no $\a$-preimages in the vicinity of $\delta W_1 \cup \delta W_2$ in $W$, and each point from the interior of $S_x^{\mathbf{II}} \cup S_x^{\mathbf{IV}}$ has one $\a$-preimage in the vicinity of $\delta W_1 \cup \delta W_2$ in $W$. 
Thanks to the transversality of the vector field $\hat v^\bullet$ to the curve $\ell$, the vector field  must point into the interior of one of the chambers; i.e., $\hat v^\bullet$ is not tangent to the branches $B(\delta W_1)$ and $B(\delta W_2)$. In $3D$, we cannot distinguish between the chambers $S_x^{\mathbf{II}}$ and $S_x^{\mathbf{IV}}$ without picking an orientation of $\ell$. 
Thus, $S_x^{\mathbf{I}}$,  $S_x^{\mathbf{III}}$, and $S_x^{\mathbf{II}} \cup S_x^{\mathbf{IV}}$ are geometrically distinct. 

Now $\hat v^\bullet$ helps to define the three types of $\ell$ (not to be confused with the four types {\sf(a)}-{\sf(d)}): if $\hat v^\bullet(x)$ points into $S_x^{\mathbf{I}}$ or $S_x^{\mathbf{III}}$, we say that the curve $\ell$ is of types ${\mathbf{I}}$ or ${\mathbf{III}}$, respectively. Otherwise, $\ell$ is of the ``mixed type" ${\mathbf{II}}\, \&\,  {\mathbf{IV}}$.

In all cases {\sf(a)}-{\sf(d)}, $\hat v^\bullet$ preserves  the three champer types ${\mathbf{I}}, {\mathbf{III}}, {\mathbf{II}}\, \&\, {\mathbf{IV}}$ along $\ell$. In fact, the case {\sf(d)}, in which $\ell$ is a loop, is irrelevant for what follows. 

Let $x \cup y = \d\ell$. Crucially, in cases {\sf(a)}-{\sf(b)} the type $S_x^{\mathbf{II}}$ turns into the type $S_y^{\mathbf{IV}}$, and $S_x^{\mathbf{IV}}$ turns into the type $S_y^{\mathbf{II}}$, while the types $S_x^{\mathbf{I}}$  and $S_y^{\mathbf{I}}$, $S_x^{\mathbf{III}}$  and $S_y^{\mathbf{III}}$ are the same.  

Therefore, in cases {\sf(a)}-{\sf(b)} we get the following possible combinations: $S_x^{\mathbf{I}}$ and $S_y^{\mathbf{I}}$, or $S_x^{\mathbf{II}}$ and $S_y^{\mathbf{IV}}$, or $S_x^{\mathbf{III}}$ and $S_y^{\mathbf{III}}$, or $S_x^{\mathbf{IV}}$ and $S_y^{\mathbf{II}}$. In the case {\sf(c)}, all the types at $x$ and $y$ are the same.

Given an immersion $\a: X \to A$ as in Definition \ref{def1.10_A}, let us denote by $\rho_{\mathbf{I}}(\a), \rho_{\mathbf{II}}(\a), \rho_{\mathbf{III}}(\a)$, $\rho_{\mathbf{IV}}(\a)$ the number of crossings of $\a(\d X)$ of the types $\bf{I, II, III, IV}$, where the type of a crossing is determined by the configuration of the sector, formed by the two intersecting domains, and the vector $\hat v$ at the crossing.

 For example, in Fig. \ref{fig.1.4A}, all the four crossings of $\a(\d X)$ are of the different types $\mathbf{I, II, III, IV}$; that is, $\rho_{\mathbf{I}}(\a) =1,\, \rho_{\mathbf{II}}(\a) =1,\, \rho_{\mathbf{III}}(\a) =1,\, \rho_{\mathbf{IV}}(\a) =1$.




These pairings between different crossings of $\a_0(\d X_0)$ and $\a_1(\d X_1)$, which are delivered by the curves $\{\ell\}$ of types {\sf (a), (b), (c)}, lead directly to the following conclusion.
\begin{lemma}\label{lem.1.5_XY}
If two $2$-moderate immersions $\a_0: X_0 \to A$ and $\a_1: X_1 \to A$ are cobordant in $\mathbf B^{\mathsf{\times imm}}_{\mathsf{moderate \leq 2}}(A) \approx \mathbf B^{\mathsf{imm}}_{\mathsf{moderate \leq 2}}(A)$, then
\begin{eqnarray}\label{eq.1.8 XYZ} 
\rho_{\mathbf{I}}(\a_0) & \equiv & \rho_{\mathbf{I}}(\a_1) \mod 2, \\
\rho_{\mathbf{III}}(\a_0) & \equiv & \rho_{\mathbf{III}}(\a_1) \mod 2, \nonumber \\
\rho_{\mathbf{II}}(\a_0) - \rho_{\mathbf{IV}}(\a_0) & = & \rho_{\mathbf{II}}(\a_1) - \rho_{\mathbf{IV}}(\a_1). \quad \quad \diamondsuit \nonumber
 \end{eqnarray}
\end{lemma}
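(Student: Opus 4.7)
The plan is to exploit the $1$-dimensional self-intersection locus $L$ of $B(\delta W) \subset A \times [0,1]$. After invoking the bijection $\mathcal A^\times$ to assume transversal self-intersections, $L$ is a compact $1$-manifold whose boundary $\partial L$ consists precisely of the transversal crossings of $\alpha_0(\partial X_0) \sqcup \alpha_1(\partial X_1)$. Thus the components of $L$ induce a pairing on this finite set of crossings; components of loop type (d) contribute no endpoints and may be ignored.

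First I would use the transversality of $\hat v^\bullet$ to each component $\ell$ (already established in the excerpt via the $\leq 2$ tangency constraint) to conclude that the three chamber categories $I$, $III$, and the mixed $II \& IV$ are \emph{locally constant} along $\ell$. The reason is that these categories are distinguished purely by which of the open chambers $S_x^I$, $S_x^{III}$, or $S_x^{II} \cup S_x^{IV}$ contains $\hat v^\bullet(x)$, and the two branches of $B(\delta W)$ vary smoothly along $\ell$ while $\hat v^\bullet$ never lies tangent to them.

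Next I would translate the endpoint-type bookkeeping of the excerpt into counts. For category $I$: type (a) components pair two $I$-crossings of $\alpha_0$, type (b) components pair two $I$-crossings of $\alpha_1$, and type (c) components pair one $I$-crossing on each side. Writing $a_I, b_I, c_I$ for the corresponding component counts, $\rho_I(\alpha_0) = 2a_I + c_I$ and $\rho_I(\alpha_1) = 2b_I + c_I$, hence $\rho_I(\alpha_0) \equiv \rho_I(\alpha_1) \pmod 2$. The same argument applies verbatim to $\rho_{III}$. For the mixed category, type (a) pairs a $II$-crossing of $\alpha_0$ with a $IV$-crossing of $\alpha_0$ (the polarity flip), so it contributes equally to $\rho_{II}(\alpha_0)$ and $\rho_{IV}(\alpha_0)$ and cancels in their difference; likewise for type (b) on the $\alpha_1$-side. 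Type (c) components preserve the $II/IV$ label, pairing $II$ with $II$ or $IV$ with $IV$ across the two slices. If $p_{II}$, $p_{IV}$ count these two subtypes, then
\[
\rho_{II}(\alpha_0) - \rho_{IV}(\alpha_0) \;=\; p_{II} - p_{IV} \;=\; \rho_{II}(\alpha_1) - \rho_{IV}(\alpha_1).
\]

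The main obstacle is the polarity-flip assertion itself: along a type (a) or type (b) component the $II/IV$ labels at the two endpoints are \emph{swapped}, while along a type (c) component they \emph{match}. This is the heart of the matter and requires tracking how the preferred orientation of $\mathcal T(\hat v)$, as read from the normal frame of the pair of local branches intersecting along $\ell$, behaves under traversal of $\ell$: a passage from a boundary slice back to the same boundary slice reverses this reading, whereas a passage from $\{t=0\}$ to $\{t=1\}$ does not. I would verify this by parametrizing $\ell$ near each boundary endpoint and comparing the induced normal-frame orientations inside the boundary slice to the fixed orientation of $A$.
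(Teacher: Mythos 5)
Your proposal is correct and follows essentially the same route as the paper: the paper's argument (given in the discussion preceding the lemma) likewise uses the transversal double-point curve $\ell$ of $B(\delta W)$, its classification into the cases {\sf(a)}--{\sf(d)}, the constancy of the chamber types $I$, $III$, $II\&IV$ along $\ell$ forced by the $2$-moderate tangency constraint, and the key observation that the $II/IV$ labels swap at the two endpoints in cases {\sf(a)}--{\sf(b)} but match in case {\sf(c)}. You merely make explicit the endpoint-counting ($\rho_I = 2a_I + c_I$, etc.) that the paper summarizes as ``these pairings lead directly to the conclusion.''
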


Thus, we produced three numerical invariants that have a potential to distinguish between the elements of the bordism $\mathbf B^{\mathsf{imm}}_{\mathsf{moderate \leq 2}}(A)$, modulo the ones that are in the image of the map $\mathbf B^{\mathsf{emb}}_{\mathsf{moderate \leq 2}}(A) \to \mathbf B^{\mathsf{imm}}_{\mathsf{moderate \leq 2}}(A).$

\begin{figure}[ht]
\centerline{\includegraphics[height=1.7in,width=3in]{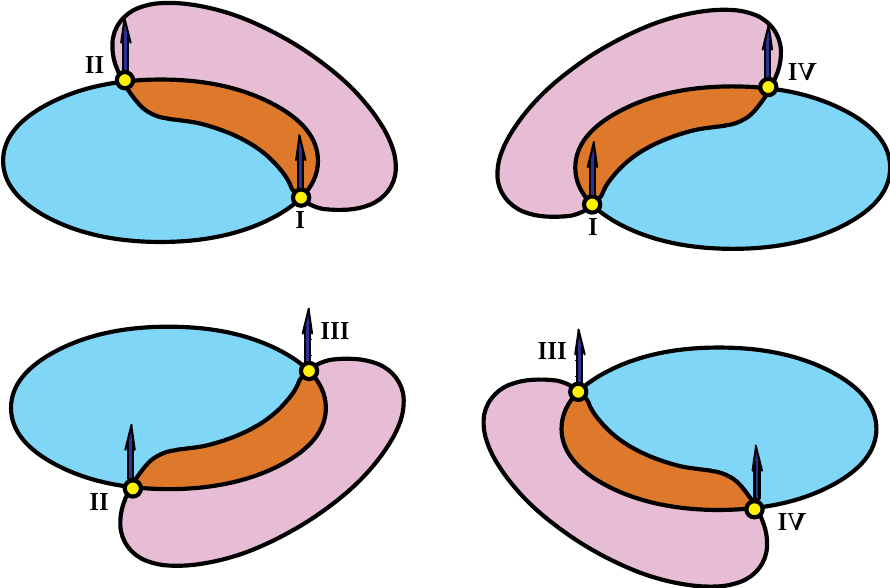}}
\bigskip
\caption{\small{Basic immersions of two disks in the surface $A$ and the intersection patterns $\mathbf{I, II, III, IV}$ from Fig. \ref{fig.1.9XX} they generate. As elements of $\mathbf B^{\mathsf{imm}}_{\mathsf{moderate \leq 2}}(A)$, the immersion on the left is minus the corresponding immersion on the right.}}
\label{fig.1.13XX}
\end{figure}

\begin{proposition}\label{prop.1.EXACT_SEQ_BLOBS}
For $A = \R \times [0, 1]$, the invariants $\rho_{\mathbf{I}}(\a),\; \rho_{\mathbf{III}}(\a) \in \Z_2,\; \rho_{\mathbf{II}}(\a) - \rho_{\mathbf{IV}}(\a) \in \Z$ define a homomorphism 
$$\mathcal I \rho :\; \mathbf B^{\mathsf{imm}}_{\mathsf{moderate \leq 2}}(A)/ \mathbf B^{\mathsf{emb}}_{\mathsf{moderate \leq 2}}(A) \to \Z_2 \times \Z_2 \times \Z$$
whose image is an abelian subgroup $\mathcal M \approx \Z_2 \times \Z$ of index $2$. 
 The monomorphism $\phi: \mathcal M \hookrightarrow \Z_2 \times \Z_2 \times \Z$ is delivered by the diagonal map $\Z_2 \to \Z_2 \times \Z_2$ on $\Z_2$ and $\phi(\Z) = 2\Z$. 
\end{proposition}

\begin{proof} The argument is based on the proof of Lemma \ref{lem.1.5_XY} and on formulas (\ref{eq.1.8 XYZ}).

Examining the types of crossings in the four diagrams from Fig. \ref{fig.1.13XX} that show the four immersions $\a_1, \a_2, \a_3, \a_4$, we get: 
 %
 \begin{eqnarray}\label{eq.1.9_XY}
 \rho_{\mathbf{I}}(\a_1) = 1, \; & \rho_{\mathbf{III}}(\a_1) = 0,\; & \rho_{\mathbf{II}}(\a_1) - \rho_{IV}(\a_1) = 1, \\
 \rho_{\mathbf{I}}(\a_2) = 1, \; & \rho_{\mathbf{III}}(\a_2) = 0,\; & \rho_{\mathbf{II}}(\a_2) - \rho_{\mathbf{IV}}(\a_2) = -1, \nonumber \\
 \rho_{\mathbf{I}}(\a_3) = 0, \; & \rho_{\mathbf{III}}(\a_3) = 1,\; & \rho_{\mathbf{II}}(\a_3) - \rho_{\mathbf{IV}}(\a_3) =  1,\nonumber \\
 \rho_{\mathbf{I}}(\a_4) = 0, \; & \rho_{\mathbf{III}}(\a_4) = 1,\; & \rho_{\mathbf{II}}(\a_4) - \rho_{\mathbf{IV}}(\a_4) = - 1. \nonumber 
 \end{eqnarray}

Note that in $\mathbf B^{\mathsf{imm}}_{\mathsf{moderate \leq 2}}(A)$,  $\a_2 = - \a_1$ and $\a_4 = - \a_3$. \smallskip

 Also, for $\a: X \to A$ from Fig. \ref{fig.1.4A}, where $X$ is a torus with a hole,  we have $\rho_{\mathbf{I}}(\a) = 1$, $\rho_{\mathbf{III}}(\a) = 1$, and $\rho_{\mathbf{II}}(\a_4) - \rho_{\mathbf{IV}}(\a_4) = 0$. 
 \smallskip
 
Consider now just these three immersions: $\a_1, \a_3$, and $\a$.
By linear algebra, applied to $\rho_{\mathbf{I}}(\sim), \rho_{\mathbf{III}}(\sim)$, and $\rho_{\mathbf{II}}(\sim) - \rho_{\mathbf{IV}}(\sim)$  
invariants of $\a_1, \a_3$, and $\a$, the image of the map $$\mathcal I \rho:\; \mathbf B^{\mathsf{imm}}_{\mathsf{moderate \leq 2}}(A) \to \Z_2 \times \Z_2 \times \Z$$ is a subgroup of index $2$ at most,  which contains the $\Z$-module $\mathcal M$, spanned by the triples $(e, 0; 1), (0, e; 1), (e, e; 0)$, where $2e=0$. 

\begin{lemma}\label{lem.ODD_self-intersection}
No loop $\mathcal C$ in $A = \R \times [0, 1]$ that bounds an immersed orientable surface $\Sigma$ has an \emph{odd} self-intersection number. 
\end{lemma}

\begin{proof} Indeed, if an immersion $\a: \Sigma \to A$ with $\a(\d \Sigma) = \mathcal C$ exists, then the pull-back vector field $v = \a^\dagger(\hat v) \neq 0$. Thus, its index $\mathsf{ind}(v) = 0$.  Recall that $deg(G)$, the degree of the Gauss normal map $G: \a(\d \Sigma)  \to S^1$ can be calculated by the Hopf-Gottlieb formula (\cite{H}, \cite{Got}): $deg(G) = \chi(\Sigma) - \mathsf{ind}(v) =  \chi(\Sigma) = 1 - \b_1(\Sigma)$, where the first Betti number $\b_1(\Sigma) = 2g$ is even. On the other hand, by the Whiney formula \cite{Wh} $deg(G) = \mu + N^+ - N^-$, where $\mu = \pm 1$ and $N^+$, $N^-$ count the positive and negative crossings of $\mathcal C$ (this polarity of crossings is based on ``global" considerations). So, each ``kink" of $\mathcal C$ adds $\pm 1$ to $deg(G)$. Odd number of self-crossings $N^+ + N^-$, by the Whiney formula, produces an even degree $deg(G)$, which contradicts to the fact that $1 - \b_1(\Sigma)$ is an odd number. In fact, the minimal number of self-intersections an orientable immersed surface of genus $g$ with a circular boundary may have is $2g+2$ \cite{Gu}. \hfill 
\end{proof}

Thus, by Lemma \ref{lem.ODD_self-intersection}, none of the elements $(e, 0, 0), (0, e, 0), (0, 0, 1) \in \Z_2 \times \Z_2 \times \Z$ resides in the image of $\mathcal I \rho$. However, the elements $(e, 0, 1), (0, e, 1), (0, 0, 2) \in \Z_2 \times \Z_2 \times \Z$ do belong to the $\mathcal I\rho$-image. Therefore, the $\mathcal I\rho$-image is $\mathcal M =_{\mathsf{def}} \mathsf{span}_\Z\{(e, 0, 1), (0, e, 1), (0, 0, 2)\} \approx \Z_2 \times \Z$. 

Evidently, $\mathbf B^{\mathsf{emb}}_{\mathsf{moderate \leq 2}}(A) \approx \Z$ is in the kernel of $\mathcal I \rho$.  This completes the proof of Proposition \ref{prop.1.EXACT_SEQ_BLOBS}.
\hfill
\end{proof}


 
\smallskip

Note that \emph{oriented} immersed doodles also generate patterns similar to the ones in Fig. \ref{fig.1.9XX} at each intersection point $a \in A$. Indeed, let $\b: \mathcal C \to A$ be an immersion as in (\ref{eq1.4_A}).  Consider the two local branches $\mathcal C'_a$ and $\mathcal C''_a$ of $\b(\mathcal C)$ at $a$ that are transversal and oriented. The pair of tangent vectors $u'_a$  and $u''_a$ to $\mathcal C'_a$ and to $\mathcal C''_a$  that are consistent with the orientations, generate a particular \emph{sector} in $T_a A$, the convex hull of $u'_a$  and $u''_a$ (see Fig. \ref{fig.4.8888XX}, {\sf a}).  

By counting the four types of sectors in relation to $\hat v(x)$, which an oriented immersion $\b$ generates at its crossings $\{a\}$, we get the new quantities $\rho_{\mathsf{I}}(\b),\;\rho_{\mathsf{II}}(\b), \; \rho_{\mathsf{III}}(\b),\; \rho_{\mathsf{IV}}(\b)$. The sector's number  may be different by a permutation of four elements from the darkly shaded sectors in Fig. \ref{fig.1.9XX}. 

Therefore, the similar kind of invariants  $\rho_{\mathsf{I}}(\b) - \rho_{\mathsf{III}}(\b),\; \rho_{\mathsf{II}}(\b) - \rho_{\mathsf{IV}}(\b) \in \Z$ are available for the oriented doodles.  \smallskip
 
 As in the case of blobs, these three quantities are invariants of the quotient \hfill\break
 $\mathbf{OC}^{\mathsf{imm}}_{\mathsf{moderate \leq 2}}(A)/ \mathbf{OC}^{\mathsf{emb}}_{\mathsf{moderate \leq 2}}(A)$. The arguments that validate this claim are exactly the same as the ones that led to Lemma \ref{lem.1.5_XY} for blobs. Moreover, for $A = \R \times [0,1]$, these invariants are additive with respect to the connected sums $\uplus$ of oriented doodles.     
 
 \begin{proposition}\label{prop.1.3.1_XY}
For any oriented doodle $\b: \mathcal C \to A$, the invariants $\rho_{\mathsf{I}}(\b) -   \rho_{\mathsf{III}}(\b),\,   \rho_{\mathsf{II}}(\b) - \rho_{\mathsf{IV}}(\b) \in \Z$ define a surjective map  
$$\mathcal I \rho : \mathbf{OC}^{\mathsf{imm}}_{\mathsf{moderate \leq 2}}(A)/ \mathbf{OC}^{\mathsf{emb}}_{\mathsf{moderate \leq 2}}(A) \to \Z \times  \Z.$$

For $A = \R \times [0, 1]$, the map $\mathcal I \rho$ is an epimorphism with respect to the group operation $\uplus$. 
\end{proposition}

\begin{proof} We recycle the list of $\rho$-invariants for the oriented boundaries of submersions $\a_1, \a_3$, and $\a$ from the proof of Proposition \ref{prop.1.EXACT_SEQ_BLOBS} and add to them the $\rho$-invariants of new doodles that do not bound immersions of surfaces.  Differently oriented figures ``$\infty$", placed in $A$ horizontally with respect to the vector field $\hat v$ by  immersions $\b_+, \b_-: S^1 \to A$, realize the values $\{\rho_{\mathsf{I}}(\b_+) = 1,\; \rho_{\mathsf{III}}(\b_+)= 0,\; \rho_{\mathsf{II}}(\b_+) - \rho_{\mathsf{IV}}(\b_+) =0\}$ and $\{\rho_{\mathsf{I}}(\b_-) = 0,\; \rho_{\mathsf{III}}(\b_-)= 1,\; \rho_{\mathsf{II}}(\b_-) - \rho_{\mathsf{IV}}(\b_-) =0\}$. Consider another oriented doodle $\tilde\b: S^1 \to A$ (not a boundary of an immersion) with a single self-intersection whose image is a ``loop within a loop", symmetric with respect to the $\hat v$-trajectory through the self-intersection point. It realizes the values  $\rho_{\mathsf{I}}(\tilde\b) =0,\; \rho_{\mathsf{III}}(\tilde\b) = 0 ,\; \rho_{\mathsf{II}}(\tilde\b) =1, \rho_{\mathsf{IV}}(\tilde\b) =0$. Flipping the orientation of $S^1$, we realize the values  $\rho_{\mathsf{I}}(\bar\b) =0,\; \rho_{\mathsf{III}}(\bar\b) = 0 ,\; \rho_{\mathsf{II}}(\bar\b) =0 , \rho_{\mathsf{IV}}(\bar\b) =1$. 
Therefore, by linear algebra, the map $$\mathcal I \rho :\; \mathbf{OC}^{\mathsf{imm}}_{\mathsf{moderate \leq 2}}(A)/ \mathbf{OC}^{\mathsf{emb}}_{\mathsf{moderate \leq 2}}(A) \to \Z \times \Z.$$
is surjective. For $A = \R \times [0, 1]$, the map $\mathcal I \rho$ is an epimorphism with respect to the group operation $\uplus$. 
\hfill
\end{proof}

Let us introduce one useful operation/notation. Given an (oriented) immersion $\b: \mathcal C \to A$ (doodles) and a nonnegative number $n$, we denote by $\b(n)$ the new immersion, obtained by surrounding $\b(\mathcal C)$ by $n$ counterclockwise oriented nested simple loops.  If $n$ is negative, $\b(n)$ denotes an immersion in which $\b(\mathcal C)$ is surrounded by $n$ clockwise oriented nested simple loops. When talking about $2$-moderate immersions, we assume that the surrounding simple loops are $2$-moderate as well. Similarly,  given an (oriented) immersion  $\a: X \to A$ (blobs) and a nonnegative $n$,  $\a(n)$ denotes a submersion in which $\a(X)$ is surrounded by $n$ nested (counterclockwise oriented) embedded disks whose boundaries are disjoint and  $2$-moderate. Again, for a negative $n$, we flip the orientations of the disks. \smallskip 

Let us list a few simple properties of these operations. 
We skip their straightforward validation. We consider all the immersions as elements of the appropriate ($2$-moderate) bordism groups, so that the following identities are understood as taking place in these groups: 
\begin{eqnarray} 
\b_1(n) \uplus \b_2(n) = (\b_1 \uplus \b_2)(n), \nonumber\\
(\b(n))(m) = \b(n+m);\\
\a_1(n) \uplus \a_2(n) = (\a_1 \uplus \a_2)(n), \nonumber\\
(\a(n))(m) = \a(n+m).
\end{eqnarray}

We will use the operations $\odot_n: \b \leadsto \b(n)$ or $\odot_n:  \a \leadsto \a(n)$, in combination with the addition $\uplus$, as main tools for generating new examples of immersions. In particular, we will apply $\odot_n$ to the immersed $2$-moderate figures ``$\infty$" and ``$8$", to get families of immersions ``$\infty(n)$" and ``$8(n)$"  
(see Fig. \ref{fig.4.8888XX}, {\sf (a)},  which depicts the immersion $\infty(2) \uplus \overline\infty$). We distinguish between ``$\infty$" and ``$8$" by their position in relation to the vector field $\hat v$, so that ``$\infty$" stands for the crossing of type {\sf I, III}, while ``$8$" for the crossing of type {\sf II, IV}.
 
\begin{figure}[ht]
\centerline{\includegraphics[height=3in,width=5in]{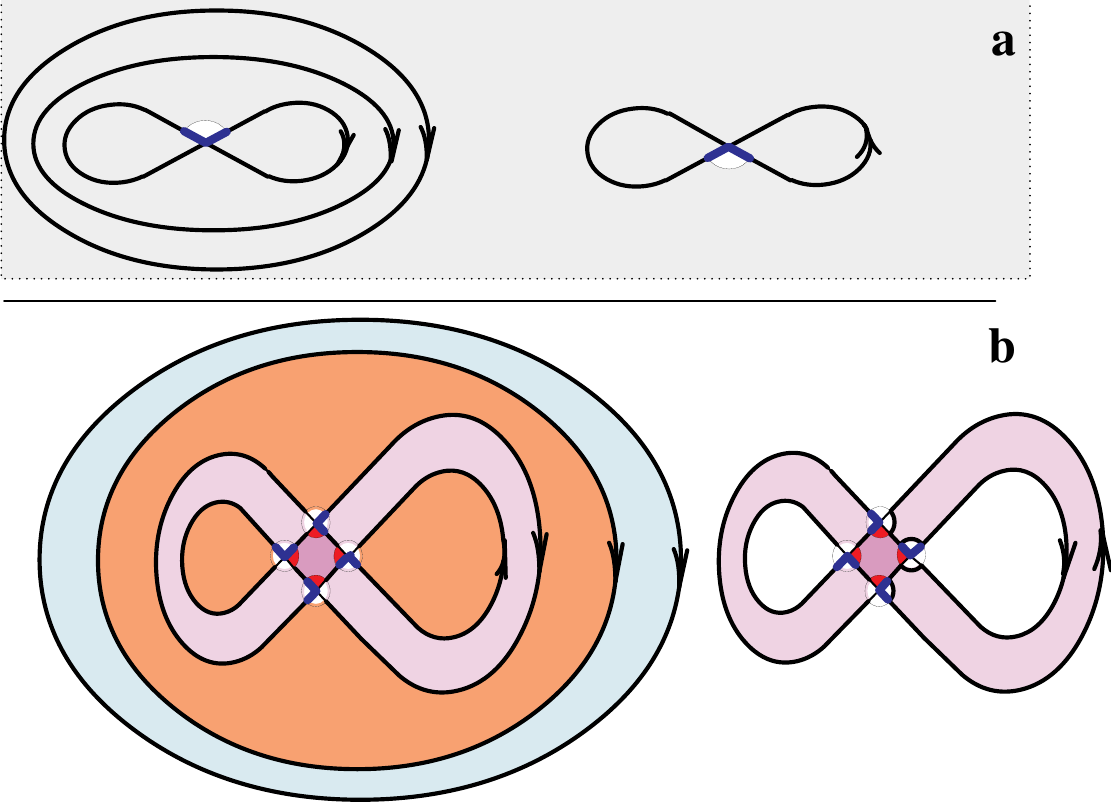}}
\bigskip
\caption{\small{{\sf(a)} Oriented doodles $\infty(2) \uplus \overline\infty$; the two crossings are of the types {\sf I} and {\sf III}.\;  {\sf (b)} Oriented blobs $\infty\text{\sf -ribbon}(2) \uplus \overline\infty\text{\sf -ribbon}$; the four crossings on the left are of the types (\bf{III},\,\sf{II}), (\bf{IV},\,\sf{III}), (\bf{I},\,\sf{IV}),  (\bf{II},\,\sf{I}),  and on the right of the types (\bf{III},\,\sf{IV}), (\bf{IV},\,\sf{I}), (\bf{I},\,\sf{II}),  (\bf{II},\,\sf{III}). The vector field $\hat v$ is vertical.}}
\label{fig.4.8888XX}
\end{figure}
\smallskip

We can improve the claim of Proposition \ref{prop.1.3.1_XY} by analyzing  algebraically and geometrically the kernel of the epimorphism $\mathcal I \rho$. 
With this goal in mind, we need to take a short detour into combinatorics that mimics the geometric situation we are facing. \smallskip

$\bullet$ Let $\Theta$ be a finite set of ``colors". In the near future, $\Theta = \{\mathsf{I, II, III, IV}\}$ or $\Theta = \{\mathsf{I, III}\}$. 

Consider a finite collection of smooth $2$-moderately embedded oriented loops $\mathcal E$ in $A$.  
We take a finite set of points $Q$ in the complement to $\mathcal E$ (soon, $Q$ will be the singular sets $\Sigma_\b$ of immersed doodles $\b(\mathcal C)$). In what follows, we consider the pairs $(Q, \mathcal E)$ up to an ambient isotopy of $A$, or up to an ambient isotopy of $A$ that preserves the foliation $\mathcal F(\hat v)$.

We divide $Q$ in complementary groups $Q_1, \ldots , Q_s$, each group being painted with a different color from the pallet $\Theta$. This coloring is a part of the $Q$-structure. 

The elements of $Q$ are considered in pairs. We impose some restrictions $\mathcal R$  on the {\sf pairings}  of $P: Q \to Q$, where $P$ is a {\sf free involution}.  These restrictions are expressed in terms of colors from the pallet $\Theta$. Some of the restrictions will require that the elements of a particular color may be paired only with other elements the same color, in which case we assume that the cardinality $|Q_i|$ of the corresponding $Q_i$ is even; other restrictions from $\mathcal R$ will pair elements of a particular color only with elements of another color, in which case $|Q_j| =|Q_k|$ for the corresponding sets $Q_j, Q_k$. 

We denote the set of such pairings (i.e., free involutions on $Q$), subject to the restrictions from $\mathcal R$, by $\wp(Q, \mathcal R)$.\smallskip

$\bullet$ In one special case, we will pair elements of $Q_{\mathsf{I}}$ with elements of $Q_{\mathsf{III}}$, elements of $Q_{\mathsf{II}}$ with elements of $Q_{\mathsf{IV}}$. 
Let us denote this special set of rules by $\mathcal R^\odot$. \smallskip


Employing $\mathcal E$, with each pairing $P \in \wp(Q, \mathcal R)$ we associate a function $c(P)$ on  the set of ordered pairs  $\{(q, P(q))\}_{q \in Q}$  with values in $\Z$. The integer $c(q, P) =_{\mathsf{def}}\; c(P)(q, P(q))$ counts the signed transversal intersections of an oriented path $\g(q, P(q)) \subset A$, connecting $q$ to $P(q)$, with the union of oriented loops that form the $1$-cycle $\mathcal E$. For homological reasons, $c(q, P)$ does not depend on the choice of the path $\g(q, P(q))$, provided $A = \R \times [0, 1]$. 

We may view $c(q, P)$ as the linking number of the pair $(q, P(q))$ with the cycle $\mathcal E$.  
Note that $c(q, P)=  -c(P(q), P)$.
\smallskip
This construction gives rise to a map 
\begin{eqnarray} \label{eq.FUNCTION_c}
\mathbf c_{\mathcal E,\, P}: \mathsf{Graph}(P) \to \Z,
\end{eqnarray}  
where  $\mathsf{Graph}(P) \subset Q \times Q$ denotes the graph of the free involution $P: Q \to Q$, subject to the $\mathcal R$-constraints. 

For any $\mathcal R$-admissible pairing $P: Q \to Q$, we always can choose the connecting paths $\{\g(q, P(q))\}_{q \in Q}$ so that they do not intersect each other and are transversal to $\mathcal E$. Without loss of generality, we may also assume that each path $\g(q, P(q))$ is $2$-moderate.


\begin{definition}\label{def.5_PRIMITIVE} $\bullet$ For a fix set of rules $\mathcal R$, we call the triple $\{Q, P, \mathcal E)$  {\sf irreducible} if there exist a set of paths $\{\g(q, P(q))\}_{q \in Q}$ such that all the paths are disjoint and each path $\g(q, P(q))$ has all its intersections with $\mathcal E$ of the same sign. By definition, if  for some $q$, the intersection $\g(q, P(q))\cap \mathcal E$ is empty, the collection $\big\{Q, P, \mathcal E \big\}$ is reducible. \smallskip

$\bullet$ For a fix  set of rules $\mathcal R$, we call the collection $\big\{Q, P, \mathcal E\big\}$ {\sf primitive} if it is irreducible and each colored set $Q_i \subset Q$ consists of exactly two elements, when $P(Q_i) = Q_i$, and of a single element when $P(Q_i) = Q_j$ for a $Q_j \neq Q_i$. \hfill $\diamondsuit$
\end{definition}

It is easy to see that, given two sets $(Q_1, P_1, \mathcal E_1)$ and $(Q_2, P_2, \mathcal E_2)$, their {\sf connected sum} $\uplus$ in $A$ produces a new triple $(Q_1 \uplus Q_2,\, P_1 \uplus P_2,\, \mathcal  E_1 \uplus \mathcal E_2)$ by squeezing  $(Q_1,  \mathcal E_1)$ in $\R \times (0, 0.5)$ and $(Q_2, \mathcal E_2)$ in $\R \times (0.5, 1)$ and applying the pairing  $P_1 \coprod P_2 \in \mathcal R$ to $Q_1 \uplus Q_2$. 
\smallskip

For a fixed set of rules $\mathcal R$, the operation $\uplus$ turns the set $\mathbf D(\mathcal R)$ of all triples  $\big\{Q, P, \mathcal E\big\},$ being considered up to an ambient isotopy of $A$ that preserves $\mathcal F(\hat v)$, into an abelian semi-group.
The associativity and commutativity of the operation $\uplus$ in 
$\mathbf D(\mathcal R)$ can be validated as in Lemma \ref{lem.1.5XY}.
\smallskip

For a given triple $(Q,  \mathcal R, \mathcal E)$, consider the $\ell_1$-norm $\|\mathbf c_{\mathcal E,\, P}\|_{\ell_1}$ of the discrete function $\mathbf c_{\mathcal E,\, P}: \mathsf{Graph}(P)  \to \Z$ in (\ref{eq.FUNCTION_c}). It is independent of the choice of paths $\{\g(q, P(q))\}_{q \in Q}$.
We introduce a non-negative integer  by the formula 
\begin{eqnarray}\label{eq.1_m-NORM}
m(Q,  \mathcal R, \mathcal E) =_{\mathsf{def}}\; \min_{P\, \in\; \wp(Q, \mathcal R)} \Big\{\|\mathbf c_{\mathcal E,\, P}\|_{\ell_1}\Big\}.
\end{eqnarray} 
Evidently,
$$m(Q_1 \uplus Q_2,\, \mathcal R,\, \mathcal E_1 \uplus \mathcal E_2)\, \leq \, m(Q_1,\, \mathcal R,\, \mathcal E_1) + m(Q_2,\,  \mathcal R,\, \mathcal E_2).
$$
Thus, $m(Q, \mathcal R, \mathcal E)$ resembles a semi-norm under the  
connected sum operation $\uplus$. However, in general, $m\big(\uplus_1^k\; \{Q,\, \mathcal R,\, \mathcal E\}\big) \neq k \cdot m(Q, \mathcal R, \mathcal E)$. At the same time, for a primitive $\{Q, \mathcal R, \mathcal E\}$, we have $m\big(\uplus_1^k\; \{Q,\, \mathcal R,\, \mathcal E\}\big) = k \cdot m(Q, \mathcal R, \mathcal E)$. \smallskip

We denote by $\mathbf D_0(\mathcal R)$  the sub-semigroup of $\mathbf D(\mathcal R)$ consisting of the triples $\{Q, \mathcal R, \mathcal E\}$ with the  property $\{m(Q, \mathcal R, \mathcal E) = 0\}$ and consider the quotients 
$\mathbf D(\mathcal R)\big / \mathbf D_0(\mathcal R)$.
\smallskip

 \begin{lemma} \label{lem.D_is_a_group}
$\mathbf D(\mathcal R)\big / \mathbf D_0(\mathcal R)$ is an abelian group with respect to the operation $\uplus$. 
\smallskip

If $\mathcal R$ allows for pairings within each given color,  then any non-zero element of the group $\mathbf D(\mathcal R)\big / \mathbf D_0(\mathcal R)$ is of order $2$.
\end{lemma}

\begin{proof} 
The only property of groups that needs a clarification is the existence of the negation (minus element). 
Let $\tau: A \to A$ be a mirror involution. For any triple $\{Q, P, \mathcal E\}$, we form the triple $\tau(\{Q, \bar P, \mathcal E\}$,
where each pair $(q, P(q))$ is replaced in $\tau(\{Q, \bar P, \mathcal E\})$ by the pair $(\tau(P(q)), \tau(q))$ so that  $\tau(q)$ is given the color of $P(q)$ and $\tau(P(q))$ is given the color of $q$. Consider the new triple $\{Q, P, \mathcal E\} \uplus \tau(\{Q, \bar P, \mathcal E\}$. Now the pairing $(q, \tau(q))$ is permissible. Since the orientations of $\tau(\mathcal E)$ is opposite to the orientations of $\mathcal E$, the path $\g_q$ that connects $q$ and $\tau(q)$ has the property $\g_q \circ (\mathcal E \uplus \tau(\mathcal E)) =0$. Therefore, $\{Q, P, \mathcal E\} \uplus \tau(\{Q, \bar P, \mathcal E\} \in \mathbf D_0(\mathcal R)$.
Hence, $\tau(\{Q, P, \mathcal E\})$ is the negative of $\{Q, P, \mathcal E\}$ in  $\mathbf D(\mathcal R)/\mathbf D_0(\mathcal R)$. This validates the first claim.\smallskip

Consider the triple $\{Q, P, \mathcal E\} \uplus \{Q, P, \mathcal E\}$ and 
a point $q \in Q$. Let $\g(q, s) \subset \R \times [0, 0.4]$ be any smooth path that is transversal to $\mathcal E$ and connects $q$ to a point $s$ whose $\R$-coordinate is bigger than $h$, where the box $[-h, h] \times [0, 0.4]$ contains $Q$ and $\mathcal E$. We think of the second copy of $\{Q, P, \mathcal E\}$ as being obtained by the $0.5$-shift $Sh: \R \times [0, 0.5] \to \R \times [0.5, 1]$ to the right. By the hypotheses about $\mathcal R$, we may pair $q$ and $Sh(q)$.

We can connect $s$ to $Sh(s)$ by a path $\omega$ that resides outside the box $[-h, h] \times [0, 1]$. Then the path $\tau = \g(q, s) \cup \omega \cup \overline{Sh(\g(q, s))}$ (where $\overline{Sh(\g(q, s))}$ denotes the path $Sh(\g(q, s))$ with the reverse orientation) connects $q$ to $Sh(q)$ and has a pair of adjacent intersections with $\mathcal E \cup Sh(\mathcal E)$ of opposite signs. 
Therefore, $\{Q, P, \mathcal E\} \uplus \{Q, P, \mathcal E\} \in \mathbf D_0(\mathcal R)$, and thus $\{Q, P, \mathcal E\}$ is an element of order $2$ in $\mathbf D(\mathcal R)\big / \mathbf D_0(\mathcal R)$.
\hfill
\end{proof}

\begin{lemma}\label{lem.1_invariant_NORM} The correspondence  $\{Q,  \mathcal R, \mathcal E\} \leadsto m(\{Q,  \mathcal R, \mathcal E\})$ defines a norm-like map \hfill \break $m: \mathbf D(\mathcal R)\big / \mathbf D_0(\mathcal R) \to \Z_+$ such that $m(a \uplus b) \leq m(a) + m(b)$ and $m(a) = 0$ implies $a = 0$, where $a, b \in \mathbf D(\mathcal R)\big / \mathbf D_0(\mathcal R)$.  
\end{lemma}

\begin{proof} 
The verification of the claims is on the level of definitions.
\end{proof}

$\bullet$ This ends the combinatorial detour.  \hfill $\diamondsuit$ \smallskip


We are in position to return to bordisms of $2$-moderate immersions of oriented doodles and to state one of our main results.\smallskip


\begin{figure}[ht]
\centerline{\includegraphics[height=3.5in,width=3.5in]{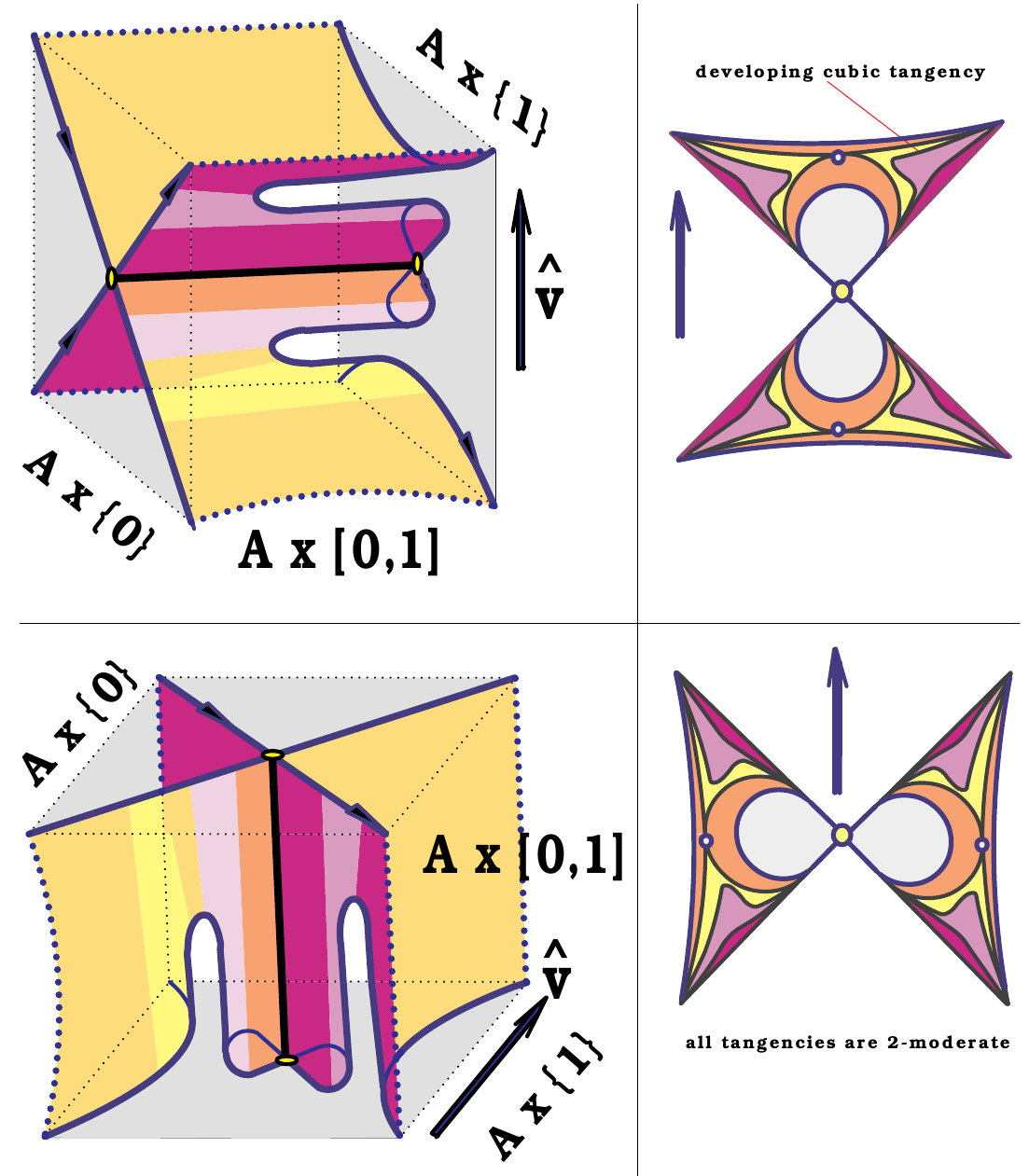}}
\bigskip
\caption{\small{Resolving  crossings of types {\sf I} or {\sf III} into the figures ``$\infty$",  ``$\overline\infty$" and a pair of arcs via a $2$-moderate cobordism (the lower figure). In contrast, resolving  crossings of types {\sf II} or {\sf IV} into the figures ``$8$", ``$\overline 8$" and a pair of arcs is impossible via a $2$-moderate cobordism (the upper figure).}}
\label{fig.4.RESOLUTIONXX}
\end{figure}

\begin{theorem}\label{th.1.DOODLES}  Let $A = \R \times [0, 1]$  
and $\mathcal R =\{\mathsf{I} \Leftrightarrow \mathsf{III}, \mathsf{II} \Leftrightarrow \mathsf{IV}\}$. 

$\bullet$ There is an exact sequence of abelian groups  
$$0 \to  \mathbf K \to \mathbf{OC}^{\mathsf{imm}}_{\mathsf{moderate \leq 2}}(A)\big/\mathbf{OC}^{\mathsf{emb}}_{\mathsf{moderate \leq 2}}(A) \stackrel{\mathcal I \rho}{\longrightarrow} \Z \times \Z \to 0,$$
where the homomorphism $\mathcal I \rho$ is given by the two integral invariants, $\rho^{\mathsf{I}}(\sim) - \rho^{\mathsf{III}}(\sim)$ and \hfill\break $\rho^{\mathsf{II}}(\sim) - \rho^{\mathsf{IV}}(\sim)$. 
\smallskip

$\bullet$ There exists a monomorphism $\Theta: \mathbf D(\mathcal R)/ \mathbf D_0(\mathcal R) \hookrightarrow  \mathbf K$ whose image $\mathbf G$ is spanned over $\Z$ by the doodles   $\{\infty(n) \uplus \overline\infty\}$ and $\{8(n) \uplus \overline 8\}$, where $n \in \Z,\, n \neq 0$. In fact,  $\mathbf G \approx (\Z)^\infty$. \smallskip

$\bullet$ $\mathbf{OC}^{\mathsf{emb}}_{\mathsf{moderate \leq 2}}(A) \approx \Z \times \Z$ via an isomorphism as in (\ref{eq.EMB_ORIENT}).
\end{theorem}


\begin{proof} 
We start with a finite set of oriented loops $\mathcal C$ and their $2$-moderately generic (relatively to $\hat v$) immersion $\b: \mathcal C \to A$. At each point of self-intersection $a \in \b(\mathcal C)$ of types $\mathsf I$ and $\mathsf{III}$, in the vicinity of the crossing, we perform a $2$-moderate surgery on $\b(\mathcal C)$ whose result $\b^\dagger$ is shown in Fig. \ref{fig.4.RESOLUTIONXX}, lower diagram. The resulting oriented curves contain  small figures ``$\infty_a$" and ``$\overline{\infty}_a$", 
whose self-intersection resides at $a$. The bar denotes the flip of the orientation of $\infty_a$, determined by the orientations of the two branches of $\b(\mathcal C)$ at $a$. 
The type of the self-intersection of $\infty_a$ 
in relation to $\hat v(a)$ is the same as the type of the original crossing of $\b(\mathcal C)$ at $a$ with values in $\mathsf{\{I,  III \}}$; so  $\infty_a$ occurs for type $\mathsf I$, and $\overline{\infty}_a$ for type $\mathsf {III}$ crossings. 
The rest of the oriented curves $\mathcal C'$ are immersed in $A$ by a map $\b'$ which has the crossings of the types $\mathsf{\{II,  IV \}}$ only. The immersions $\b$ and $\b^\dagger$ are $2$-moderately cobordant.

By a $2$-surgery on $\b(\mathcal C')$, we could replace the doodles $\mathcal C'$ by  embedded doodles $\mathcal C''$  union several figures ``$8_a$" for the crossings of types $\mathsf{II}$ and several figures ``$\overline{8}_a$" for the crossings of types $\mathsf{IV}$. Let us denote by $\b^\ddagger$ the resulting immersion. Crucially, such a surgery, although being canonical, is not $2$-moderate (see Fig. \ref{fig.4.RESOLUTIONXX})! 
As a result, the immersions $\b$ and $\b^\ddagger$ fail to be $2$-moderately cobordant. This fact complicates our efforts. \smallskip

In any case, via these canonical surgeries, we may replace any given immersion $\b: \mathcal C \to A$ with the  immersion $\b^\ddagger$ which comprises the embedded doodles $\mathcal C'' \subset A$ disjoint union with a number of $2$-moderately immersed figures ``$\infty_a$", ``$\overline{\infty}_a$", and ``$8_a$" , ``$\overline{8}_a$". 

\begin{figure}[ht]
\centerline{\includegraphics[height=2.5in,width=4in]{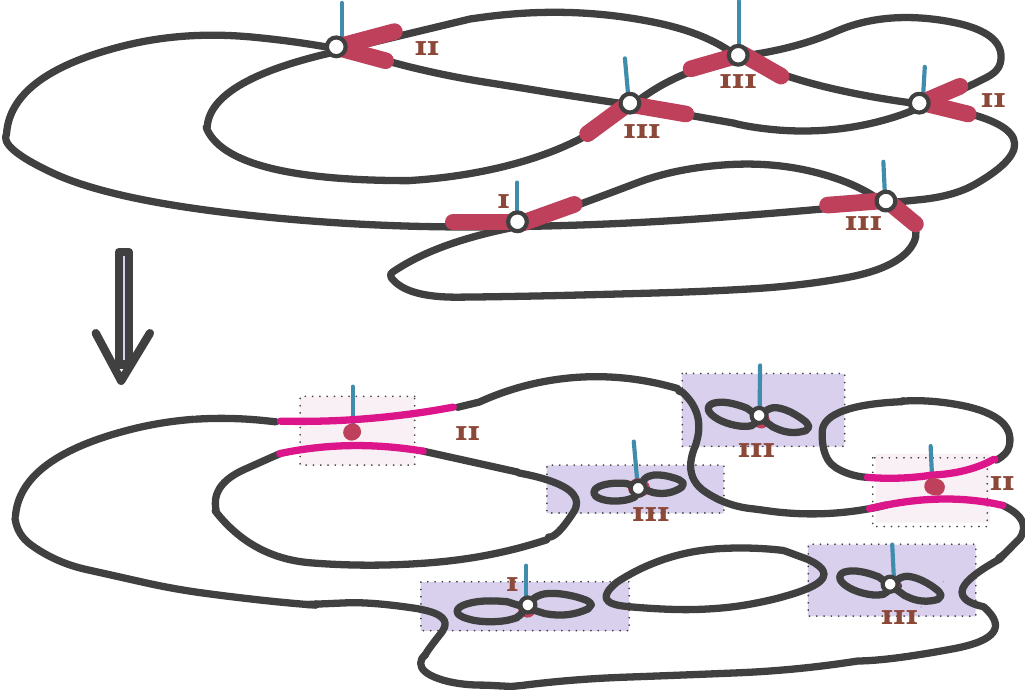}}
\bigskip
\caption{\small{Resolving an oriented immersed doodle into several figures ``$\infty$" (type {\sf I}),  ``$\overline\infty$" (type {\sf III}), and number of $2$-moderate embedded loops. The figure illustrates the homomorphism $\mathsf{res}^{\mathsf{II}, \mathsf{IV}}$ from (\ref{eq.RESOLUTION_II_IV}). The vector field $\hat v$ is vertical.}}
\label{fig.4.RESOLUTION_of_DOODLES}
\end{figure}

To summarize, for types $\mathsf I$ and $\mathsf {III}$, we get the configurations ``$\infty \uparrow_\vee$" and  ``$\overline\infty \uparrow_\wedge$"; for types $\mathsf {II}$ and $\mathsf{IV}$, we either keep the original configurations ``$\times\uparrow_< $" and  ``$\times\;_>\uparrow$" or, depending on the context,  replace them with configurations ``$8 \uparrow_< $" and  ``$8\;_>\uparrow$". Here ``$\uparrow$"  mimics the direction of the vector field $\hat v$ in relation to the oriented figures ``$\infty$ and ``$\times$" and the symbols ``$_\vee,\; _\wedge,\; _<,\; _>$" indicate the preferred sector (determined by the  orientations of the two branches at the crossing $a$) in relation to $\hat v(a)$, i.e., in relation to ``$\uparrow$". \smallskip

If the invariants $\rho_{\mathsf{I}}(\b)- \rho_{\mathsf{III}}(\b) \in \Z,\; \rho_{\mathsf{II}}(\b) - \rho_{\mathsf{IV}}(\b) \in \Z$
vanish, then we can pair figures ``$8\;\uparrow_< $" of type $\mathsf{II}$ with figures ``$8\;_>\uparrow $" of type $\mathsf{IV}$,  figures``$\infty \uparrow_\vee$" of type $\mathsf{I}$  with figures  ``$\infty \uparrow_\wedge$" of type $\mathsf{III}$. 
\smallskip

Consider immersions in the kernel of $\mathcal I\rho$ that consist of several $2$-moderate figures $\infty, \overline\infty, 8, \overline 8$ disjoint union with $2$-moderate embeddings. Thus, the number of figures $\infty$ is equal to the number of figures $\overline\infty$, and the number of figures $8$ is equal to the number of figures $\overline 8$. Let us denote by $\hat{\mathbf G}$ the set of such immersions, being considered up to the $2$-moderate cobordisms. In particular, we focus on the subgroup $\mathbf G \subset \mathbf K$, spanned over $\Z$ by the elements $\{\infty(n) \uplus \overline\infty\}$ and $\{8(n) \uplus \overline 8\}$, where $n \in \Z,\, n \neq 0$. 

First we need to verify that these elements are nontrivial and distinct in $\mathbf K$. Moreover, we will show that they are elements of infinite order in $\mathbf K$.  


Consider a $2$-moderate immersion $B: W \to A \times [0, 1]$ that delivers a cobordism between an immersion $\b: \mathcal C \to A \times \{0\}$ and some embedding $\b_1: \mathcal C_1 \to A \times \{1\}$. Then the  self-intersection curves $\{\ell\}$ of $B(W)$ define a pairing $\{P: a \leadsto P(a)\}_{a \in \Sigma_\b}$ consistent with the set of rules $\mathcal R = \{ \mathsf{I} \Leftrightarrow \mathsf{III}, \mathsf{II} \Leftrightarrow \mathsf{IV}\}$.  

On the other hand, for $\b: \mathcal C \to A$ which represents an element of $\hat{\mathbf G}$, any 
pairing $\{P: a \leadsto P(a)\}_{a \in \Sigma_\b}$ from $\mathcal R$ allows us to attach in $A \times [0, 1)$  $1$-dimensional $2$-moderate {\sf cross-handles} $\infty_a \times I$ and $8_a \times I$ (the immersed images  in $A \times [0, 1)$ of the handles $S^1_a \times I$) to each pair $\infty_a \coprod \overline{\infty}_{P(a)} \subset A \times \{0\}$ or $8_a \coprod \overline{8}_{P(a)} \subset A \times \{0\}$. By a general position argument, these $2$-moderate handles are disjoint.
The union of these cross-handles with the embedded surface  $\mathcal C_1 \times [0, 1]$ delivers an oriented cobordism  $$B:\,  \coprod_a \{S^1_a \times I\} \coprod \{\mathcal C_1 \times [0, 1]\}  \to A \times [0, 1]$$ between $\b$ and $\b_1$.  It is important to attach the cross-handles with some care in order to satisfy the requirement for $B$ to have only $2$-moderate tangencies to the $\hat v^\bullet$-trajectories in $A \times [0, 1]$.   

First, we need to avoid the intersections of the $1$-dimensional singular cores of the immersed surfaces $B(\coprod_a \{S^1_a \times I\})$ with the embedded surfaces $B({\mathcal C_1} \times [0, 1])$ since such intersections would contribute cubic tangencies  at least  to the $\hat v^\bullet$-trajectories. 

It turns out that there are new  combinatorial obstructions to $B$ being $2$-moderate. 
These new obstructions are described in the next lemma. 

\begin{lemma}\label{lem.1_KEY_for_DOODLES} Let $A = \R \times [0, 1]$ and let $\mathcal R =\{\mathsf{I} \Leftrightarrow \mathsf{III}, \mathsf{II} \Leftrightarrow \mathsf{IV}\}$ or $\mathcal R =\{\mathsf{I} \Leftrightarrow \mathsf{III}\}$. Let  $\mathcal C$ be  a finite collection  of oriented circles. Consider a $2$-moderate immersion $\b: \mathcal C \to A$
such that  $\rho_{\mathsf{I}}(\b) = \rho_{\mathsf{III}}(\b)$ and  $\rho_{\mathsf{II}}(\b) = \rho_{\mathsf{IV}}(\b)$. 
 We denote by $\Sigma_\b$ its self-intersection locus of $\b$.
 
 By replacing in $\b(\mathcal C)$  each crossing with the figures ``$\infty_a$", ``$\overline{\infty}_a$" or ``$8_a$" or ``$\overline{8}_a$"  we get a new $2$-moderate immersion $\b^\ddagger$ that is a disjoint union of these figures with a $2$-moderate embedding $\b'': \mathcal C'' \to A$. 
 \smallskip

Then $\b^\ddagger$ is cobordant, via $2$-moderate oriented immersions of surfaces, to an embedding $\b_1$ if and only if $m(\Sigma_\b,  \mathcal R,  {\mathcal C''}) = 0$ (see (\ref{eq.1_m-NORM})), i.e., iff there exists a pairing $P \in \wp (\Sigma_\b, \mathcal R)$ whose function 
$\mathbf c_{\mathcal E,\, P}:  \mathsf{Graph}(P) \to \Z$ is identically zero. 

In the case $\mathcal R =\{\mathsf{I} \Leftrightarrow \mathsf{III}\}$, $\b^\ddagger$ is $2$-moderately cobordant to the original $\b$.
\end{lemma}

\begin{proof} 
We will prove the claim for $\mathcal R =\{\mathsf{I} \Leftrightarrow \mathsf{III}, \mathsf{II} \Leftrightarrow \mathsf{IV}\}$; the arguments in the case $\mathcal R =\{\mathsf{I} \Leftrightarrow \mathsf{III}\}$ are very similar and simpler. As the Fig. \ref{fig.4.RESOLUTIONXX} testifies, for this choice of $\mathcal R$, $\b^\dagger$ is $2$-moderately cobordant to the original $\b$. \smallskip

Let $B: W \to A \times [0, 1]$ be a $2$-moderate immersion that delivers the oriented cobordism between the immersion $\b$ and some embedding $\b_1$.  We have seen that the self-intersection locus $\Xi_B = \coprod \ell$ of $B(W)$ delivers a particular pairing $P_B: \Sigma_\b \to \Sigma_\b$ that is consistent with $\mathcal R$. 

We denote by $I$ a closed interval. We replace the singular surface $B(W)$ with the disjoint union of narrow $2$-moderate cross-handles $$\big\{B^\ddagger_a: S^1_a \times I \to  A \times [0, 1]\big\}_{a \in \Sigma_\b/P_B}$$ whose images are $\infty_a \times I$,  $\overline\infty_a \times I$, and $8_a \times I$,  $\overline 8_a \times I$, together with a number of narrow $2$-moderate cross-tori $$\big\{B^\ddagger_\kappa: S^1_\kappa \times S^1 \to  A \times (0, 1)\big\}_\kappa$$ ($B^\ddagger_\kappa(S^1_\kappa \times S^1)$ are diffeomorphic to  $\infty_\kappa \times S^1$ or to $8_\kappa \times S^1$), and a $2$-moderate regular embedding $B'': W'' \hookrightarrow A \times [0, 1]$ of an oriented surface $W''$. The trace of this cobordism in $A \times \{0\}$ produces the curves $(\coprod_{a \in \Sigma^{\mathsf{I}}_\b} \infty_a) \coprod (\coprod_{a \in \Sigma^{\mathsf{III}}_\b} \overline\infty_a) \coprod(\coprod_{a \in \Sigma^{\mathsf{II}}_\b} 8_a) \coprod (\coprod_{a \in \Sigma^{\mathsf{IV}}_\b} \overline 8_a)$ disjoint union with a \emph{nonsingular} curve $\mathcal C'' = B''(W'') \cap (A \times \{0\})$ as has been described above. \smallskip

Consider now the cores $\{\delta =_{\mathsf{def}} \delta_{a, P(a)}\}_{a \in \Sigma_\b}$ of the cross-handles and of the cross-torii. We notice that $\delta \cap B''(W'')$ must be empty in order to insure that $B$ is $2$-moderate. Indeed, if $x \in \delta \cap B''(W'')$, then $x$ is a triple-intersection point for $B(W)$, which violates the $2$-moderate property of $B$.  \smallskip

Note that the algebraic (i.e., signed) intersection number $\delta_{a, P(a)} \circ B''(W'')$  is a homological invariant for the relative homology class $[\delta_{a, P(a)}] \in H_1(A \times [0, 1], a \cup P_B(a);\, \Z)$ of the path $\delta_{a, P(a)}$ and the relative homology class $$[B''(W'')] \in H_2\big(A \times [0, 1],\;\, [A  \setminus (a \cup P_B(a))] \times \{0\} \; \sqcup \; (A \times \{1\});\, \Z\big)$$
of the relative $2$-cycle $B''(W'')$.
Since the relative homology class $[\delta_{a, P(a)}]$ and the relative homology class 
$$[\g(a, P_B(a))] \in H_1(A \times [0, 1],\; a \cup P_B(a);\; \Z)$$ are equal (the two paths are even homotopic relative to $a \cup P_B(a)$ in $A \times [0, 1]$), we conclude that the algebraic intersection number   $$\delta_{a, P(a)} \circ B''(W'') = \g(a, P_B(a)) \circ  B''(W'') = \g(a, P_B(a)) \circ \mathcal C''.$$
Therefore, if $\g(a, P_B(a))\, \circ \, \mathcal C_1 \neq 0$, we get a contradiction with the requirement $\delta_{a, P(a)} \cap B'(W') = \emptyset$. As a result, if $\g(a, P_B(a)) \circ \mathcal C'' \neq 0$ for some $a \in \Sigma_\b$, then the cobordism $B$ is not $2$-moderate.\smallskip

On the other hand, assume that, for some $\mathcal R$-amenable involution $P: \Sigma_\b \to \Sigma_\b$, the intersection $\g(a, P(a)) \circ \mathcal C'' =  0$  for all $a \in \Sigma_\b$, where $\mathcal C''$ is a $2$-moderate nonsingular component of the replacement $\b^\ddagger$ of $\b$. Then, let us show that $\b$ is $2$-moderately cobordant to an embedding $\b_1$. 

Using the notations from the previous arguments,  
we start with the cobordism $W$ which is the union $H \subset A \times [0, 1]$ of $2$-moderate cross-handles, attached to $A \times \{0\}$ at points of $\Sigma_\b$  according to the coupling $P$, with the non-singular surface $\mathcal C'' \times [0,1]$.   
Since $\hat v^\bullet$ is transversal to the cores of the narrow cross-handles, we may ensure that $H$ is $2$-moderate.  

Although the surfaces $H$ and $S =\mathcal C'' \times [0,1]$ individually are  $2$-moderate, their union $W = H \cup S$ may be not. 

However, if we assume that all the cores $\delta$ of cross-handles from $H$ have the property $\delta \circ S = 0$, then we can correct the failure of $W = H \cup S$ to be $2$-moderate. Indeed, if two adjacent intersections $x, y$  of $\delta$  with the oriented surface $S$ are of opposite signs, then we can attach a $1$-handle $S^1 \times [x, y]$ to $S$ whose core is the segment $[x,y] \subset  \delta$ and whose interior $D^2 \times [x, y]$  engulfs the cross-handles $\infty_a \times [x,y]$, or $\overline\infty_a \times [x,y]$, or $8_a \times [x,y]$, or $\overline8_a \times [x,y]$.  The result is a new embedded oriented surface $S'$ which intersects  $\delta_a$ at fewer  points: the intersections $x$ and $y$ has been eliminated. The rest of the cores $\delta_b$ keep their old intersections with $S$. Continuing this elimination of the adjacent along the cores intersections of opposite signs with evolving nonsingular surfaces, eventually we will get a nonsingular oriented surface $\tilde S$ which is disjoint from all the cross-handles. The  union of $\tilde S$ with all the cross-handles delivers the desired $2$-moderate cobordism between the immersion $\b$ and some embedding $\b_1$. 

By the previous arguments, this recipe for constructing $\b_1$ works when, for some free involution $P: \Sigma_\b \to \Sigma_\b$, we have the property $\g(a, P(a))\, \circ \, \mathcal C'' =  0$ for all $a \in \Sigma_\b$. This means exactly that there exists a pairing $P \in \wp (\Sigma_\b, \mathcal R)$ whose function $\mathbf c_{\mathcal C'',\, P}:  \mathsf{Graph}(P) \to \Z$ is identically zero. In other words, the semi-norm $m(\Sigma_\b,  \mathcal R, \mathcal C'')$ must vanish. \hfill
\end{proof}

We continue with the proof of Theorem \ref{th.1.DOODLES}. As before, in our notations, we suppress  the choices of the paths $\{\g(q, P(q))\}_{q \in Q}$ (equivalently, we assume them to be oriented segments of lines). 

Given an element $\{Q, P, \mathcal E\}$ of the group $\mathbf D(\mathcal R)$, we associate with it a $2$-moderate immersion $\b: \mathcal C \to A$ by replacing each point $q \in Q$ with a small figure $\infty, \overline\infty$, or $8, \overline 8$ (depending on the type $\{\mathsf {I, II, III, IV}\}$ of $q$), and keeping the $2$-moderate embedding $\mathcal E$ for the role of $\mathcal C''$.  By Lemma \ref{lem.1_KEY_for_DOODLES}, $\b$ is  cobordant via a $2$-moderately cobordism $B: W \to A \times [0, 1]$ (whose self-intersection locus $\coprod \ell$ defines $P$)  to an embedding if and only if the element $\{Q, P, \mathcal E\}$ is such that the function $\mathbf c_{\mathcal E,\, P}:  \mathsf{Graph}(P) \to \Z$ is identically zero. Therefore, by the definition of $\mathbf D_0(\mathcal R)$, this construction $\{Q, P, \mathcal E\} \leadsto \b$ defines a momomorphism $\Theta: \mathbf D(\mathcal R)/\mathbf D_0(\mathcal R) \hookrightarrow \mathbf K$. Its image is exactly the subgroup $\hat{\mathbf G}$ (by this group's definition).
\smallskip

Consider the subgroup $\mathbf G \subset \hat{\mathbf G}$ spanned over $\Z$ by the elements $\{\infty(n) \uplus \overline\infty\}$ and $\{8(n) \uplus \overline 8\}$, where $n \in \Z,\, n \neq 0$. 
For any $\mathcal R$-admissible pairing $P$ between the two crossing $a$ and $P(a)$ of each figure,  the intersection number $\g(a, P(a)) \circ C''$ of any path $\g(a, P(a))$ with the concentric loops $C''$ surrounding $\infty, \overline\infty$, or $8, \overline 8$ is equal to $n \neq 0$. Thus, by Lemma \ref{lem.1_KEY_for_DOODLES}, the elements $\{\infty(n) \uplus \overline\infty\}$ and $\{8(n) \uplus \overline 8\}$ are nontrivial in $\mathbf K$ for all $n \neq 0$. Moreover, for the same reason, their $k$-multiple copies (with respect to $\uplus$) are not $2$-moderately cobordant to an embedding. Therefore, the elements $\{\infty(n) \uplus \overline\infty\}$ and $\{8(n) \uplus \overline 8\}$ are of the infinite order in $\mathbf K$. Along the same lines, for $n_1 \neq n_2 \neq 0$, no $k$-multiple of $\{\infty(n_1) \uplus \overline\infty\}$ is $2$-moderately cobordant to $\{\infty(n_2) \uplus \overline\infty\}$. Since $\mathbf K$ is abelian, it follows that $\mathbf G \approx (\Z)^\infty$. \smallskip

In fact, $\hat{\mathbf G} = \mathbf G$. Indeed, consider a $2$-moderate simple loop $\mathcal L$ that contains several figures $\infty, \overline\infty$, or $8, \overline 8$ and does not contain any other simple loops. By a $2$-moderate $1$-surgery (the $1$-handles must avoid the figures $\infty, \overline\infty, 8, \overline 8$) on the domain $\mathcal D$ that bounds $\mathcal L$, we split $\mathcal D$ in several topological disks so that each of them contains a single figure. Let us call temporarily a nested group of simple loops which contain a single figure  $\infty, \overline\infty$, or $8, \overline 8$ a ``{\sf block}". If a block contains oppositely oriented simple loops, they can be eliminated by a $2$-moderate surgery on the annulus that is bounded by them (see Fig. \ref{fig.1.7XX}, the last two slices). Thus, we may assume that all the loops in a block are oriented coherently. 

If a simple loop $\mathcal L'$ bounds a domain $\mathcal D'$ that contains several blocks, then again by a $2$-moderate $1$-surgery on $\mathcal D'$ we can split $\mathcal D'$ in a number of disks, each of which bounds a single new block. Proceeding this way, we transform any configuration of figures and simple loops into $2$-moderately cobordant disjoint union of coherently oriented blocks. This transformation does not affect the original figures $\{\infty, \overline\infty, 8, \overline 8\}$ and their pairings. Therefore, $\hat{\mathbf G} = \mathbf G \approx (\Z)^\infty$.

The last claim of the theorem follows from the isomorphism (\ref{eq.EMB_ORIENT}), since any oriented embedded doodle in $A$ is the boundary of an oriented embedded blob. 
This completes the proof of Theorem \ref{th.1.DOODLES}. \hfill 
\end{proof}

\smallskip

 




Let $\b: \mathcal C \to A$ be a $2$-moderate oriented immersion.
By  canonical resolutions of the types $\mathsf{II}$ and $\mathsf{IV}$ singularities ``$\times\uparrow_< $" and  ``$\times\;_>\uparrow$", we eliminate them in $\b'$ (see Fig. \ref{fig.4.RESOLUTION_of_DOODLES}). The result, $\b^\dagger$, is a new $2$-moderate embedding $\b'': \mathcal C'' \to A$ disjoint union with a number of immersions of the type ``$\infty_a$" and ``$\overline{\infty}_a$". The resolution $\mathsf{res_{II, IV}}: \b \leadsto \b^\dagger$ may change the $2$-moderate bordism invariant $\rho_{\mathsf{II}}(\b) - \rho_{\mathsf{IV}}(\b)$ and thus may change the bordism class of $\b$. 
These canonical resolutions produce a homomorphism
\begin{eqnarray} \label{eq.RESOLUTION_II_IV} 
\mathsf{res}^{\mathsf{II}, \mathsf{IV}}: \mathbf{OC}^{\mathsf{imm}}_{\mathsf{moderate \leq 2}}(A) \to \mathbf{OC}^{\mathsf{imm,\; \mathsf{I} \& \mathsf{III}}}_{\mathsf{moderate \leq 2}}(A),
\end{eqnarray}
where $\mathbf{OC}^{\mathsf{imm,\; \mathsf{I} \& \mathsf{III}}}_{\mathsf{moderate \leq 2}}(A)$ stands for the $2$-moderate bordisms of oriented immersions whose crossings are of the types $\mathsf{I}$ and  $\mathsf{III}$ only. The map $\mathsf{res}^{\mathsf{II}, \mathsf{IV}}$ is evidently the right inverse of the obvious homomorphism $ \mathbf{OC}^{\mathsf{imm,\; \mathsf{I} \& \mathsf{III}}}_{\mathsf{moderate \leq 2}}(A) \to \mathbf{OC}^{\mathsf{imm}}_{\mathsf{moderate \leq 2}}(A)$.\smallskip

The treatment of $\mathbf{OC}^{\mathsf{imm,\; \mathsf{I} \& \mathsf{III}}}_{\mathsf{moderate \leq 2}}(A)$ is very similar to our treatment of $\mathbf{OC}^{\mathsf{imm}}_{\mathsf{moderate \leq 2}}(A)$, although the case of $\mathbf{OC}^{\mathsf{imm,\; \mathsf{I} \& \mathsf{III}}}_{\mathsf{moderate \leq 2}}(A)$ is easier to analyze.

\begin{theorem}\label{th.1_kernel_for_I_III} Let $A = \R \times [0,1]$ and $\mathcal R^\circ =\{\mathsf{I} \Leftrightarrow \mathsf{III}\}$. 

$\bullet$ There exists an exact sequence of abelian groups:
$$0 \to \mathbf K^{\mathsf{I}\, \& \, \mathsf{III}} \to \mathbf{OC}^{\mathsf{imm,\; \mathsf{I}\, \& \, \mathsf{III}}}_{\mathsf{moderate \leq 2}}(A) / \mathbf{OC}^{\mathsf{emb}}_{\mathsf{moderate \leq 2}}(A) \stackrel{\mathcal I \rho^{\mathsf{I}\, \& \, \mathsf{III}}}{\longrightarrow} \Z  \to 0,$$
where the epimorphism $\mathcal I \rho^{\mathsf{I}\, \& \, \mathsf{III}}$ is given by the integral invariant $\rho^{\mathsf{I}}(\sim) - \rho^{\mathsf{III}}(\sim)$. \smallskip

$\bullet$ The kernel  $\mathbf K^{\mathsf{I}\, \& \, \mathsf{III}}$ is generated by the doodles $\{\infty(n) \coprod \overline\infty\}_{n \in \Z;\, n \neq 0}$ and is isomorphic to the group $\mathbf D(\mathcal R^\circ)/ \mathbf D_0(\mathcal R^\circ) \approx (\Z)^\infty$ .  \smallskip

$\bullet$ Thus, 
 any $2$-moderate oriented doodle $\b$ with the crossings of the types $\mathsf{I}$ and $\mathsf{III}$ only and such that $\rho_{\mathsf{I}}(\b) =  \rho_{\mathsf{III}}(\b)$,  is $2$-moderately cobordant to an embedding if and only if its image $\Theta(\b) \in \mathbf D_0(\mathcal R^\circ)$ (i.e., the ``semi-norm" $m(\Sigma_\b,  \mathcal R^\circ,  {\mathcal C'}) = 0$).
\end{theorem}

\begin{proof} 
Since the oriented figure ``$\infty$" has the crossing of type $\mathsf I$, and ``$\overline\infty$" has the crossing of type $\mathsf {III}$, the map $\mathcal I \rho^{\mathsf{I}\, \& \, \mathsf{III}} = \rho_{\mathsf{I}}(\sim) - \rho_{\mathsf{III}}(\sim)$ is an epimorphism. 
\smallskip 

 By Lemma \ref{lem.1_KEY_for_DOODLES}, any $2$-moderate immersion $\b: \mathcal C \to A$ with the crossings of the types $\mathsf{I}$ and $\mathsf{III}$ only and  such that $\rho_{\mathsf{I}}(\b) =  \rho_{\mathsf{III}}(\b)$ is cobordant in $\mathbf{OC}^{\mathsf{imm,\;\mathsf{I}\, \& \, \mathsf{III}}}_{\mathsf{moderate \leq 2}}(A)$ to an  embedding $\b_1$ if and only if the singular set $\Sigma_\b$ admits a free involution $P \in \wp(\Sigma_\b, \mathcal R^\circ)$ with the following property: for any $a \in \Sigma_\b$, the algebraic intersection number $\g(a, P(a)) \circ \mathcal C' = 0$. Here the embedded doodle $ \mathcal C'$ is the doodle $ \mathcal C''$ from the proof of Theorem \ref{th.1.DOODLES}. Equivalently, $m(\Sigma_\b,  \mathcal R^\circ,  {\mathcal C'}) = 0$. 

For any triple $\{Q, P, \mathcal E\}$, representing an element of $\mathbf D(\mathcal R^\circ)$, we construct a $2$-moderate immersion $\b(\{Q, P, \mathcal E\})$ by placing a small figure $\infty$ or $\overline\infty$ at each point $q \in Q$  (depending on whether $q$ is of color $\mathsf I$ or $\mathsf{III}$) and using the embedded blob $\mathcal E$ as a part of the immersion. The role of the involution $P: Q \to Q$ is tenuous: it makes sure that the colors of $q$ and $P(q)$ are different. By Lemma \ref{lem.1_KEY_for_DOODLES}, $\b(\{Q, P, \mathcal E\})$ is cobordant to an embedding if and only if there is a triple $\{Q, \tilde P, \mathcal E\}$ for which $m(\{Q, \tilde P, \mathcal E\}) = 0$; in other words, when $\{Q, \tilde P, \mathcal E\} \in \mathbf D_0(\mathcal R^\circ)$.

Therefore, the correspondence $\{Q, P, \mathcal E\} \leadsto \b(\{Q, P, \mathcal E\})$ gives rise to a monomorphism $$\Theta:  \mathbf D(\mathcal R^\circ)/\mathbf D_0(\mathcal R^\circ)\to \mathbf K^{\mathsf{I}\, \& \, \mathsf{III}}.$$

On the other hand, we have seen that, for any $2$-moderate immersion $\b \in \mathbf K^{\mathsf{I}\, \& \, \mathsf{III}}$, the immersion $\b^\dagger$ (see Fig. \ref{fig.4.RESOLUTION_of_DOODLES}) from Lemma \ref{lem.1_KEY_for_DOODLES} is $2$-moderately cobordant to $\b$. Evidently,  $\b^\dagger$ is in the image of $\Theta$. As a result, $\Theta$ is an epimorphism, and thus, an isomorphism.
\smallskip

Exactly as in the proof of Theorem \ref{th.1.DOODLES} and with the help of Lemma \ref{lem.1_KEY_for_DOODLES}, we verify 
 that all the oriented doodles $\{\infty(n) \coprod \overline\infty\}_{n \in \Z}$ are distinct elements of infinite order in $\mathbf K^{\mathsf{I}\, \& \, \mathsf{III}}$ and generate a subgroup $(\Z)^\infty$. 
\hfill
\end{proof}


Let ``$\infty$-{\sf ribbon}" denotes the immersion of a band $S^1 \times I$ in $A$, shaped as a fat figure $\infty$ in relation to $\hat v$ (see Fig. \ref{fig.4.8888XX}, {\sf(b)}).  The symbol 
``$\infty$-{\sf ribbon}$(n)$" stands for the $\infty$-{\sf ribbon} surrounded by $n$ concentric nested disks. 
Similarly, the ``$8$-{\sf ribbon}" is the immersion of a band $S^1 \times I$ in $A$, shaped as a fat figure $8$ in relation to $\hat v$ and $8$-{\sf ribbon}$(n)$ denotes the $8$-{\sf ribbon} surrounded by $n$ concentric nested disks. \smallskip

In the case of non-oriented blobs, we get the following result:

\begin{theorem}\label{th.1_BLOBS_KERNEL} Let $A = \R \times [0, 1]$.

$\bullet$ There is an exact sequence of abelian groups: 
$$0 \to \mathbf M \to \mathbf B^{\mathsf{imm}}_{\mathsf{moderate \leq 2}}(A) \big/ \mathbf B^{\mathsf{emb}}_{\mathsf{moderate \leq 2}}(A) \stackrel{\mathcal I \rho}{\longrightarrow} \Z_2 \times \Z \to 0,$$
where $\mathbf B^{\mathsf{emb}}_{\mathsf{moderate \leq 2}}(A) \stackrel{J}{\approx} \Z$ is a direct summand of $\mathbf B^{\mathsf{imm}}_{\mathsf{moderate \leq 2}}(A)$. \smallskip

$\bullet$ The homomorphism $\mathcal I \rho$ is defined by the invariants $$\rho_{\mathbf{I}}(\sim), \rho_{\mathbf{III}}(\sim) \in  \Z_2,\;  \rho_{\mathbf{II}}(\sim) - \rho_{\mathbf{IV}}(\sim) \in  \Z$$ (which realize the subgroup $\Z_2 \times \Z \hookrightarrow \Z_2 \times \Z_2 \times \Z$).\smallskip 

$\bullet$ The kernel $\mathbf M$ contains the group $(\Z_2)^\infty$ generated by blobs $\{\infty$-{\sf ribbon}$(n) \uplus \overline\infty$-{\sf ribbon}$\}_{n \in \Z}$ ($n\neq 0$) as in Fig. \ref{fig.4.8888XX}, {\sf(b)}.\smallskip
\end{theorem}

\begin{proof} Thanks to Proposition \ref{prop.1.EXACT_SEQ_BLOBS}, to prove the  theorem we need only to show that the kernel $\mathbf M$ contains the group $(\Z_2)^\infty$. It suffices to exhibit infinitely many distinct elements of $\mathbf M$ of order $2$. 

Consider the immersion $\b_\bullet: X_\bullet \to A$ of a band $X_\bullet = S^1 \times I$, whose image is shaped as $\infty$-{\sf ribbon} (see Fig. \ref{fig.4.8888XX}, {\sf (b)}). 
With the help of $\b_\bullet$, the orientation of $X_\bullet$ (and thus of $\d X_\bullet$) is induced by the preferred orientation of $A$. The image $\b_\bullet(\d X_\bullet)$ has four self-intersections of distinct colors $\{\mathbf{I, II, III, IV}\}$ as blobs (see Fig. \ref{fig.1.9XX}) and of distinct colors $\{\mathsf{I, II, III, IV}\}$ (as doodles). Alternatively, we could use the immersion $\b_\bullet: X_\bullet = D^2 \coprod D^2 \to A$ whose image has also four self-intersections of distinct colors $\mathbf{I, II, III, IV}$  and of distinct colors $\mathsf{I, II, III, IV}$ (see Fig. \ref{fig.1.TABLE_XX}). 

We add to $\b_\bullet$ the regular embeddings of $n$ coherently oriented concentric disks so that the image of the smallest disk in $A$ contains properly the  $\infty$-ribbon $\b_\bullet(X_\bullet)$. 

We claim that $\infty$-{\sf ribbon}$(n)$ is a nontrivial element of $\mathbf B^{\mathsf{imm}}_{\mathsf{moderate \leq 2}}(A)$.  Assume to the contrary that the $\infty$-{\sf ribbon}$(n)$ is the boundary of a solid $2$-moderate cobordism $B: W \to A \times [0, 1)$. Then the self-intersection curves $\ell \subset B(\delta W)$ cannot connect the unique self-intersection point of $\b_\bullet(\d X_\bullet)$ of type $\mathbf I$ to itself. Thus, no pairing consistent with $$\mathcal R^\bullet=_{\mathsf{def}} \; \{\mathbf{I} \Leftrightarrow \mathbf{I},\,  \mathbf{III} \Leftrightarrow \mathbf{III},\, \mathbf{II} \Leftrightarrow \mathbf{IV} \}$$ is available, and no such $2$-moderate cobordism $B$ exists. 

Moreover, for $n \neq m$, $\infty$-{\sf ribbon}$(n)\neq \infty$-{\sf ribbon}$(m)$  in $\mathbf B^{\mathsf{imm}}_{\mathsf{moderate \leq 2}}(A)/ \mathbf B^{\mathsf{emb}}_{\mathsf{moderate \leq 2}}(A)$. 
Indeed, if
\begin{eqnarray}\label{eq.RIBONS_are_distinct} 
\infty\text{\sf{-ribbon}}(n)=  \infty\text{\sf{-ribbon}}(m) \uplus \a' \text{\, in \,} \mathbf B^{\mathsf{imm}}_{\mathsf{moderate \leq 2}}(A),
\end{eqnarray} 
where $\a': X' \hookrightarrow A$ is a $2$-moderate embedding whose image is separated from the rest of the blobs by a $\hat v$-trajectory,
 then the $2$-moderate solid cobordism $B: W \to A \times [0, 1)$ between the two blobs in the RHS and LHS of equality (\ref{eq.RIBONS_are_distinct}) is available. 
 \smallskip
 
Consider the self-intersection curve $\ell \subset B(\delta W)$ that connects the unique self-intersection point $a$ of type $\mathbf I$ in $\infty\text{\sf{-ribbon}}(n)$  to the unique self-intersection point $b$ of type $\mathbf I$ in $\infty\text{\sf{-ribbon}}(m)$. Since $B$ is an immersion, the preimage $B^{-1}(\ell)$ must be union of several arcs, so that  $B: B^{-1}(\ell) \to \ell$ is an immersion.  
By the construction of $\ell$, two of these arcs, $\ell_1$ and $\ell_2$,  belong to the boundary $\delta W$, while the rest of the arcs start at $n$ points in the interior of $W$ and terminate at $m$ points in the interior of $W$. For $m \neq n$, at least one of these arcs must hit $\delta W$ at some point $c$. Therefore, the point $B(c) \in \ell$ belongs to triple intersection locus of $B(\delta W)$, a contradiction with the assumption that $B: \delta W \to A \times [0, 1]$ is $2$-moderate.
 \smallskip

Note that  $\infty$-{\sf ribbon}$(n)$ does not belong to the kernel of $\mathcal I\rho$ since its $\rho_{\mathsf{I}} =1$. At the same time, all the $\mathcal I\rho$-invariants of  $\infty$-{\sf ribbon}$(n) \uplus \overline\infty$-{\sf ribbon} do vanish; thus, it belongs to the kernel $\mathcal I\rho$. 
 
We recycle the previous arguments and examine the $\mathcal R^\bullet$-permissible pairings between the four self-intersections of type $\mathbf I$ of the ribbons $\infty\text{\sf{-ribbon}}(n) \uplus \infty\text{\sf{-ribbon}}$ and $\infty\text{\sf{-ribbon}}(m) \uplus \infty\text{\sf{-ribbon}}$. These pairings are delivered by a pair of curves $\ell \subset B(\delta W)$ and $\ell' \subset B(\delta W)$ that belong to the self-intersection locus of $B(\delta W)$.  Thus, for $m \neq n$,  $B(\delta W)$ must develop triple intersections in $A \times [0,1]$. We conclude that
$$\infty\text{\sf{-ribbon}}(n) \uplus \infty\text{\sf{-ribbon}} \neq \infty\text{\sf{-ribbon}}(m) \uplus \infty\text{\sf{-ribbon}} \mod \mathbf B^{\mathsf{emb}}_{\mathsf{moderate \leq 2}}(A).$$

On the other hand, if the mirror image (with respect to a $\hat v$-trajectory) of a non-oriented blob $\a: X \to A$ is isotopic to the original blob $\a$, then $\a \uplus \a =0$ in $\mathbf B^{\mathsf{imm}}_{\mathsf{moderate \leq 2}}(A)$. Here the isotopy is assumed to preserve the oriented foliation $\mathcal F(\hat v)$. 
Indeed, in such symmetric case, $\a \uplus \a$ is the boundary of the $2$-moderate cobordism $B: W \to A \times [0, 1)$, traced by a rotation in $A \times [0, 1)$ of $\a(X) \subset \R \times [0, 0.5] \subset A$ on the angles $0 \leq \phi \leq \pi$ around the axis $\g = \R \times \{0.5\} \subset A \times \{0\} \subset A \times [0, 1)$.
Thus, 
$$\big(\infty\text{\sf{-ribbon}}(n) \uplus \infty\text{\sf{-ribbon}}\big) \uplus \big(\infty\text{\sf{-ribbon}}(n) \uplus \infty\text{\sf{-ribbon}}\big) = 0$$ in $\mathbf B^{\mathsf{imm}}_{\mathsf{moderate \leq 2}}(A)$ and hence in $\mathbf M$.
Therefore, $\{\infty\text{\sf{-ribbon}}(n) \uplus  \infty\text{\sf{-ribbon}}\}_n$ deliver distinct elements of order $2$ in $\mathbf M$. 
\smallskip


Thanks to the homomorphism $\mathcal R_J: \mathbf B^{\mathsf{imm}}_{\mathsf{moderate \leq 2}}(A) \to \mathbf B^{\mathsf{emb}}_{\mathsf{moderate \leq 2}}(A)$ from (\ref{eq.1.7_XYZ}), \hfill\break $\mathbf B^{\mathsf{emb}}_{\mathsf{moderate \leq 2}}(A) \stackrel{J}{\approx} \Z$ is a direct summand of $\mathbf B^{\mathsf{imm}}_{\mathsf{moderate \leq 2}}(A)$.
\end{proof}

\noindent {\bf Remark 3.1}
We do not know whether $\mathbf M \approx (\Z_2)^\infty$ and we do not have geometric models for all the generators of $\mathbf M$, beyond the ones that have been constructed above and the similar ones from (\ref{eq.GENERATING_blobs}), based on Fig. \ref{fig.1.TABLE_XX}. The main difficulty is that we do not have \emph{localized} resolutions of singularities, present in the boundaries of immersed blobs, into canonical singular (like a small $\infty$-{\sf ribbon}) and nonsingular parts. This prevents us from establishing an analogue of Lemma \ref{lem.1_KEY_for_DOODLES} for blobs.
\hfill $\diamondsuit$
\smallskip

 In the case of \emph{oriented} blobs, many new invariants counting self-intersections of their boundaries arise. In principle, there are $16$ ways to combine the orientation independent types $\mathbf{I - IV}$ with the orientation dependent types $\mathsf{I - IV}$. As before, the curves $\ell$ that belong to the self-intersections of the $\delta$-boundaries of solid cobordisms $W$, define $8$ parings $\pi$ from the set $\mathcal R^{\bullet\bullet}$ between these $16$ types:
 \begin{eqnarray} \label{eq.16_8_PARING}
(\mathbf I, \mathsf I) \Leftrightarrow (\mathbf I, \mathsf{III}),\; (\mathbf I, \mathsf{II}) \Leftrightarrow (\mathbf I, \mathsf{IV}),\; \; (\mathbf{II}, \mathsf I) \Leftrightarrow (\mathbf{IV}, \mathsf{III}),\; (\mathbf{II}, \mathsf{II}) \Leftrightarrow (\mathbf{IV}, \mathsf{IV}), \\
\quad \quad (\mathbf{III}, \mathsf I) \Leftrightarrow (\mathbf{III}, \mathsf{III}),\;\; (\mathbf {III}, \mathsf{II}) \Leftrightarrow (\mathbf{III}, \mathsf{IV}),\;\; (\mathbf{IV}, \mathsf I) \Leftrightarrow (\mathbf{II}, \mathsf{III}),\;\; (\mathbf {IV}, \mathsf{II}) \Leftrightarrow (\mathbf{II}, \mathsf{IV}). \nonumber
\end{eqnarray}
These pairings from $\mathcal R^{\bullet\bullet}$ follow two rules: {\sf 1)} start with a bold Roman number, indexing a sector that belongs to a pair of intersecting blobs at a point $a$ where their boundaries intersect (see Fig. \ref{fig.1.9XX}); subject the sector to a reflection with respect to the vector $\hat v(a)$, then the new sector accquires new bold Roman number; {\sf 2)} the second Roman number that indexes the orientations of two intersecting boundaries, is changed to the Roman number that indexes the two flipped orientations of the boundary curves.

Consider two subsets, $\mathbf A$ and $\mathbf B$, of the $16$ patterns, each subset containing $8$ patterns: $\mathbf A$ consists of patterns whose second index is $\mathsf I$ or  $\mathsf{II}$, while $\mathbf B$ consists of patterns whose second index is $\mathsf{III}$ or $\mathsf{IV}$. We notice that the pairings from (\ref{eq.16_8_PARING}) couple  only elements from $\mathbf A$ with elements from $\mathbf B$.

 
For a given oriented blob $\a:  X \to A$, counting the crossings of each of the $16$ types $\kappa$ from the list (\ref{eq.16_8_PARING}), produces a number $\rho_\kappa$. Each of the $8$ pairings $\pi \in \mathcal R^{\bullet\bullet}$ from the list (\ref{eq.16_8_PARING}), with the help of the self-intersection curves $\ell \subset \delta W$, generates an oriented cobordism invariant $\rho_\kappa - \rho_{\pi(\kappa)} \in \Z$.
Together, $\{\rho_\kappa - \rho_{\pi(\kappa)}\}_\kappa$ produce a homomorphism $$\mathcal I\rho^{\bullet\bullet}: \mathbf{OB}^{\mathsf{imm}}_{\mathsf{moderate \leq 2}}(A) \longrightarrow (\Z)^8.$$ 
 
 Examining the pairings $\pi$ from the list (\ref{eq.16_8_PARING}) between different types of intersection patterns, we notice that $\pi$ couples crossings of two \emph{similarly oriented} (clockwise or counterclockwise) blobs with crossings of two \emph{similarly oriented}  blobs, and couples crossings of two \emph{oppositely oriented} blobs with crossings of two \emph{oppositely oriented} blobs. Therefore, $\mathcal I\rho^{\bullet\bullet}$ is the direct product of two homomorphisms $\mathcal I\rho^\bullet$ and $\overline{\mathcal I\rho^\bullet}$. 
 
 %
 
 \begin{theorem}\label{th.1_BLOBS_KERNEL_ORIENT} For $A = \R \times [0, 1]$, there is an exact sequence of abelian groups: 
$$0 \to \mathbf{OM} \to \mathbf{OB}^{\mathsf{imm}}_{\mathsf{moderate \leq 2}}(A) \big/ \mathbf{OB}^{\mathsf{emb}}_{\mathsf{moderate \leq 2}}(A) \stackrel{\mathcal I \rho^\bullet \, \times\, \overline{\mathcal I \rho^\bullet}}{\longrightarrow} (\Z)^4 \times  (\Z)^4 \stackrel{\Sigma \times \Sigma}{\longrightarrow} \Z_2 \times \Z_2 \to 0,$$
where $\mathbf{OB}^{\mathsf{emb}}_{\mathsf{moderate \leq 2}}(A) \stackrel{\mathsf K \times \mathsf L}{\approx} \Z\times \Z$ is a direct summand of $\mathbf{OB}^{\mathsf{imm}}_{\mathsf{moderate \leq 2}}(A)$.  
The homomorphism $\Sigma$ takes each $4$-vector to the sum of its components modulo $2$. 
\smallskip

The kernel $\mathbf {OM}$ contains the subgroup $(\Z)^\infty$, generated by the blobs $\{Y_{(\bar\mu, \nu)}(n, 0)\}$, \hfill\break  $\{Y_{(\bar\mu, \nu)}(0, m)\}$  from (\ref{eq.GENERATING_blobs}) (see Fig. \ref{fig.1.TABLE_XX}) and indexed by the elements of the set 
$$\big\{m, n \in \Z;\, m, n \neq 0;\,\; (\bar\mu, \nu) \in \tilde{\mathbf A} =_{\mathsf{def}} \{\mathbf{I, II}\} \times \{\mathsf{I, II}\}\big\}.$$
\end{theorem}

\begin{proof} Consider two smooth simple loops (say, ellipcii), $\mathcal C_1$ and $\mathcal C_2$, in $A = \R \times [0,1]$ such that they intersect transversally only at a pair of points $a$ and  $b$, where the vectors $\hat v(a)$ and $\hat v(b)$ are transversal to both curves (see Fig. \ref{fig.1.TABLE_XX}). We pick any pair of indexes $(\bar\mu(a), \nu(a))$, where $\bar\mu(a)$ from the list $\{\mathbf{I, II, III, IV}\}$ and $\nu(a)$ from the list $\{\mathsf{I, II, III, IV}\}$. This choice of 
$(\bar\mu(a), \nu(a))$ determines orientations (counterclockwise or clockwise) of the curves $\mathcal C_1$ and $\mathcal C_2$ as well as one of the four possible choices of the domains $\mathcal D_1$ and $\mathcal D_2$ in $A$ that bound $\mathcal C_1$ and $\mathcal C_2$, respectivelly. Only one of the four choices will produce a pair of compact domains.  To adress this complication, we trim  the non-compact domains by bounding them with very big concentric circles, disjoint from  $\mathcal C_1 \cup \mathcal C_2$.

With these choices in place, the intersection point $b$ acquired a unique  type $(\bar\mu(b), \nu(b)) \in \{\mathbf{I, II, III, IV}\} \times \{\mathsf{I, II, III, IV}\}$. Thus, with the help of oriented $\mathcal D_1$ and $\mathcal D_2$, we get a permutation map $\Pi$ of the set $\{\mathbf{I, II, III, IV}\} \times \{\mathsf{I, II, III, IV}\}$ to itself. 

\begin{figure}[ht]
\centerline{\includegraphics[height=4in,width=4in]{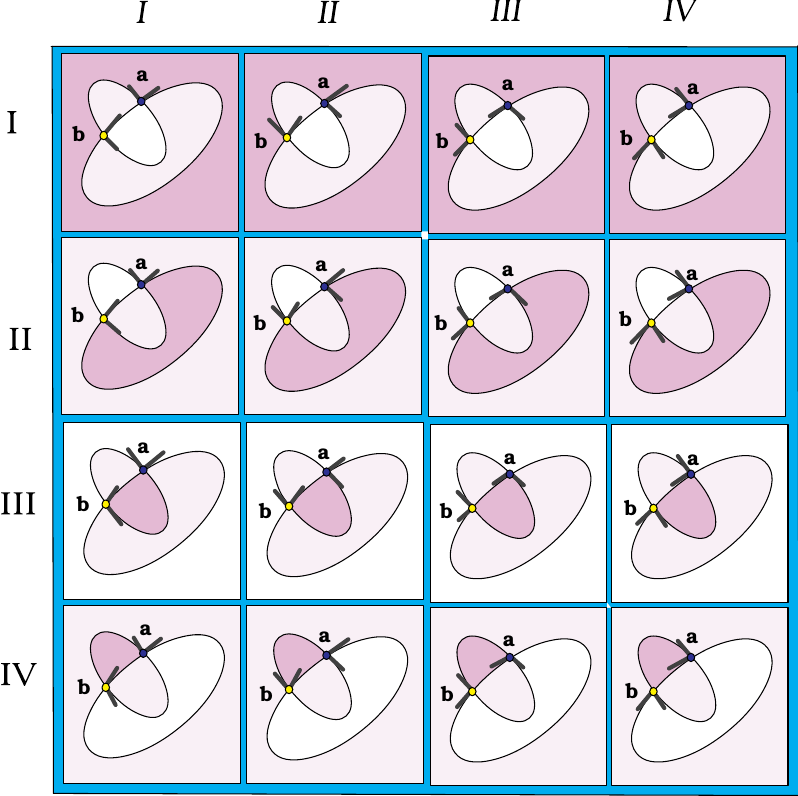}}
\bigskip
\caption{\small{The table lists $16$ possible types at the crossing $a$. Vertically are displayed blobs' types {\bf I, II, III, IV}, horizontally doodles' types {\sf I, II, III, IV}. The vector field $\hat v$ is vertical. The corresponding types at the second crossing $b$ define the pairing $\Pi$. The two bars at each crossing show the orientations of the intersecting curves. The shadings indicate the two choices of interior or exterior for each ellipse.   Any non-compact domain is assumed to be bounded by a big circle, not shown.}}
\label{fig.1.TABLE_XX}
\end{figure}

By a direct inspection, we see that $\Pi$ preserves the two subsets $\mathbf A = \{\mathbf{I, II, III, IV}\} \times \{\mathsf{I, II}\}$ and $\mathbf B = \{\mathbf{I, II, III, IV}\} \times \{\mathsf{III, IV}\}$. Moreover, $\Pi$ is a \emph{free} involution on $\mathbf A$ and on $\mathbf B$. In particular, the involution $\Pi$ generates $4$ pairs-orbits in $\mathbf A$ and $4$ pairs-orbits in $\mathbf B$.

In contrast, the involutions $$\pi:  \{\mathbf{I, II, III, IV}\} \times \{\mathsf{I, II, III, IV}\} \to  \{\mathbf{I, II, III, IV}\} \times \{\mathsf{I, II, III, IV}\}$$ 
from the list $\mathcal R^{\bullet\bullet}$ pair elements of  $\mathbf A$ to elements of $\mathbf B$.  As a result, the invariants \hfill\break $\{\rho_\kappa - \rho_{\pi(\kappa)}\}_{\kappa \in \mathbf A}$ do not vanish for the blobs $\mathcal D_1$ and $\mathcal D_2$ whose crossings are indexed by $(\bar\mu(a),\, \nu(a))$ and $(\bar\mu(b),\, \nu(b))$, both from $\mathbf A$ or $\mathbf B$. Moreover,
$\{\rho_{(\bar\mu(a),\, \nu(a))} = 1 = \rho_{(\bar\mu(b),\, \nu(b))} \}$ for any of the $8$ choices of $(\bar\mu(a),\, \nu(a)) \in \mathbf A$ and for any of the $8$ choices of $(\bar\mu(a),\, \nu(a)) \in \mathbf B$. Therefore, each choice of $(\bar\mu(a),\, \nu(a)) \in \mathbf A$ or $(\bar\mu(a),\, \nu(a)) \in \mathbf B$ produces a vector $\vec z_{(\bar\mu(a),\, \nu(a))} \in (\Z)^{16}$ with exactly two components equal to  $1$ and the rest of components equal to $0$. The pairs of $1$'s in each vector are indexed by either elements of $\mathbf A$ or by elements of $\mathbf B$.  Again, by a direct inspection, the $8\times 16$ matrix $T$, formed by these vectors is of the rank $8$. It consists of four $4\times 8$ blocks $T_{11}, T_{12}, T_{21}, T_{22}$, where $T_{11} = T_{22}$ and $ T_{12} = \mathbf 0 = T_{21}$. In turn, $T_{11}$ consists of two $4\times 4$ blocks, the first of which has $1$'s along the diagonal, and the second one has $1$'s along the anti-diagonal; the rest of entries are zeros. 

Therefore, the image of $\mathbf{OB}^{\mathsf{imm}}_{\mathsf{moderate \leq 2}}(A) \big/ \mathbf{OB}^{\mathsf{emb}}_{\mathsf{moderate \leq 2}}(A) \stackrel{\mathcal I \rho^\bullet \, \times\, \overline{\mathcal I \rho^\bullet}}{\longrightarrow} (\Z)^4 \times  (\Z)^4$ contains the lattice spanned by the $8$ vectors $\{\vec z_{(\bar\mu(a),\, \nu(a))}\}_{(\bar\mu(a),\, \nu(a)) \in \mathbf A \cup \mathbf B}$. 

On the other hand, by Lemma \ref{lem.ODD_self-intersection}, no vector in $(\Z)^4$ whose sum of components is odd (in particular, with a single component $1$) can be realized by an immersion of a compact surface $\a: X \to A$, since the total number of crossings $\sum_\kappa (\rho_\kappa + \rho_{\pi(\kappa)})$  for $\a(\d X)$ must be even, and $\sum_\kappa (\rho_\kappa - \rho_{\pi(\kappa)}) \equiv \sum_\kappa (\rho_\kappa + \rho_{\pi(\kappa)}) \mod 2$.

Thus, the image of $\mathcal I \rho^\bullet \, \times\, \overline{\mathcal I \rho^\bullet}$ is the kernel of the homomorphism $(\Z)^4 \times  (\Z)^4 \stackrel{\Sigma \times \Sigma}{\longrightarrow} \Z_2 \times \Z_2$. \smallskip

For any element $(\bar\mu, \nu) \in \mathbf A$, consider the two blobs $X_{(\bar\mu, \nu)}$ in Fig. \ref{fig.1.TABLE_XX} for which the crossing at $a$ has the type $(\bar\mu, \nu)$. Then the crossing at $b$ has the type $\Pi(\bar\mu, \nu) \in \mathbf A$. 

There are unique $\pi, \pi' \in \mathcal R^{\bullet \bullet}$ (see (\ref{eq.16_8_PARING})) such that  $\pi(\bar\mu, \nu) \in \mathbf B$ and $\pi'(\Pi(\bar\mu, \nu)) \in \mathbf B$. Moreover,  $\pi'(\Pi(\bar\mu, \nu)) = \Pi(\pi(\bar\mu, \nu))$.

We denote by $X_{(\bar\mu, \nu)}(n)$ the blob $X_{(\bar\mu, \nu)}$, surrounded by $n$ big concentric coherently oriented discs. Then the blob 
\begin{eqnarray}\label{eq.GENERATING_blobs}
Y_{(\bar\mu, \nu)}(n, m) =_{\mathsf{def}} X_{(\bar\mu, \nu)}(n) \uplus X_{\pi(\bar\mu, \nu)}(m)
\end{eqnarray}
 is in the kernel of $\mathcal I \rho^\bullet \, \times\, \overline{\mathcal I \rho^\bullet}$. 
Moreover, for $m \neq n$, $Y_{(\bar\mu, \nu)}(n, m)$ is an element of infinite order in $\mathbf{OM}$. The validation of this claim is similar to the one used in the proof of Theorem \ref{th.1_BLOBS_KERNEL}. First, we show that $Y_{(\bar\mu, \nu)}(n, m)$ is non-trivial element in $\mathbf{OM}$. 
Assume to the contrary that there is a solid cobordism $B: W \to A \times [0, 1)$ that bounds $Y_{(\bar\mu, \nu)}(n, m)$. Consider the self-intersection curve $\ell \subset B(\delta W)$ that connects the unique self-intersection point $a$ of type $(\bar\mu, \nu) \in X_{(\bar\mu, \nu)}(n)$ to the unique self-intersection point $a'$ of type $\pi(\bar\mu, \nu)$ in $X_{\pi(\bar\mu, \nu)}(m)$. Since $B$ is an immersion, the preimage $B^{-1}(\ell)$ must be union of several arcs. By the construction of $\ell$, two of these arcs, $\ell_1$ and $\ell_2$,  belong to the boundary $\delta W$, while the rest of the arcs start at $n$ points in the interior of $W$ and terminate at $m$ points in the interior of $W$. For $m \neq n$, at least one of these arcs must hit $\delta W$ at some point $c$. Therefore, the point $B(c) \in \ell$ belongs to triple intersection locus of $B(\delta W)$, a contradiction with the assumption that $B: \delta W \to A \times [0, 1]$ is $2$-moderate.  The demonstration that $Y_{(\bar\mu, \nu)}(n, m)$ is an element of infinite order is similar. 
\hfill
\end{proof}

\begin{conjecture}\label{conj.1.3} For $A = \R \times [0, 1]$, consider the exact sequence  
of abelian groups: 
$$0 \to \mathbf{OM}  \to  \mathbf{OB}^{\mathsf{imm}}_{\mathsf{moderate \leq 2}}(A)/(\Z \times \Z)  \stackrel{\mathcal I \rho^\bullet \, \times\, \overline{\mathcal I \rho^\bullet}}{\longrightarrow} (\Z)^4 \times  (\Z)^4  \stackrel{\Sigma \times \Sigma}{\longrightarrow}  \Z_2 \times \Z_2 \to 0 \quad$$
from Theorem \ref{th.1_BLOBS_KERNEL_ORIENT}. The kernel $\mathbf{OM}$ is $\Z$-generated by the blobs  $Y_{(\bar\mu, \nu)}(n, m)$ from (\ref{eq.GENERATING_blobs}).

For the set of rules $\mathcal R^{\bullet\bullet}$, 
there is an isomorphism 
$$\Theta^{\bullet\bullet}:\, \mathbf D(\mathcal R^{\bullet\bullet})/\mathbf D_0(\mathcal R^{\bullet\bullet}) \hookrightarrow \mathbf{OM}  . $$ 
With the help of  $\Theta^{\bullet\bullet}$, the elements of $\mathbf{OM}$ acquire ``norms" $m(Q,  \mathcal R^{\bullet\bullet}, \mathcal E) \in \Z_+$ as in (\ref{eq.1_m-NORM}).

\hfill $\diamondsuit$
\end{conjecture}

To formulate the next conjecture, we introduce very informally a new topological space $\mathcal G_{\mathsf{moderate \leq 2}}$. Its points are the $2$-moderate functions $f \in \mathcal F_{\mathsf{moderate \leq 2}}$ together with a ``{\sf coupling}" $\tau$ of their zeros.  This $\tau$ may ``link" either two simple zeros of $f$, or a double zero to itself, or a double zero to a pair of distinct simple zeros.  One needs to describe also the deformation rules for $\tau$, as a function $f$ deforms within the space $\mathcal F_{\mathsf{moderate \leq 2}}$. 

The space $\mathcal G_{\mathsf{moderate \leq 2}}$ admits a slightly different interpretation. 
Let $\mathcal Q_{\mathsf{moderate \leq 2}} \subset \mathcal F_{\mathsf{moderate \leq 2}}$ be the subspace of functions with either two simple zeros, or a single double zero, or with no zeros at all. Then any $f \in  \mathcal F_{\mathsf{moderate \leq 2}}$ is a finite product $\prod_i f_i$, where $f_i \in \mathcal Q_{\mathsf{moderate \leq 2}}$ and the order of the multipliers is unimportant. Of course, the presentation $f = \prod_i f_i$ is far from being unique. For example, we always can replace $\prod_i f_i$ by a product $\prod_i h_i f_i$, where $\{h_i\}_i$ have no zeros and $\prod_i h_i   = 1$.  

We would like to think of points of the hypothetical space $\mathcal G_{\mathsf{moderate \leq 2}}$ as the presentations of functions $f \in  \mathcal F_{\mathsf{moderate \leq 2}}$ as such unordered products, being considered up to the equivalence $\prod_i f_i \sim \prod_i h_i f_i$, where the positive functions $\{h_i\}$ are such that $\prod_i h_i   = 1$.

\begin{conjecture} Let $A = \R \times [0,1]$.
There exists a topological space $\mathcal G_{\mathsf{moderate \leq 2}}$ (whose construction is sketched above) and a surjective map $q: \mathcal G_{\mathsf{moderate \leq 2}} \to \mathcal F_{\mathsf{moderate \leq 2}}$, given by $q(f, \tau) = f$, with finite fibers and such that the fundamental group $$\pi_1(\mathcal G_{\mathsf{moderate \leq 2}}, pt) \approx  \mathbf B^{\mathsf{imm}}_{\mathsf{moderate \leq 2}}(A).$$ The isomorphism is delivered by an analogue $Q^{\mathsf{imm}}$ of the map $J^{\mathsf{imm}}$ from Theorem \ref{th.1.7} so that $q \circ Q^{\mathsf{imm}} = J^{\mathsf{imm}}$.
\hfill $\diamondsuit$
\end{conjecture}

 {\it Acknowledgments:} \quad The author is grateful to the referee and to the editor whose suggestions helped to improve the quality of this text and to sharpen the results.


\begin{thebibliography}{30}

\bibitem [Ar]{Ar} Arnold, V.I., {\it Topological Invariants of Plane Curves and Caustics}, University Lecture Series 5, Publications AMS, 1994. 

\bibitem [Ar1]{Ar1} Arnold V.I.,  {\it Spaces of Functions with Moderate Singularities}, Func. Analysis \& Its Applications, 23(3), 1-10 (1989) (Russ).

\bibitem [Ar2]{Ar2}  Arnold, V.I., {\it Springer Numbers and Morsification Spaces}, J. Algebraic Geom., 1 (2) (1992), 197-214. 

\bibitem [BFKK]{BFKK} Bartholomew, A., Fenn, R., Kamada, N., Kamada, S., {\it Doodles on Surfaces}, J. Knot Theory Ramifications, 27 (2018), no. 12, 1850071, 26 pages, MR 3876348.

\bibitem [CKS]{CKS} Carter, S.J., Kamada, S., Saito, M., {\it Doodles on Surfaces}, J. Knot Theory Ramifications, 11 (2002), no. 3, 311-322.

\bibitem [Got]{Got} Gottlieb, D.H., {\it On the Index of Pullback Vector Fields}, Differential Topology Proceedings, Lecture Notes in Mathematics 1350, 167-170 (1988).

\bibitem [Gu]{Gu} Guth, L., {\it Minimal Number of Self-intersections of the Boundary of an Immersed Surface in the Plane}, arXiv:0903.3112v1 [math.DG] 18 Mar 2009. 

\bibitem [H]{H} Hopf, H., {\it Vectorfelder in n-dimensionales Mannigfaltigkeiten}, Math. Annalen 96 (1937), 225-250.



\bibitem [K1]{K1} Katz, G., {\it Stratified  Convexity \& Concavity of Gradient Flows on Manifolds with Boundary}, Applied Mathematics, 2014, vol. 5, 2823-2848. \underline{http://www.scirp.org/journal/am}

\bibitem[K2]{K2} Katz, G., {\it  Traversally Generic \& Versal Flows: Semi-algebraic Models of Tangency to the Boundary}, Asian J. of Math., vol. 21, No. 1 (2017), 127-168 (arXiv:1407.1345v1 [mathGT] 4 July, 2014)).

\bibitem[K3]{K3} Katz, G., {\it The Stratified Spaces of Real Polynomials \& Trajectory Spaces of Traversing Flows}, JP J. of Geometry and Topology, v. 19, No. 2 (2016), 95-160 (arXiv:1407.2984v3 [mathGT] 6 Aug 2014).

\bibitem [K4]{K4} Katz G., {\it Flows in Flatland: A Romance of Few Dimensions}, Arnold Math. J., DOI 10.1007/s40598-016-0059-1, Springer (2016).



\bibitem [K5]{K5} Katz, G., {\it Morse Theory of Gradient Flows, Concavity and Complexity on Manifolds with Boundary}, World Scientific (2019), New Jersey-London-Singapore-Beijing-Shanghai-Hong Kong-Taipei-Chennai-Tokyo, ISBN 978-981-4368-75-9.





\bibitem [K6]{K6} Katz, G., {\it Spaces of  polynomials with constrained divisors as Grassmanians for immersions and embeddings}, arXiv: 2201.02744v1 [math.GT] 8 Jan 2022.

\bibitem [K7]{K7} Katz, G., {\it Spaces of  polynomials with constrained divisors as Grassmanians for traversing flows}, JP Journal of Geometry and Topology, Vol. 29, No. 1, 2023, Pages 47-120, (arXiv: 2202.00862v2 [math.GT] 18 Apr 2022).



\bibitem [KSW1]{KSW1} Katz, G., Shapiro B., Welker, V.,
  {\it Real polynomials with constrained real divisors. I. Fundamental Groups}, arXiv:1908.07941v3 [math.AT] 18 Sep 2020.

\bibitem [KSW2]{KSW2} Katz, G., Shapiro B., Welker, V.,
  {\it Spaces of  polynomials and bilinear forms with constrained real divisors, II.  (Co)homology \& stabilization}, arXiv:2112.15205v1 [math. AT] 30 Dec 2021.
  
\bibitem [Mo]{Mo} Morse, M. {\it Singular points of vector fields under  general boundary conditions}, Amer. J. Math. 51 (1929), 165-178.
  

\bibitem [Va]{Va} Vasiliev, V.~A., {\it Complements of Discriminants of Smooth Maps: Topology and Applications}, Translations of Mathematical Monographs, vol. 98, American Math. Society publication, 1994.

\bibitem [Va1]{Va1} Vassiliev, V.~A.,{\it Topology of spaces of functions without complicated singularities},  Funct. Anal. and its Appl., 1989, 23:4, p. 24-36.

\bibitem [Wh]{Wh} Whitney, H., {\it On Regular closed Curves in the Plane}, Comp. Math. 4 (1937), 276-284.

\end{thebibliography}
\end{document}